\newtheorem{thm}{\sc Theorem.}[section]
\newtheorem{lem}[thm]{\sc Lemma.}
\newtheorem{rem}[thm]{\sc Remark.}
\newtheorem{ass}[thm]{\sc Assumption.}
\renewcommand{\theequation}{\arabic{section}.\arabic{equation}}
\newenvironment{AMS}%
{{\upshape\bfseries AMS subject classifications. }\ignorespaces}{}
\newenvironment{keywords}{{\upshape\bfseries Key words. }\ignorespaces}{}
\newcommand{\bRplus}{{\mathbb R}_{>0}}
\newcommand{\bRgeq}{{\mathbb R}_{\geq 0}}
\newcommand{\bR}{{\mathbb R}}
\newcommand{\bZ}{\mathbb{Z}}
\newcommand{\spa}{\operatorname{span}}
\newcommand{\doctorkappa}{\mathfrak{K}}
\newcommand{\doctorZ}{\mathfrak{Z}}
\newcommand{\dH}[1]{\;{\rm d}{\mathcal{H}}^{#1}} 
\newcommand{\drho}{\;{\rm d}\rho}
\newcommand{\spont}{{\overline\varkappa}}
\newcommand{\Vhi}{\underline{V}^h_i}
\newcommand{\Whi}{W^h_i}
\newcommand{\Vyczcoi}{\mathbb{Y}_i}
\newcommand{\Vycztest}{\mathbb{Y}_{C^0}}
\newcommand{\Vycotest}{\mathbb{Y}_{C^1}}
\newcommand{\Vhycztest}{\mathbb{Y}_{C^0}^h}
\newcommand{\Vhycotest}{\mathbb{Y}_{C^1}^h}
\newcommand{\nabS}{\nabla_{\!\mathcal{S}}}
\newcommand{\nabSi}{\nabla_{\!\mathcal{S}_i}}
\newcommand{\Id}{\rm Id}
\newcommand{\id}{\rm id}
\newcommand{\deldel}[1]{\frac{\delta}{{\delta}#1}}
\newcommand{\dd}[1]{\frac{\rm d}{{\rm d}#1}}
\newcommand{\ddt}{\dd{t}}
\newcommand{\xspace}{\mathbb {X}}
\newcommand{\xspaceh}{\mathbb {X}^h}
\newcommand{\yspace}{\mathbb {Y}}
\newcommand{\unitn}{\vec{\rm n}}
\newcommand{\ek}{e}
\newcommand{\BGNpwf}{\mathcal{P}}
\newcommand{\Gauss}{{\mathcal{K}}}
\newcommand{\ttau}{\Delta t}
\def\epsilon{\varepsilon} 
\newcommand{\mat}[1]{\underline{\underline{#1}}\rule{0pt}{0pt}}
\newcommand{\errorXx}{\|\Gamma - \Gamma^h\|_{L^\infty}}
\begin{document}
\title{
Structure preserving discretisations of gradient flows for axisymmetric 
two-phase biomembranes
}
\author{
        Harald Garcke\footnotemark[2]\ \and 
        Robert N\"urnberg\footnotemark[3]}

\renewcommand{\thefootnote}{\fnsymbol{footnote}}
\footnotetext[2]{Fakult{\"a}t f{\"u}r Mathematik, Universit{\"a}t Regensburg, 
93040 Regensburg, Germany}
\footnotetext[3]{Department of Mathematics, 
Imperial College London, London, SW7 2AZ, UK}

\date{}

\maketitle

\begin{abstract}
The form and evolution of multi-phase biomembranes is of fundamental
importance in order to understand living systems. In order to describe these
membranes, we consider a mathematical model based on a
Canham--Helfrich--Evans two-phase elastic energy, which will lead to
fourth order geometric evolution problems involving highly nonlinear
boundary conditions. We develop a parametric finite element method in
an axisymmetric setting. Using a variational approach, it is possible
to derive weak formulations for the highly nonlinear boundary value
problems such that energy decay laws, as well as conservation
properties, hold for spatially discretised problems. We will prove
these properties and show that the fully discretised schemes are
well-posed. Finally, several numerical computations demonstrate that
the numerical method can be used to compute complex, experimentally
observed two-phase biomembranes. 
\end{abstract} 

\begin{keywords} 
biomembranes, multi-phase Canham--Helfrich--Evans energy, Willmore flow, 
parametric finite elements, stability, numerical simulations
\end{keywords}

\begin{AMS} 
65M60, 65M12, 35K55, 53C44
\end{AMS}
\renewcommand{\thefootnote}{\arabic{footnote}}

\begin{center}
\textbf{Dedicated to the memory of John W. Barrett}
\end{center}

\setcounter{equation}{0}
\section{Introduction}

Biomembranes and vesicles formed by lipid bilayers play a fundamental role
in many living systems, and synthesised artificial vesicles are used
in pharmaceutical applications as potential drug carriers. The basic structure
of such membranes is a bilayer consisting of phospholipids. As the thickness of
these membranes is small, the membrane is typically described as a 
hypersurface.
It is well-known that the energy of these membranes can be modelled with the
help of a curvature elasticity theory, 
see \cite{Canham70,Helfrich73,Evans74,Seifert97}.
Curvature terms in the energy account for bending stresses, but biomembranes 
have no or little lateral shear stresses, which hence are neglected in these
elasticity models.
 
Often micro-domains (or rafts) are formed due to the
clustering of certain molecules within the membrane. This leads to multi-phase
membranes with coexisting phases. It is observed that the membrane can have a
preferred curvature stemming, for example, from an asymmetry within the 
bilayer.
This so-called spontaneous curvature can depend on the phase. Moreover,
the bending rigidities appearing in the energy are typically also 
phase-dependent.
The simplest curvature energies involve the mean curvature, but neglect the
Gaussian curvature. For homogeneous biomembranes this is justified with the 
help of topological arguments, as long as the Gaussian bending
rigidity is constant, and as long as the topology of the membrane does not
change. However, for multi-phase membranes the Gaussian bending rigidity is
phase-dependent, and will thus influence membrane shapes. 
A combination of the above phase-dependent properties can lead to a multitude 
of different phenomena, including budding, fingering and fusion, see 
\cite{BaumgartHW03}.
In this paper, we consider a geometrical evolution law of gradient flow type 
for two-phase biomembranes that decreases the governing energy.
The energy we consider takes elastic energy as well as line energy into 
account. Where appropriate, the evolution
will conserve volume enclosed by the membrane, 
as well as the areas of the appearing phases.
We will derive a stable numerical method in an axisymmetric setting that is
structure preserving, in the sense that a semidiscrete variant decreases energy
and, when applicable, also conserves volume and areas exactly.
Axisymmetric formulations numerically have the advantage that they are
extremely efficient, and hence they allow for a more detailed resolution of
the shapes, in particular close to budding, for example.

Based on the fundamental work of \cite{JulicherL93,JulicherL96} we now
introduce a generalised Canham--Helfrich--Evans energy for a two-phase
biomembrane. The energy is defined for a two-phase surface 
$\mathcal{S} = (\mathcal{S}_1,\mathcal{S}_2)$,
consisting of two sufficiently smooth surfaces ${\mathcal{S}_i}$, $i=1,2$, 
in $\bR^3$, which have a common boundary $\gamma$ that is assumed to be a 
sufficiently smooth curve. 
In addition, it is assumed that $\mathcal{S}$ encloses a volume
$\Omega(\mathcal{S})$. 
The energy proposed by \cite{JulicherL93,JulicherL96} takes curvature effects,
as well as line energy effects, into account, and is given by
\begin{equation} \label{eq:E}
E(\mathcal{S}) = \sum_{i=1}^2 \left[ \tfrac12\,\alpha_i\,
\int_{\mathcal{S}_i} (k_{m,i} - \spont_i)^2 \dH{2} 
+ \alpha^G_i\,\int_{\mathcal{S}_i} k_{g,i} \dH{2} \right]
+ \varsigma\,\mathcal{H}^1(\gamma) \,.
\end{equation}
Here the constants $\alpha_i \in \bRplus$ 
and $\alpha^G_i \in \bR$ are the mean and the Gaussian bending rigidities
of the two phases,
and the constants $\spont_i \in \bR$ are the spontaneous curvatures. Note that
all these quantities might attain different values in the two phases. 
Moreover, $k_{m,i}$ and $k_{g,i}$ denote the mean and the Gaussian curvature of
$\mathcal{S}_i$, $i=1,2$, and $\varsigma$ is the energy density of the 
interface, often called line tension.
Finally, $\mathcal{H}^2$ and $\mathcal{H}^1$ are the surface and length
measures in $\bR^3$.

For the attachment conditions on $\gamma$
two cases have been considered in the literature, see
\cite{JulicherL96,Helmers11}:
\begin{subequations}
\begin{align}
\text{$C^0$--case :} & 
\quad \gamma = \partial\mathcal{S}_1 = \partial\mathcal{S}_2\,,\label{eq:C0}\\ 
\text{$C^1$--case :} & 
\quad \gamma = \partial\mathcal{S}_1 = \partial\mathcal{S}_2 
\quad \text{ and }\quad
\unitn_{\mathcal{S}_1} = \unitn_{\mathcal{S}_2} \quad\text{on } \gamma\,, 
\label{eq:C1}
\end{align}
\end{subequations}
where $\unitn_{\mathcal{S}_i}$ denotes the outer unit normal of
$\mathcal{S}_i$.
Of course, in the case (\ref{eq:C1}) it also holds that 
$\vec\mu_{\partial\mathcal{S}_1} =
-\vec\mu_{\partial\mathcal{S}_2}$, 
where $\vec\mu_{\partial\mathcal{S}_i}$ denotes the outer unit conormal to
$\mathcal{S}_i$ on $\gamma$.

It is discussed in \cite{pwfc0c1} that the contributions
\begin{equation*} 
\sum_{i=1}^2\left[\tfrac12\,\alpha_i\, 
\int_{\mathcal{S}_i} k_{m,i}^2 \dH{2} + 
\alpha^G_i\,\int_{\mathcal{S}_i} k_{g,i} \dH{2}\right]
\end{equation*}
to the energy (\ref{eq:E}) are nonnegative if 
\begin{equation*} 
\alpha^G_i \in [-2\,\alpha_i,0]\,,\ i=1,2\,. 
\end{equation*}
In the $C^1$--case, recall (\ref{eq:C1}), however, 
one can use the Gauss--Bonnet theorem, see (\ref{eq:GB}) below, to show 
that the energy (\ref{eq:E}), when restricted to a fixed topology,
can be bounded from below if
$\alpha^G_i \geq \max\{\alpha^G_1,\alpha^G_2\}-2\,\alpha_i$ for
$i=1,2$, which will hold whenever
\begin{equation} \label{eq:alphaGbound}
\min\{\alpha_1,\alpha_2\} \geq \tfrac12\,|\alpha^G_1 - \alpha^G_2|\,,
\end{equation}
see \cite{Nitsche93,pwfc0c1} for details.

It is crucial for a numerical treatment that the Gaussian curvature
term can be computed efficiently in the discrete setting.
In this context, a reformulation of the energy using the Gauss--Bonnet theorem 
is important.
In fact, the Gauss--Bonnet theorem yields
\begin{equation} \label{eq:GB}
\int_{\mathcal{S}_i} k_{g,i} \dH{2} = 2\,\pi\,m(\mathcal{S}_i) 
+ \int_{\partial\mathcal{S}_i} k_{\partial \mathcal{S}_i,\mu} \dH{1} \,,
\end{equation}
where $m(\mathcal{S}_i) \in \bZ$ denotes the Euler characteristic of 
$\mathcal{S}_i$ and $k_{\partial \mathcal{S}_i,\mu}$ 
is the geodesic curvature of $\partial\mathcal{S}_i$. 
Using this equality for the integrated Gaussian curvature, we can rewrite
the energy  (\ref{eq:E}) as
\begin{equation}\label{eq:EE}
E(\mathcal{S}) = \sum_{i=1}^2 \left[
\tfrac12\,\alpha_i\,\int_{\mathcal{S}_i} (k_{m,i} - \spont_i)^2 \dH{2}
 + \alpha^G_i \left[\int_{\gamma} 
k_{\partial\mathcal{S}_i,\mu} \dH{1} + 
2\,\pi\,m(\mathcal{S}_i) \right] \right]
+ \varsigma\, \mathcal{H}^1(\gamma) \,.
\end{equation}

We now need to compute the geodesic curvatures
$k_{\partial \mathcal{S}_i,\mu}$.
In order to do so, we first define 
the conormal, $\vec\mu_{\partial\mathcal{S}_i}$, 
to $\mathcal{S}_i$ on $\gamma$ to be 
\begin{equation} \label{eq:mupS}
\vec\mu_{\partial\mathcal{S}_i}= \pm\, \unitn_{\mathcal{S}_i} \times \vec\id_s
\quad \text{on } \gamma\,,\ i = 1,2\,,
\end{equation}
where $\vec\id$ denotes the identity in $\bR^3$
and $s$ denotes arclength on the curve $\gamma \subset \bR^3$, and the
sign in (\ref{eq:mupS}) is chosen so that $\vec\mu_{\partial\mathcal{S}_i}$
points out of $\mathcal{S}_i$, $i=1,2$.
It holds that
\begin{equation} \label{eq:idss}
\vec\id_{ss} = \vec k_{\gamma} =
k_{\partial \mathcal{S}_i,\rm n}\,\unitn_{\mathcal{S}_i} +
k_{\partial \mathcal{S}_i,\mu}\,\vec\mu_{\partial\mathcal{S}_i} 
\quad \text{on } \gamma\,,\ i = 1,2\,,
\end{equation}
where $\vec k_{\gamma}$ is the curvature vector on
$\gamma$, and where $k_{\partial \mathcal{S}_i,\rm n}$ 
is the normal curvature and $k_{\partial \mathcal{S}_i,\mu}$ 
is the geodesic curvature of $\partial\mathcal{S}_i$, $i=1,2$.

In applications for biomembranes, cf.\ \cite{JulicherL96,Tu13}, 
the surface areas of $\mathcal{S}_1$ and $\mathcal{S}_2$
need to stay constant during the evolution, as well as the volume of
the set $\Omega(\mathcal{S})$  enclosed by
$\mathcal{S}$. In this case one can consider the energy
\begin{equation}
E_\lambda (\mathcal{S}) =
E (\mathcal{S}) + \lambda_V\,\mathcal{L}^3(\Omega(\mathcal{S}))
+ \sum_{i=1}^2 \lambda_{A,i}\,\mathcal{H}^{2} (\mathcal{S}_i) \,,
\label{eq:areaE}
\end{equation}
where $\mathcal{L}^3$ denotes the Lebesgue measure in $\mathbb{R}^3$.
Here $\lambda_{A,i}$ are Lagrange multipliers for the
area constraints, which can be interpreted as a surface tension,
and $\lambda_V$ is a Lagrange multiplier for the volume constraint,
which might be interpreted as a pressure difference.

We now introduce the governing evolution equations that we consider in this
paper. We will consider the $L^2$--gradient flow of the energy $E_\lambda$,
leading to a time-dependent family of surfaces
$\mathcal{S}(t)$ and time-dependent Lagrange multipliers $\lambda_V(t)$ and
$\lambda_{A,i}(t)$, $i=1,2$. This
will lead to an equation for the normal velocity of the surfaces
$\mathcal{S}_i$, $i=1,2$, as well as to equations on the curve $\gamma$. 
The reformulation (\ref{eq:EE}) of the energy shows that a variation of
the energy, which only affects points away from $\gamma$, will not change
the Gaussian curvature part of the energy. This is reflected by the fact that,
in the gradient flow formulation, the normal velocities
$\mathcal{V}_{\mathcal{S}_i}$ on the surfaces $\mathcal{S}_i$, $i=1,2$,
do not contain terms stemming from the Gaussian curvature contribution
to the energy.  
In fact, we have from \citet[(2.16)]{pwfc0c1} that
\begin{align}
\mathcal{V}_{\mathcal{S}_i} & 
= - \alpha_i\,\Delta_{\mathcal{S}_i}\,k_{m,i} +
\tfrac12\,\alpha_i\,(k_{m,i} - \spont_i)^2 \,k_{m,i} -
\alpha_i\,(k_{m,i} - \spont_i)\,|\nabSi\,\unitn_{\mathcal{S}_i}|^2 +
\lambda_{A,i}\,k_{m,i} - \lambda_V \nonumber \\ &
= -\alpha_i\,\Delta_{\mathcal{S}_i} \,k_{m,i}
+ 2\, \alpha_i\,(k_{m,i}- \spont_i) \, k_{g,i}
-\left[\tfrac12\,\alpha_i\,(k_{m,i}^2 - \spont_i^2) -\lambda_{A,i} \right]
k_{m,i} - \lambda_V \nonumber \\ & \hspace{9cm}
\quad\text{on } \mathcal{S}_i(t)\,,\ i = 1,2\,,
\label{eq:gradflowlambda}
\end{align}
where $\Delta_{\mathcal{S}_i}$ and $\nabSi$ denote the surface Laplacian
and surface gradient on $\mathcal{S}_i$, respectively, and where 
we have observed that
$|\nabSi\,\unitn_{\mathcal{S}_i}|^2 = k_{m,i}^2 - 2\,k_{g,i}$,
see e.g.\ \citet[Lemma~12(iv)]{bgnreview}.

However, the Gaussian curvature energy contributions have an effect
on the boundary. In the $C^0$--junction case, for $t\in [0,T]$, 
the boundary conditions on $\gamma(t)$ are given by
\begin{subequations} \label{eq:C0bc}
\begin{align}
& \alpha_i\,(k_{m,i} - \spont_i) + \alpha^G_i\,
\vec k_{\gamma}\,.\,\unitn_{\mathcal{S}_i} = 0\,,\ i = 1,2\,,
\\
& \sum_{i=1}^2 \left[
 (\alpha_i\,(\nabSi\,k_{m,i})\,.\,\vec\mu_{\partial \mathcal{S}_i} -
 \alpha^G_i\,(\mathfrak{t}_i)_s)\, \unitn_{\mathcal{S}_i}
-(\tfrac12\,\alpha_i\,(k_{m,i} - \spont_i)^2
+ \alpha^G_i\,k_{g,i} + \lambda_{A,i})\,\vec\mu_{\partial\mathcal{S}_i} \right] 
\nonumber \\ & \qquad
+  \varsigma\, \vec k_\gamma  = \vec 0\,,  
\end{align}
\end{subequations}
see \citet[(2.19)]{pwfc0c1}, 
where $\mathfrak{t}_i =- (\unitn_{\mathcal{S}_i})_s\,.\,
\vec\mu_{\partial\mathcal{S}_i}$ is the geodesic torsion of $\gamma(t)$ on 
$\mathcal{S}_i(t)$.
In case of a $C^1$--junction, we have that
$\unitn_{\mathcal{S}}=\unitn_{\mathcal{S}_1}=\unitn_{\mathcal{S}_2}$ and
$\vec\mu_{\partial\mathcal{S}}=
\vec\mu_{\partial\mathcal{S}_2}=-\vec\mu_{\partial\mathcal{S}_1}$ at the
junction, and the governing equations on the curve $\gamma(t)$ for $t\in[0,T]$
are
\begin{subequations} \label{eq:C1bc}
\begin{align}
& [\alpha_i\,(k_{m,i} - \spont_i)]_1^2 + [\alpha^G_i]_1^2\,
\vec k_{\gamma}\,.\,\unitn_{\mathcal{S}}  = 0
\,, \\ 
& [\alpha_i\,(\nabSi\,k_{m,i} )]_1^2\,.\,\vec\mu_{\partial\mathcal{S}}
+\varsigma\,\vec k_{\gamma}\,.\,\unitn_{\mathcal{S}} 
 - [\alpha^G_i]_1^2\,\mathfrak{t}_s = 0 
\,,\\ 
& [-\tfrac12\,\alpha_i\,(k_{m,i}  - \spont_i)^2
+\alpha_i\,(k_{m,i} - \spont_i)\,
(k_{m,i} - \vec k_{\gamma}\,.\,\unitn_{\mathcal{S}} )
- \lambda_{A,i}]_1^2
+ [\alpha^G_i]_1^2\,\mathfrak{t}^2
 +\varsigma\,\vec k_{\gamma}\,.\,\vec\mu_{\mathcal{S}} = 0 
\,, 
\end{align}
\end{subequations}
see \citet[(2.20)]{pwfc0c1}, 
where $[a_i]_1^2 = a_2 - a_1$
denotes the jump of the quantity $a$ across $\gamma(t)$, and where
$\mathfrak{t} = \mathfrak{t}_2 = -\mathfrak{t}_1$.

For more basic information on the biophysics of vesicles and
biomembranes we refer to \cite{Seifert93}. Two-component membranes are
discussed in \cite{Lipowsky92,JulicherL93,Seifert93,JulicherL96,%
TuO-Y04,BaumgartDWJ05,Tu13,YangDT17,SahebifardSZ-R17a}.

Many mathematical results are known on the problem of minimising the
Willmore and Helfrich functional, see 
\cite{Nitsche93,MarquesN14,DeckelnickGR17}, 
and for the corresponding gradient flows, see
\cite{Simonett01,KuwertS02}. However, problems involving the
multi-phase Canham--Helfrich--Evans have not been treated
mathematically in much detail yet. We refer to 
\cite{ChoksiMV13,Helmers13,Helmers15,BrazdaLS19preprint} for first results. 
Available
related results for the corresponding gradient flow are restricted to boundary
value problems for Willmore flow with line tension, cf.\ \cite{AbelsGM15}, and
to the evolution of elastic flows with junctions, see 
\cite{GarckeMP19,DallAcquaLP19}. 

Numerical approaches for the evolution of two-phase membranes often
rely on phase field methods, see \cite{WangD08,LowengrubRV09,ElliottS10,%
ElliottS10a,ElliottS13,nsns2phase}. \cite{CoxL15} numerically
studied solutions for the shape equations for two-phase vesicles
numerically and \cite{pwfc0c1} solved the gradient flow dynamics of
two-phase biomembranes formulated in a sharp interface setting
numerically.
A numerical method for the evolution of elastic flows with junctions has been
proposed in \cite{pwftj}. 
In this paper, we will present a parametric finite element method 
for the $L^2$--gradient flow of \eqref{eq:areaE} in
an axisymmetric setting. Throughout the paper, we will make extensive use of
our recent work \cite{axipwf}, in which the analogous gradient flow for a
more general energy for a single surface has been treated.

The outline of the paper is as follows. In Section~\ref{sec:axis} we
derive the axisymmetric version of the governing equations. For the
finite element method it is important to derive a weak formulation for
the highly nonlinear problem. This is done in Section~\ref{sec:weak}
using an approach based on a Lagrangian method. In
Section~\ref{sec:sd} a semidiscretisation is developed which
preserves important energy decay and conservation properties.
In Section~\ref{sec:fd} we analyze a fully discrete version of the
method developed in the previous section and show that the resulting
equations are well-posed. Numerical results are given in
Section~\ref{sec:nr} and a comparison with the seminal experimental
paper by \cite{BaumgartHW03} is given. Finally, in an Appendix, we
show that the weak formulation introduced in Section~\ref{sec:weak} is
consistent with the strong formulation.

\begin{figure}
\center
\newcommand{\AxisRotator}[1][rotate=0]{%
    \tikz [x=0.25cm,y=0.60cm,line width=.2ex,-stealth,#1] \draw (0,0) arc (-150:150:1 and 1);%
}
\begin{tikzpicture}[every plot/.append style={very thick}, scale = 1]
\begin{axis}[axis equal,axis line style=thick,axis lines=center, xtick style ={draw=none}, 
ytick style ={draw=none}, xticklabels = {}, 
yticklabels = {}, 
xmin=-0.2, xmax = 0.8, ymin = -0.4, ymax = 2.75]
after end axis/.code={  
   \node at (axis cs:0.0,2.1) {\AxisRotator[rotate=-90]};
   \draw[blue,->,line width=2pt] (axis cs:0,0) -- (axis cs:0.5,0);
   \draw[blue,->,line width=2pt] (axis cs:0,0) -- (axis cs:0,0.5);
   \node[blue] at (axis cs:0.5,-0.2){$\vec\ek_1$};
   \node[blue] at (axis cs:-0.2,0.5){$\vec\ek_2$};
   \draw[red,very thick] (axis cs: 0,2.5) arc[radius = 50, start angle= 90, end angle= -21];
   \node[black] at (axis cs:0.5,2.5){$\Gamma_1$};
   \draw[yellow,very thick] (axis cs: 0,0.6) arc[radius = 70, start angle= -90, end angle= 49];
   \node[black] at (axis cs:0.6,0.6){$\Gamma_2$};
}
\end{axis}
\end{tikzpicture} \qquad \qquad
\tdplotsetmaincoords{100}{75}
\begin{tikzpicture}[scale=2, tdplot_main_coords,axis/.style={->},thick]
\draw[axis] (-1, 0, 0) -- (1, 0, 0);
\draw[axis] (0, -1, 0) -- (0, 1, 0);
\draw[axis] (0, 0, -0.2) -- (0, 0, 2.8);
\draw[blue,->,line width=2pt] (0,0,0) -- (0,0.5,0) node [below] {$\vec\ek_1$};
\draw[blue,->,line width=2pt] (0,0,0) -- (0,0.0,0.5);
\draw[blue,->,line width=2pt] (0,0,0) -- (0.5,0.0,0);
\node[blue] at (0.1,0.1,0.22){$\vec\ek_3$};
\node[blue] at (0.1,-0.2,0.4){$\vec\ek_2$};
\node[black] at (0.2,0.5,2.1){$\mathcal{S}_1$};
\node[black] at (0.2,0.6,0.9){$\mathcal{S}_2$};
\node at (0.0,0.0,2.7) {\AxisRotator[rotate=-90]};

\tdplottransformmainscreen{0}{0}{1.3}
\shade[tdplot_screen_coords, ball color = yellow] (\tdplotresx,\tdplotresy) circle (0.6);
\tdplottransformmainscreen{0}{0}{1.8}
\tdplottransformmainscreen{0}{0.36}{1.81}
\shade[tdplot_screen_coords, ball color = red] (\tdplotresx,\tdplotresy) arc (-30:210:0.4);
\end{tikzpicture}
\caption{Sketch of $\Gamma_i$ and $\mathcal{S}_i$, $i=1,2$,
as well as the unit vectors $\vec\ek_1$, $\vec\ek_2$ and $\vec\ek_3$.}
\label{fig:sketch}
\end{figure}

\setcounter{equation}{0}
\section{The axisymmetric setting}\label{sec:axis}

For the axisymmetric setting, we assume that 
$\vec x_i(t) : \overline I_i \to \bRgeq\times\bR$ are
parameterisations of $\Gamma_i(t)$, $i=1,2$,
with $I_1 = (0,\frac12)$ and $I_2 = (\frac12,1)$, and such that 
$\vec x_1(\tfrac12,t) = \vec x_2(\tfrac12,t)$ and
$\vec x_i(\rho,t)\,.\,\vec\ek_1 = 0$ if and only if 
$\rho \in \partial I_i \setminus \{\frac12\}$, $i=1,2$,
for all $t\in[0,T]$.
Throughout $\Gamma_i(t)$ represents the generating curve of a
surface $\mathcal{S}_i(t)$ 
that is axisymmetric with respect to the $x_2$--axis, see
Figure~\ref{fig:sketch}. In particular, on defining
\begin{equation*} 
\vec\Pi_3^3(r, z, \theta) = 
(r\,\cos\theta, z, r\,\sin\theta)^T 
\quad\text{for}\quad r\in \bRgeq\,,\ z \in \bR\,,\ \theta \in [0,2\,\pi]
\end{equation*}
and
$\Pi_2^3(r, z) = \{\vec\Pi_3^3(r, z, \theta) : \theta \in [0,2\,\pi)\}$,
we have that
\begin{equation} \label{eq:SGamma}
\mathcal{S}_i(t) = 
\bigcup_{(r,z)^T \in \Gamma_i(t)} \Pi_2^3(r, z)
= \bigcup_{\rho \in \overline{I}_i} \Pi_2^3(\vec x_i(\rho,t))
\quad\text{and}\quad
\gamma(t) = \Pi_2^3(\vec x_1(\tfrac12,t)) = \Pi_2^3(\vec x_2(\tfrac12,t))
\,.
\end{equation}

On assuming, for $t \in [0,T]$ and $i=1,2$, that
\begin{equation*} 
|[\vec x_i]_\rho| \geq c_0 > 0 \qquad \forall\ \rho \in \overline I_i\,,
\end{equation*}
we introduce the arclength $s$ of the curves, i.e.\ $\partial_s =
|[\vec x_i]_\rho|^{-1}\,\partial_\rho$ in $I_i$, and set
\begin{equation} \label{eq:tau}
\vec\tau_i(\rho,t) = [\vec x_i]_s(\rho,t) = 
\frac{[\vec x_i]_\rho(\rho,t)}{|[\vec x_i]_\rho(\rho,t)|} \quad \text{and}
\quad \vec\nu_i(\rho,t) = -[\vec\tau_i(\rho,t)]^\perp
\quad \text{in } \overline{I}_i \,,
\end{equation}
where $(\cdot)^\perp$ denotes a clockwise rotation by $\frac{\pi}{2}$.
Then the normal velocity $\mathcal{V}_{\mathcal{S}_i}$ 
of $\mathcal{S}_i(t)$ 
in the direction $\unitn_{\mathcal{S}_i}$ is given by
\begin{equation*} 
\mathcal{V}_{\mathcal{S}_i} = [\vec x_i]_t(\rho,t)\,.\,\vec\nu_i(\rho,t) 
\quad\text{on } \Pi_2^3(\vec x_i(\rho,t)) \subset \mathcal{S}_i(t)
\quad \forall\ \rho \in \overline I_i\,,\ t \in [0,T]
\,,\ i = 1,2\,.
\end{equation*}
For the curvature $\varkappa_i$ of $\Gamma_i(t)$ it holds that
\begin{equation} \label{eq:varkappa}
\varkappa_i\,\vec\nu_i = \vec\varkappa_i = [\vec\tau_i]_s =
\frac1{|[\vec x_i]_\rho|} \left[ \frac{[\vec x_i]_\rho}{|[\vec x_i]_\rho|} 
\right]_\rho \quad \text{in }\ \overline I_i
\,,\ i = 1,2\,.
\end{equation}
We recall that the mean curvature and Gaussian curvature of $\mathcal{S}_i(t)$
are then given by
\begin{equation} \label{eq:meanGaussS}
\varkappa_{\mathcal{S}_i} = \varkappa_i - 
\frac{\vec\nu_i\,.\,\vec\ek_1}{\vec x_i\,.\,\vec\ek_1}
\quad\text{and}\quad
\Gauss_{\mathcal{S}_i} = -
\varkappa_i\,\frac{\vec\nu_i\,.\,\vec\ek_1}{\vec x_i\,.\,\vec\ek_1}
= \varkappa_i\,(\varkappa_{\mathcal{S}_i}-\varkappa_i)
\quad\text{in }\ \overline{I}_i\,,\ i = 1,2\,,
\end{equation}
respectively; see \eqref{eq:appmeanGaussS} in Appendix~\ref{sec:B}.
More precisely, if $k_{m,i}$ and $k_{g,i}$
denote the mean and Gaussian curvatures of $\mathcal{S}_i(t)$, then
\begin{equation*} 
k_{m,i} = \varkappa_{\mathcal{S}_i}(\rho,t) \ \text{ and }\
k_{g,i} = \Gauss_{\mathcal{S}_i}(\rho,t) 
\quad\text{on } \Pi_2^3(\vec x_i(\rho,t)) \subset \mathcal{S}_i(t)
\quad \forall\ \rho \in \overline I_i\,,\ t \in [0,T]\,.
\end{equation*}

Clearly, for a smooth surface with bounded curvatures it follows from
(\ref{eq:meanGaussS}) that 
\begin{equation} \label{eq:bcnu}
\vec\nu_i(\rho,t) \,.\,\vec\ek_1 = 0
\qquad \forall\ \rho \in \partial I_i \setminus \{\tfrac12\}
\,,\quad \forall\ t\in[0,T]\,,\ i = 1,2\,,
\end{equation}
which is equivalent to
\begin{equation} \label{eq:bc}
[\vec x_i]_\rho(\rho,t) \,.\,\vec\ek_2 = 0
\qquad \forall\ \rho \in \partial I_i \setminus \{\tfrac12\}\,,
\quad \forall\ t\in[0,T]\,,\ i = 1,2\,.
\end{equation}
We note that for the singular fraction in \eqref{eq:meanGaussS} 
it follows from \eqref{eq:bc} and (\ref{eq:bcnu}), on recalling 
(\ref{eq:varkappa}), that
\begin{align}
\lim_{\rho\to \rho_0} 
\frac{\vec\nu_i(\rho,t)\,.\,\vec\ek_1}{\vec x_i(\rho,t)\,.\,\vec\ek_1} &
= \lim_{\rho\to \rho_0} 
\frac{[\vec\nu_i]_\rho(\rho,t)\,.\,\vec\ek_1}
{[\vec x_i]_\rho(\rho,t)\,.\,\vec\ek_1}
= [\vec\nu_i]_s(\rho_0,t)\,.\,\vec\tau_i(\rho_0,t) 
= -\varkappa_i(\rho_0,t)
\nonumber \\ & \hspace{3cm}
\quad \forall\ \rho_0\in\partial I_i \setminus \{\tfrac12\}\,,\
\forall\ t \in [0,T]\,,\ i = 1,2\,.
\label{eq:bclimit}
\end{align}
Moreover, 
on recalling \eqref{eq:idss}, it is easily seen that
\begin{equation} \label{eq:kdSmu}
k_{\partial\mathcal{S}_i,\rm n} = - 
\frac{\vec\nu_i(\tfrac12,t)\,.\,\vec\ek_1}{\vec x_i(\tfrac12,t)\,.\,\vec\ek_1}
\ \text{ and }\
k_{\partial\mathcal{S}_i,\mu}  = - 
\frac{\vec\mu_i(\tfrac12,t)\,.\,\vec\ek_1}{\vec x_i(\tfrac12,t)\,.\,\vec\ek_1}
\quad\text{on } 
\gamma(t)
\quad \forall \ t \in [0,T]\,,\ i= 1,2\,,
\end{equation}
where $\vec\nu_i(\cdot,t)$ is the unit normal on $\Gamma_i(t)$ 
as defined in (\ref{eq:tau}), and where
\begin{equation} \label{eq:mu}
\vec\mu_1(\tfrac12,t) = \vec\tau_1(\tfrac12,t)\,,\
\vec\mu_2(\tfrac12,t) = -\vec\tau_2(\tfrac12,t)
\quad \forall \ t \in [0,T]\,,
\end{equation}
denotes the corresponding conormals
of $\Gamma_i(t)$ at the endpoint $\vec x_1(\tfrac12,t) = \vec x_2(\tfrac12,t)$.
Here we have recalled that the conormal $\vec\mu_{\partial\mathcal{S}_i}$ 
points out of $\mathcal{S}_i(t)$.

We consider the following axisymmetric energy that is equivalent to 
\eqref{eq:EE} for flows of axisymmetric surfaces without topological changes
\begin{align}
& \widetilde E(\vec x(t))
= E(\mathcal{S}(t)) - 2\,\pi\,\sum_{i=1}^2\alpha^G_i\,m(\mathcal{S}_i(t)) 
\nonumber \\ & \quad
= \sum_{i=1}^2 \left[ \pi\,\alpha_i\,\int_{I_i} 
\vec x_i\,.\,\vec\ek_1
\left[ \varkappa_{\mathcal{S}_i} -\spont_i \right]^2 |[\vec x_i]_\rho| \drho
 \right] 
- 2\,\pi\,\sum_{i=1}^2\alpha^G_i\,\vec\mu_i(\tfrac12)\,.\,\vec\ek_1 
+ \pi\,\varsigma\,\sum_{i=1}^2 \vec x_i(\tfrac12)\,.\,\vec\ek_1\,. 
\label{eq:Ec0c1}
\end{align}
In a similar fashion, we define an axisymmetric analogue of (\ref{eq:areaE})
as
\begin{equation} \label{eq:areaEc0c1}
\widetilde E_\lambda(\vec x(t)) 
= \widetilde E(\vec x(t)) + 
\sum_{i=1}^2 \lambda_{A,i}\,A(\vec x(t)) + \lambda_V\,V(\vec x(t))\,,
\end{equation}
where we have defined, observe \eqref{eq:localarea} in Appendix~\ref{sec:B},
\begin{equation} \label{eq:Area}
A_i(\vec x(t))
= 2\,\pi \,\int_{I_i} \vec x_i\,.\,\vec\ek_1\,|[\vec x_i]_\rho| \drho
= \mathcal{H}^2(\mathcal{S}_i(t))
\end{equation}
and, see e.g.\ \citet[(3.10)]{axisd},
\begin{equation} \label{eq:Volume}
V(\vec x(t)) = \pi \sum_{i=1}^2 
\int_{I_i} (\vec x_i\,.\,\vec\ek_1)^2 \,\vec\nu_i\,.\,\vec\ek_1\,
|[\vec x_i]_\rho| \drho
= \mathcal{L}^3(\Omega(t)) \,.
\end{equation}
For later use we observe that
\begin{equation} \label{eq:dAdt}
\ddt\, A_i(\vec x(t))
= 2\,\pi \int_{I_i} \left[ [\vec x_i]_t\,.\,\vec\ek_1\,|[\vec x_i]_\rho| +
 (\vec x_i\,.\,\vec\ek_1) \,([\vec x_i]_t)_\rho\,.\,\vec\tau_i \right] \drho
\,,\ i=1,2\,,
\end{equation}
and
\begin{equation} \label{eq:dVdt}
\ddt\,V(\vec x(t))
= 2\,\pi\,\sum_{i=1}^2 \int_{I_i} (\vec x_i\,.\,\vec\ek_1)
\,[\vec x_i]_t\,.\,\vec\nu_i\,|[\vec x_i]_\rho|\drho \,.
\end{equation}

The axisymmetric formulation of the gradient flow (\ref{eq:gradflowlambda}) 
is now given by
\begin{align} \label{eq:xtbgnlambda}
(\vec x_i\,.\,\vec\ek_1)\,[\vec x_i]_t\,.\,\vec\nu_i & = 
- \alpha_i\,[\vec x_i\,.\,\vec\ek_1\,(\varkappa_{\mathcal{S}_i})_{s}]_s
+ 2\,\alpha_i\,\vec x_i\,.\,\vec\ek_1 \, (\varkappa_{\mathcal{S}_i} 
- \spont_i) \,\Gauss_{\mathcal{S}_i}
\nonumber \\ & \quad
- \vec x_i\,.\,\vec\ek_1
\left[\tfrac12\,\alpha_i\,(\varkappa_{\mathcal{S}_i}^2 -\spont^2_i)
- \lambda_{A,i} \right] \varkappa_{\mathcal{S}_i}
- \lambda_V \,\vec x_i\,.\,\vec\ek_1
\quad\text{in }\ \overline I_i\,,\ i=1,2\,,
\end{align}
where for the first term on the right hand side of \eqref{eq:xtbgnlambda} 
we have observed \eqref{eq:LBSrad} in Appendix~\ref{sec:B}.
At an interface between the two phases, we require axisymmetric versions
of the boundary conditions (\ref{eq:C0bc}) and (\ref{eq:C1bc}).
First of all, we notice that the geodesic torsion $\mathfrak{t}_i$ of $\gamma(t)$ with respect
to $\mathcal{S}_i$, $i=1,2$, is zero in the axisymmetric setting and hence 
the terms involving the geodesic torsion vanish, see also 
\citet[(2.25)]{axipwf}.
In the $C^0$--case, relating to \eqref{eq:C0bc}, 
we have for the axisymmetric situation the following
conditions at the point $\rho=\frac 12$ and for $t\in[0,T]$:
\begin{subequations} \label{eq:axiC0bc}
\begin{align}
& \alpha_i\,(\varkappa_{\mathcal{S}_i} - \spont_i) - \alpha^G_i\,
\frac{\vec\nu_i\,.\,\vec\ek_1}{\vec x\,.\,\vec\ek_1}
 = 0\,,\quad  i = 1,2\,,  \label{eq:axiC0bc1} \\
& \sum_{i=1}^2 
\left[ 
(-1)^{i-1}\, \alpha_i\,(\varkappa_{\mathcal{S}_i})_s\, \vec\nu_i
-\left(\tfrac 12\,\alpha_i\, ( \varkappa_{\mathcal{S}_i}- \spont_i )^2 
+\alpha_i^G\,\Gauss_{\mathcal{S}_i} +\lambda_{A,i} \right) \vec\mu_i  \right]
- \frac\varsigma{\vec x \,.\,\vec\ek_1}\, \vec\ek_1  =  \vec 0\,,
   \label{eq:axiC0bc2}
\end{align}
\end{subequations}
where we used the notation $\vec x=\vec x_1=\vec x_2$ at
$\rho=\frac 12$.
For the $C^1$--case, and so corresponding to \eqref{eq:C1bc}, 
we obtain at $\rho=\frac 12$ and for $t\in[0,T]$:
\begin{subequations} \label{eq:axiC1bc}
\begin{align}
& [
\alpha_i\,(\varkappa_{\mathcal{S}_i} - \spont_i)]_1^2 - [\alpha^G_i]_1^2\,
\frac{\vec\nu\,.\,\vec\ek_1}{\vec x\,.\,\vec\ek_1}=0
\,, \label{eq:axiC1bc1} \\
& -[\alpha_i\,(\varkappa_{\mathcal{S}_i} )_s]_1^2
-\varsigma\, \frac{\vec\nu\,.\,\vec\ek_1}{\vec x\,.\,\vec\ek_1}=0\,,
\label{eq:axiC1bc2} \\
& [-\tfrac12\,\alpha_i\,(\varkappa_{\mathcal{S}_i}  - \spont_i)^2
+\alpha_i\,(\varkappa_{\mathcal{S}_i} - \spont_i)\,\varkappa_i
- \lambda_{A,i}]_1^2
 -\varsigma\,\frac{\vec\mu\,.\,\vec\ek_1}{\vec x\,.\,\vec\ek_1} = 0
\,, \label{eq:axiC1bc3}
\end{align}
\end{subequations}
where we have defined $\vec\nu=\vec\nu_1=\vec\nu_2$ and 
$\vec\mu=\vec\mu_2=-\vec\mu_1$ at $\rho=\frac12$, and where we
have used (\ref{eq:idss}), (\ref{eq:meanGaussS}) and (\ref{eq:kdSmu}).

Finally, we impose the following boundary conditions at the axis of rotation,
for $t\in[0,T]$:
\begin{subequations}
\label{eq:part0I}
\begin{align}
\vec x_i\,.\,\vec\ek_1 &= 0 \ \text{on } \partial I_i \setminus\{\tfrac12\}\,, 
\label{eq:fixed0} \\
[\vec x_i]_\rho\,.\,\vec\ek_2 & = 0\ \text{on } 
\partial I_i \setminus\{\tfrac12\}\,,
\label{eq:bcbc} \\
[\varkappa_{\mathcal{S}_i}]_\rho &= 0 \ \text{on } 
\partial I_i \setminus\{\tfrac12\}\,.
\label{eq:sdbca} 
\end{align}
\end{subequations}
Here (\ref{eq:sdbca}) ensures that the radially
symmetric functions on $\mathcal{S}_i(t)$ induced by 
$\varkappa_{{\mathcal S}_i}$, $i=1,2$,
are differentiable, while \eqref{eq:bcbc} is the same as \eqref{eq:bc}. 

Clearly, for surface area and volume conserving flows, the Lagrange multipliers
\linebreak
$(\lambda_{A,1}(t),\lambda_{A,2}(t),\lambda_V(t))^T \in \bR^3$ 
in \eqref{eq:xtbgnlambda} need to be chosen such that 
\begin{equation} \label{eq:xsidedtSAV}
\ddt\,\int_{I_i} \vec x_i\,.\,\vec\ek_1\,|[\vec x_i]_\rho| \drho
= 0\,,\ i=1,2\,, \qquad 
\ddt\, \sum_{i=1}^2 
\left( (\vec x_i\,.\,\vec\ek_1)^2 ,\vec\nu_i\,.\,\vec\ek_1\,
|[\vec x_i]_\rho| \right)
= 0\,,
\end{equation}
where we recall \eqref{eq:dAdt} and \eqref{eq:dVdt}.

\setcounter{equation}{0}
\section{Weak formulation} \label{sec:weak}

Using the formal calculus of PDE constrained optimisation, in this section we
derive a weak formulation for the gradient flow \eqref{eq:xtbgnlambda}.
The necessary techniques are described in \citet[\S9.3]{bgnreview}, and details
for the case of a one-phase axisymmetric surface can be found in 
\citet[\S3.1]{axipwf}.
The fact that the obtained weak formulation is indeed consistent with
\eqref{eq:xtbgnlambda} and the boundary conditions \eqref{eq:axiC0bc},
\eqref{eq:axiC1bc} and \eqref{eq:part0I} will be shown in Appendix~\ref{sec:A}.

We begin by defining the following function spaces. Let
\begin{align*} 
\xspace_i &= \{\vec\eta_i \in [H^1(I_i)]^2 :
\vec\eta_i(\rho)\,.\,\vec\ek_1 = 0 \quad \forall\ \rho \in \partial I_i
\setminus\{\tfrac12\}\}\,,\ i=1,2\,, \nonumber \\
\xspace &= \{(\vec\eta_1,\vec\eta_2) \in \mathop{\times}_{i=1}^2 : \xspace_i :
\vec\eta_1(\tfrac12) = \vec\eta_2(\tfrac12)\}\,, 
\end{align*}
as well as $\yspace = \yspace_1 \times \yspace_2$,
with $\yspace_i = \xspace_i$, $i=1,2$, and
\begin{equation} \label{eq:Vyczcoi}
\Vycztest = \{(\vec\eta_1,\vec\eta_2) \in \yspace : 
\vec\eta_1(\tfrac12) = \vec\eta_2(\tfrac12) = \vec 0 \}\,,\quad
\Vycotest = \{(\vec\eta_1,\vec\eta_2) \in \yspace : 
\vec\eta_1(\tfrac12) = \vec\eta_2(\tfrac12)\}\,. 
\end{equation}

For later use, we define the first variation of a differentiable quantity 
$B(\vec x)$, in the direction $\vec\chi$ as
\begin{equation*} 
\left[\deldel{\vec x}\,B(\vec x)\right](\vec\chi)
= \lim_{\varepsilon \rightarrow 0}
\frac{B(\vec x + \varepsilon\, \vec\chi)-B(\vec x)}{\varepsilon}\,,
\end{equation*}
and we recall, for example, the variations of some geometric quantities
from \citet[(3.3)]{axipwf}.

Let $(\cdot,\cdot)$ denote both the $L^2$--inner product on $I_1$ and on $I_2$.
It will always be clear from the integrand which product is meant, and so we
use this abuse of notation throughout the paper.
We now consider the following weak formulation of (\ref{eq:varkappa})
with $\vec x_i \in \xspace_i$ and $\varkappa_i \in L^2(I_i)$ such that 
\begin{equation} \label{eq:varkappaweak}
\left( \varkappa_i\,\vec\nu_i,\vec\eta_i\, |[\vec x_i]_\rho| \right)
+ \left(\vec\tau_i,[\vec\eta_i]_\rho \right) =
 \left[\vec{\rm m}_i\,.\,\vec\eta_i\right](\tfrac12)
\qquad \forall\ \vec\eta_i \in \Vyczcoi\,,\ i = 1,2\,,
\end{equation}
where we recall (\ref{eq:tau}).
We note that (\ref{eq:varkappaweak}) weakly imposes (\ref{eq:bc}).
However, (\ref{eq:varkappaweak}) also yields that  
$\vec{\rm m}_i(\frac12) = \vec\mu_i(\frac12) \in \bR^2$.
This will not be the case under discretisation, where 
$\vec{\rm m}_i(\frac12) \in \bR^2$
is an approximation to the conormal $\vec\mu_i(\frac12)$.   
As $\vec{\rm m}_i$ are only defined at $\rho=\tfrac12$, we simply write
$\vec{\rm m}_i$ for $\vec{\rm m}_i(\tfrac12)$ from now on.
On introducing the parameter $C_1 \in \{0,1\}$, we can easily
model the case of either a $C^0$-- or a $C^1$--junction with the help of the
side constraint 
\begin{equation} \label{eq:mC0C1}
C_1\,(\vec{\rm m}_1 + \vec{\rm m}_2) = \vec 0\,.
\end{equation}
We remark that upon discretisation, \eqref{eq:varkappaweak} leads to an
equidistribution property in the two phases. We refer to the recent review
article \cite{bgnreview}, and to Remark~\ref{rem:equid} below, for more
details.

Now, in order to study the $L^2$--gradient flow of the energy \eqref{eq:Ec0c1},
subject to the side constraints \eqref{eq:varkappaweak} and \eqref{eq:mC0C1},
we consider the Lagrangian
\begin{align} \label{eq:Lag}
& \mathcal{L}((\vec x_i, \varkappa_i^\star, \vec{\rm m}_i, \vec y_i)_{i=1}^2,
\vec\phi) =
\pi\,\sum_{i=1}^2 \left( \alpha_i
\left[ \varkappa_i^\star 
- \frac{\vec\nu_i\,.\,\vec\ek_1}{\vec x_i\,.\,\vec\ek_1}
-\spont_i \right]^2, 
\vec x_i\,.\,\vec\ek_1 \,|[\vec x_i]_\rho|\right) \nonumber \\ & \qquad
+ \pi\,\varsigma\,\sum_{i=1}^2 \vec x_i(\tfrac12)\,.\,\vec\ek_1
- \sum_{i=1}^2 \left(\varkappa_i^\star\,\vec\nu_i,\vec y_i\,|[\vec x_i]_\rho|
\right) 
- \sum_{i=1}^2 \left(\vec\tau_i, [\vec y_i]_\rho \right) 
\nonumber \\ & \qquad
 + \sum_{i=1}^2 \vec{\rm m}_i\,.\left(\vec y_i(\tfrac12) -
2\,\pi\,\alpha^G_i\,\vec\ek_1\right)
+ C_1\,(\vec{\rm m}_1 + \vec{\rm m}_2)\,.\,\vec\phi \,,
\end{align}
for $\vec x = (\vec x_1,\vec x_2) \in \xspace$, 
$\varkappa^\star = (\varkappa_1^\star,\varkappa_2^\star) 
\in L^2(I_1) \times L^2(I_2)$, 
$(\vec{\rm m}_1,\vec{\rm m}_2) \in [\bR^2]^2$, 
$\vec y = (\vec y_1,\vec y_2) \in \yspace$ and $\vec\phi \in \bR^2$.

Upon taking the appropriate variations
$\vec\chi = (\vec\chi_1,\vec\chi_2) \in \xspace$ in $\vec x$,
$\chi_i \in L^2(I_i)$ in $\varkappa_i^\star$, 
$\vec z_i \in \bR^2$ in $\vec{\rm m}_i$, 
$\vec\eta_i\in \Vyczcoi$ in $\vec y_i$ and $\vec w \in \bR^2$ in $\vec\phi$,
we obtain our desired weak formulation,
see also \citet[\S3.1]{axipwf} for more details.
For example, the variations in $\vec {\rm m}_i$ yield that
\begin{equation} \label{eq:yalphaG}
- 2\,\pi\,\alpha^G_i\,\vec\ek_1 + \vec y_i(\tfrac12) + C_1\,\vec\phi = \vec 0
\,,\ i = 1,2\,.
\end{equation}
Moreover, taking variations $\vec\eta_i \in \Vyczcoi$ in $\vec y_i$, 
and setting $\left[\deldel{\vec y_i}\, \mathcal{L}\right](\vec\eta_i) = 0$
gives \eqref{eq:varkappaweak}, with $\varkappa_i$ replaced by
$\varkappa_i^\star$. Thus we obtain $\varkappa^\star_i=\varkappa_i$, $i=1,2$,
and we are going to use these identities from now on.

Taking variations $\chi_i \in L^2(I_i)$ in $\varkappa_i^\star$ and setting
$\left[\deldel{\varkappa_i^\star}\, \mathcal{L}\right](\chi_i) = 0$
we obtain, on using $\varkappa_i^\star = \varkappa_i$, that
\begin{equation*} 
2\,\pi\,\alpha_i\left( 
\varkappa_i - \frac{\vec\nu_i\,.\,\vec\ek_1}{\vec x_i\,.\,\vec\ek_1}
-\spont_i, \vec x_i\,.\,\vec\ek_1\,\chi_i\,|[\vec x_i]_\rho|\right)
- \left(\vec\nu_i\,.\,\vec y_i,\chi_i \,|[\vec x_i]_\rho|\right) = 0
\quad \forall\ \chi_i \in L^2(I_i)\,,\ i=1,2\,,
\end{equation*}
which implies that
\begin{equation} \label{eq:kappaid}
2\,\pi\,\vec x_i\,.\,\vec\ek_1 \,
\alpha_i\left[\varkappa_i - \frac{\vec\nu_i\,.\,\vec\ek_1}
{\vec x_i\,.\,\vec\ek_1} - \spont_i\right]
= \vec y_i\,.\,\vec\nu_i
\quad \text{in }\ \overline I_i\,,\ i = 1,2\,.
\end{equation}
Finally, taking variations in $\vec\phi$ and setting them to zero 
gives \eqref{eq:mC0C1}. Setting $\vec x (\cdot ,t) = (\vec x_1, \vec
x_2)(\cdot ,t) \in \xspace$, the evolution law for $\vec x$ is given as
\begin{equation*}
  2\,\pi\, \sum_{i=1}^2 \left((\vec x_i\,.\, \vec\ek_1)\,[\vec x_i]_t\,.\,
  \vec\nu_i, \vec\chi_i\,.\,\vec\nu_i\, |[\vec x_i]_\rho|\right) 
= - \left[\frac{\delta}{\delta\vec x}\, \mathcal{L}\right](\vec\chi) 
\qquad \forall\ \vec\chi = (\vec \chi_1,\vec \chi_2)\in \xspace\,.
\end{equation*}
Here, the term on the left hand side is the normal part of the
velocity integrated on the surface against the test function, which is
the natural term for a gradient flow formulation. 

Overall we obtain the following weak formulation,
compare with \citet[(3.22)]{axipwf}.
Let $(\vec x_1, \vec x_2)(\cdot,0) \in \xspace$ and $\alpha_i \in \bRplus$,
$\spont_i,\alpha^G_i \in \bR$ be given for $i=1,2$.
For $t \in (0,T]$, find $(\vec x_1,\vec x_2)(\cdot,t) \in \xspace$, 
$(\varkappa_i,\vec{\rm m}_i,\vec y_i) 
\in L^2(I_i) \times \bR^2 \times \Vyczcoi$, $i=1,2$, and
$C_1\,\vec\phi \in \bR^2$ such that 
\begin{subequations} \label{eq:weak3}
\begin{align}
& 2\,\pi\, \sum_{i=1}^2
\left((\vec x_i\,.\,\vec\ek_1)\,[\vec x_i]_t\,.\,\vec\nu_i,
\vec\chi_i\,.\,\vec\nu_i\,|[\vec x_i]_\rho|\right)
- \sum_{i=1}^2
\left([\vec y_i]_\rho\,.\,\vec\nu_i, [\vec\chi_i]_\rho\,.\,\vec\nu_i
\,|[\vec x_i]_\rho|^{-1}\right) 
\nonumber \\ & \quad
= 
- \pi\,\sum_{i=1}^2 \left( \alpha_i
\left[\varkappa_i - \frac{\vec\nu_i\,.\,\vec\ek_1}{\vec x_i\,.\,\vec e_1} 
-\spont_i\right]^2 , 
\vec\chi_i\,.\,\vec\ek_1\,|[\vec x_i]_\rho| + (\vec x_i\,.\,\vec\ek_1)\,
\vec\tau_i\,.\,[\vec\chi_i]_\rho \right) 
\nonumber \\ & \qquad \qquad 
- 2\,\pi\,\sum_{i=1}^2 \alpha_i
\left(\varkappa_i - \frac{\vec\nu_i\,.\,\vec\ek_1}{\vec x_i\,.\,\vec e_1} 
- \spont_i, \frac{\vec\nu_i\,.\,\vec e_1}{\vec x_i\,.\,\vec e_1}\,
\vec\chi_i\,.\,\vec\ek_1 \,|[\vec x_i]_\rho| \right) 
\nonumber \\ & \qquad \qquad 
- 2\,\pi\,\sum_{i=1}^2 \alpha_i \left(
\varkappa_i - \frac{\vec\nu_i\,.\,\vec\ek_1}{\vec x_i\,.\,\vec e_1} - \spont_i,
(\vec\tau_i\,.\,\vec\ek_1)\,[\vec\chi_i]_\rho\,.\,\vec\nu_i \right) 
+ \sum_{i=1}^2 \left( \varkappa_i\, \vec y_i^\perp, [\vec\chi_i]_\rho \right)
\nonumber \\ & \qquad \qquad
- \pi\,\varsigma\,\sum_{i=1}^2 \vec\chi_i(\tfrac12)\,.\,\vec\ek_1
\qquad \forall\ \vec\chi = (\vec\chi_1,\vec\chi_2) \in \xspace \,,
\label{eq:weak3a} \\
&2\,\pi \left(\alpha_i
\left[\varkappa_i - \frac{\vec\nu_i\,.\,\vec\ek_1}{\vec x_i\,.\,\vec\ek_1}
-\spont_i\right],
\vec x_i\,.\,\vec\ek_1\,\chi_i\,|[\vec x_i]_\rho|\right)
- \left(\vec\nu_i\,.\,\vec y_i,\chi_i\,|[\vec x_i]_\rho|\right) = 0
\nonumber \\ & \hspace{9cm}
\qquad \forall\ \chi_i \in L^2(I_i)\,,\ i=1,2\,, \label{eq:weak3b} \\
&\left( \varkappa_i\,\vec\nu_i,\vec\eta_i\, |[\vec x_i]_\rho| \right)
+ \left([\vec x_i]_\rho,[\vec\eta_i]_\rho \,|[\vec x_i]_\rho|^{-1}\right) =
 \vec{\rm m}_i\,.\,\vec\eta_i(\tfrac12)
\qquad \forall\ \vec\eta_i \in \Vyczcoi\,,\ i = 1,2\,,  \label{eq:weak3c} \\
&- 2\,\pi\,\alpha^G_i\,\vec\ek_1 + \vec y_i(\tfrac12) + C_1\,\vec\phi = \vec 0
\,,\ i = 1,2\,,\label{eq:weak3d} \\
& C_1\,(\vec{\rm m}_1 + \vec{\rm m}_2) = \vec 0\,. \label{eq:weak3e} 
\end{align}
\end{subequations}

\begin{rem} \label{rem:C0}
In the case $C_1=0$, the condition \eqref{eq:weak3d} reduces to the
Dirichlet boundary condition $\vec y_i(\tfrac12) =
2\,\pi\,\alpha^G_i\,\vec\ek_1$, $i=1,2$. Moreover, 
the condition \eqref{eq:weak3e} disappears, and so $\vec{\rm m}_i$ can be
eliminated from the formulation by replacing $\vec\eta_i \in \Vyczcoi$ in 
\eqref{eq:weak3c} with test functions such that $\vec\eta_i(\frac12)=\vec0$. 
The resulting formulation is to find
$(\vec x_1,\vec x_2)(\cdot,t) \in \xspace$, 
$(\varkappa_i,\vec y_i) 
\in L^2(I_i) \times \Vyczcoi$
with $\vec y_i(\tfrac12) = 2\,\pi\,\alpha^G_i\,\vec\ek_1$, $i=1,2$, such that 
\eqref{eq:weak3a}, \eqref{eq:weak3b} and 
\begin{equation*} 
\sum_{i=1}^2
\left( \varkappa_i\,\vec\nu_i,\vec\eta_i\, |[\vec x_i]_\rho| \right)
+ \sum_{i=1}^2
\left([\vec x_i]_\rho,[\vec\eta_i]_\rho \,|[\vec x_i]_\rho|^{-1}\right) = 0
\qquad \forall\ (\vec\eta_1,\vec\eta_2) \in \Vycztest\,.
\end{equation*}

In the case $C_1=1$, on the other hand, it follows from 
\eqref{eq:weak3d} and \eqref{eq:weak3e} that 
$\vec y_1(\tfrac12) - \vec y_2(\tfrac12) = 2\,\pi\,[\alpha^G_1 -
\alpha^G_2]\,\vec\ek_1$ and that $\vec{\rm m}_2 = - \vec{\rm m}_1$.
Hence we can again eliminate $\vec{\rm m}_i$, as well as $\vec\phi$,
and reduce the weak formulation to:
Find $(\vec x_1,\vec x_2)(\cdot,t) \in \xspace$, 
$(\varkappa_i,\vec y_i) \in L^2(I_i) \times \Vyczcoi$, 
$i=1,2$, with
$\vec y_1(\tfrac12,t) - \vec y_2(\tfrac12,t) = 2\,\pi\,[\alpha^G_1 -
\alpha^G_2]\,\vec\ek_1$, such that
\eqref{eq:weak3a}, \eqref{eq:weak3b} and 
\begin{equation} 
\sum_{i=1}^2\left( \varkappa_i\,\vec\nu_i,\vec\eta_i\, |[\vec x_i]_\rho| \right)
+ \sum_{i=1}^2
\left([\vec x_i]_\rho,[\vec\eta_i]_\rho \,|[\vec x_i]_\rho|^{-1}\right) = 0
 \quad \forall\ (\vec\eta_1,\vec\eta_2) \in \Vycotest\,,
\label{eq:weak3cc1} 
\end{equation}
where we have used that
$\sum_{i=1}^2 \vec{\rm m}_i\,.\,\vec\eta_i(\tfrac12) = 0$
for $(\vec\eta_1,\vec\eta_2) \in \Vycotest$, recall
{\rm (\ref{eq:Vyczcoi})}.
\end{rem}

\begin{rem} \label{rem:kappaS} 
It is also possible to consider a weak formulation based on
$\varkappa_{\mathcal{S}_i}$ as variables, similarly to 
\citet[\S3.2]{axipwf}. 
In particular, it follows from \eqref{eq:meanGaussS} and \eqref{eq:varkappa} 
that 
\begin{equation*}
\varkappa_{\mathcal{S}_i}\,\vec\nu_i = 
\frac1{|[\vec x_i]_\rho|} \left[ \frac{[\vec x_i]_\rho}{|[\vec x_i]_\rho|} 
\right]_\rho - \frac{\vec\nu_i\,.\,\vec\ek_1}{\vec
x_i\,.\,\vec\ek_1}\,\vec\nu_i \quad \text{in }\ \overline I_i\,,\ i=1,2\,,
\end{equation*}
and so the side constraints \eqref{eq:varkappaweak} are replaced by
\begin{equation} 
 \left(\vec x_i\,.\,\vec\ek_1\,
\varkappa_{\mathcal{S}_i}\,\vec\nu_i + \vec\ek_1,\vec\eta_i 
\,|[\vec x_i]_\rho| \right)
+ \left( (\vec x_i\,.\,\vec\ek_1)\,\vec\tau_i, [\vec\eta_i]_\rho \right) 
= \left[(\vec x_i\,.\,\vec\ek_1)\,
\vec{\rm m}_i\,.\,\vec\eta_i\right](\tfrac12)
\quad \forall\ \vec\eta_i \in \Vyczcoi\,,\ i = 1,2\,.
\label{eq:varkappaSweak}
\end{equation}
Hence the appropriate Lagrangian for the $L^2$--gradient flow of
\eqref{eq:Ec0c1} is
\begin{align*} 
& \mathcal{L}_{\mathcal{S}}
((\vec x_i, \varkappa_{\mathcal{S}_i}^\star, \vec{\rm m}_i, 
\vec y_{\mathcal{S}_i})_{i=1}^2,
\vec\phi) =
\pi\,\sum_{i=1}^2 \left( \alpha_i
\left[ \varkappa_{\mathcal{S}_i}^\star -\spont_i \right]^2 , 
\vec x_i\,.\,\vec\ek_1 \,|[\vec x_i]_\rho|\right) \nonumber \\ & \qquad
+ \pi\,\varsigma\,\sum_{i=1}^2 \vec x_i(\tfrac12)\,.\,\vec\ek_1
- \sum_{i=1}^2 \left(\vec x_i\,.\,\vec\ek_1\,\varkappa^\star_{\mathcal{S}_i}
\,\vec\nu_i  + \vec\ek_1,\vec y_{\mathcal{S}_i}\, |[\vec x_i]_\rho|\right)
 - \sum_{i=1}^2 \left( (\vec x_i\,.\,\vec\ek_1)\,\vec\tau_i,
(\vec y_{\mathcal{S}_i})_\rho\right) \nonumber \\ & \qquad
 + \sum_{i=1}^2 \vec{\rm m}_i\,.\left([(\vec x_i\,.\,\vec\ek_1)\,
\vec y_{\mathcal{S}_i}](\tfrac12) -
2\,\pi\,\alpha^G_i\,\vec\ek_1\right)
+ C_1\,(\vec{\rm m}_1 + \vec{\rm m}_2)\,.\,\vec\phi \,,
\end{align*}
for $(\vec x_1,\vec x_2) \in \xspace$, 
$\varkappa_{\mathcal{S}_i}^\star \in L^2(I_i)$,
$\vec m_i \in \bR^2$, $\vec y_{\mathcal{S}_i} \in \Vyczcoi$ 
and $\vec\phi \in \bR^2$.
As before, upon taking variations in 
$(\vec\chi_1,\vec\chi_2) \in \xspace$ in $\vec x$,
$\chi_i \in L^2(I_i)$ in $\varkappa_i^\star$, 
$\vec z_i \in \bR^2$ in $\vec{\rm m}_i$, 
$\vec\eta_i\in \Vyczcoi$ in $\vec y_i$ and $\vec w \in \bR^2$ in $\vec\phi$,
we obtain a weak formulation.
\end{rem}

\subsection{Conserved flows}\label{sec:LagC}
On writing (\ref{eq:weak3a}) as
\begin{align*} 
& 
2\,\pi\,\sum_{i=1}^2 \left((\vec x_i\,.\,\vec\ek_1)\,[\vec x_i]_t\,.\,\vec\nu_i,
\vec\chi_i\,.\,\vec\nu_i\,|[\vec x_i]_\rho|\right)
- \sum_{i=1}^2 \left([\vec y_i]_\rho\,.\,\vec\nu_i, 
[\vec\chi_i]_\rho \,.\,\vec\nu_i \, |[\vec x_i]_\rho|^{-1}\right) 
\nonumber \\ & \qquad
= \sum_{i=1}^2 \left( \vec f_i, \vec\chi_i\,|[\vec x_i]_\rho|\right)
\quad \forall\ \vec\chi \in \xspace \,,
\end{align*}
a weak formulation of \eqref{eq:xtbgnlambda} and \eqref{eq:xsidedtSAV} 
is given by \eqref{eq:weak3}, with \eqref{eq:weak3a} replaced by
\begin{align} \label{eq:weak4LMa}
& 
2\,\pi\,\sum_{i=1}^2 \left((\vec x_i\,.\,\vec\ek_1)\,[\vec x_i]_t\,.\,\vec\nu_i,
\vec\chi_i\,.\,\vec\nu_i\,|[\vec x_i]_\rho|\right)
- \sum_{i=1}^2 \left([\vec y_i]_\rho\,.\,\vec\nu_i, 
[\vec\chi_i]_\rho \,.\,\vec\nu_i \, |[\vec x_i]_\rho|^{-1}\right) 
\nonumber \\ & \quad
= \sum_{i=1}^2 \left( \vec f_i, \vec\chi_i\,|[\vec x_i]_\rho|\right)
- 2\,\pi\, \sum_{i=1}^2 \lambda_{A,i} 
\left[
\left( \vec\ek_1, \vec\chi_i\,|[\vec x_i]_\rho|\right) 
+ \left( (\vec x_i\,.\,\vec\ek_1)\,\vec\tau_i
, [\vec\chi_i]_\rho\right) \right]
\nonumber \\ & \qquad
- 2\,\pi\, \lambda_V \sum_{i=1}^2 \left((\vec x_i\,.\,\vec\ek_1)\,\vec\nu_i, 
\vec\chi_i\,|[\vec x_i]_\rho|\right)\quad \forall\ \vec\chi \in \xspace
\,,
\end{align}
where $(\lambda_{A,1}(t), \lambda_{A,2}(t),\lambda_V(t))^T \in \bR^3$ 
are chosen such that \eqref{eq:xsidedtSAV} holds, which is equivalent to
\begin{equation} \label{eq:xsideSAV}
A_i(\vec x(t)) = A_i(\vec x(0))\,,\ i=1,2\,,
\qquad V(\vec x(t)) = V(\vec x(0))\,.
\end{equation}
We note that for the second term on the right hand side of \eqref{eq:weak4LMa} 
we have observed that
$\left[\deldel{\vec x}\, A_i(\vec x) \right](\vec\chi) = 2\,\pi
\left( \vec\ek_1, \vec\chi_i\,|[\vec x_i]_\rho|\right) 
+ 2\,\pi \left( (\vec x_i\,.\,\vec\ek_1)\,\vec\tau_i
, [\vec\chi_i]_\rho\right)$, $i=1,2$, similarly to \eqref{eq:dAdt},
compare also with \eqref{eq:varkappaSweak}. The advantage of the formulation
\eqref{eq:weak4LMa} over one with $\varkappa_{\mathcal{S}_i}$ is that
mimicking \eqref{eq:weak4LMa} on the discrete level will allow for a stability
estimate.

\setcounter{equation}{0}
\section{Semidiscrete approximation} \label{sec:sd}

Let $\overline I_i=\bigcup_{j=1}^{J_i} I_{i,j}$, $J_i\geq3$, be
decompositions of $\overline I_i$ into intervals given by the nodes $q_{i,j}$,
$I_{i,j}=[q_{i,j-1},q_{i,j}]$. 
For simplicity, and without loss of generality,
we assume that the subintervals form equipartitionings of $\overline I_i$,
i.e.\ that 
\begin{equation} \label{eq:Jequi}
q_{i,j} = \tfrac12\,(i-1) + j\,h_i\,,\quad \text{with}\quad 
h_i = (2\,J_i)^{-1}\,, \qquad j=0,\ldots, J_i\,.
\end{equation}
The necessary finite element spaces are defined as follows:
\begin{align*}
V^h_i & = \{\chi_i \in C(\overline I_i) : \chi_i\!\mid_{I_{i,j}} 
\text{ is linear}\ \forall\ j=1,\ldots, J_i\}\,,\ i=1,2\,,\\
\quad\text{and}\quad
\Vhi & = [V^h_i]^2\,,\ i=1,2\,.
\end{align*}
We also define $\yspace^h_i = \yspace_i \cap \Vhi$, $i=1,2$, as well as
\begin{align*}
\xspace^h & = \xspace \cap \mathop{\times}_{i=1}^2 \Vhi\,,\quad
\Vhycztest = \Vycztest \cap \mathop{\times}_{i=1}^2 \Vhi\,,\quad
\Vhycotest = \Vycotest \cap \mathop{\times}_{i=1}^2 \Vhi\,,\nonumber \\
W^h_1 & = \{ \chi_1 \in V^h_1 : \chi_1(0) = 0 \}\,,\
W^h_2 = \{ \chi_2 \in V^h_2 : \chi_2(1) = 0 \}\,,\
W^h = W^h_1 \times W^h_2\,.
\end{align*}
Let $\{\chi_{i,j}\}_{j=0}^{J_i}$ denote the standard basis of $V^h_i$.
For later use, we let $\pi^h_i:C(\overline I_i)\to V^h_i$ 
be the standard interpolation operator at the nodes $\{q_{i,j}\}_{j=0}^{J_i}$,
and similarly $\vec\pi^h_i:[C(\overline I_i)]^2 \to \Vhi$.
Let the mass lumped $L^2$--inner product $(f,g)^h$,
for two piecewise continuous functions on $I_i$, with possible jumps at the 
nodes $\{q_{i,j}\}_{j=1}^{J_i-1}$, 
be defined as 
\begin{equation}
( f, g )^h = \tfrac12\,h\,\sum_{j=1}^{J_i}
\left[(f\,g)(q_{i,j}^-) + (f\,g)(q_{i,j-1}^+)\right], \label{eq:ip0}
\end{equation}
where we define
$f(q^\pm)=\underset{\delta\searrow 0}{\lim}\ f(q\pm\delta)$.
The definition (\ref{eq:ip0}) naturally extends to vector valued functions.

Let $(\vec X^h_i(t))_{t\in[0,T]}$, with 
$(\vec X^h_1(t),\vec X^h_2(t))\in \xspace^h$,
be approximations to $(\vec x_i(t))_{t\in[0,T]}$ and define
$\Gamma^h_i(t) = \vec X^h_i(t)(\overline I_i)$. 
From now on we use the shorthand notation
$\vec X^h = (\vec X^h_1,\vec X^h_2)$, and similarly for all the other finite
element functions.

\begin{ass} \label{ass:Ah1}
Let
\begin{equation} \label{eq:Xhpos}
\vec X^h_i(\rho,t) \,.\,\vec\ek_1 > 0 \quad 
\forall\ \rho \in \overline I_i \setminus \{0,1\}
\qquad \forall\ t \in [0,T]\,,\ i=1,2\,.
\end{equation}
In addition, let $\vec X^h_i(q_{i,j},t) \ne \vec X^h_i(q_{i,j+1},t)$,
$j=0,\ldots,J_i-1$, for all $t\in [0,T]$, $i=1,2$.
\end{ass}

Then, similarly to (\ref{eq:tau}), we set
\begin{equation} \label{eq:tauh}
\vec\tau^h_i = [\vec X^h_i]_s = \frac{[\vec X^h_i]_\rho}
{|[\vec X^h_i]_\rho|} 
\quad \text{and} \quad \vec\nu^h_i = -(\vec\tau^h_i)^\perp
\quad \text{in } \overline I_i\,,
\end{equation}
which is well-defined if Assumption~\ref{ass:Ah1} holds.
We note that \eqref{eq:Xhpos} implies $\vec\tau^h_i\,.\,\vec\ek_1 \not=0$
on elements touching the $x_2$--axis, and so
\begin{equation*} 
\vec\nu^h_i\,.\,\vec\ek_2 \not=0 \quad\text{ on } \partial I_i
\setminus\{\tfrac12\}\,,
\end{equation*}
compare also with \eqref{eq:bcnu} and \eqref{eq:bc}.

\begin{ass} \label{ass:Ah2}
Let {\rm Assumption~\ref{ass:Ah1}} hold and let
$\vec X^h_i(q_{i,j-1},t) \ne \vec X^h_i(q_{i,j+1},t)$,
$j=1,\ldots,J_i-1$, for all $t\in [0,T]$.
\end{ass}

For later use, we let 
$\vec\omega^h_i \in \Vhi$ be the mass-lumped 
$L^2$--projection of $\vec\nu^h_i$ onto $\Vhi$, $i=1,2$, i.e.\
\begin{equation} \label{eq:omegah}
\left(\vec\omega_i^h, \vec\varphi_i \, |[\vec X^h_i]_\rho| \right)^h 
= \left( \vec\nu^h_i, \vec\varphi_i \, |[\vec X^h_i]_\rho| \right)
= \left( \vec\nu^h_i, \vec\varphi_i \, |[\vec X^h_i]_\rho| \right)^h
\qquad \forall\ \vec\varphi_i\in\Vhi\,.
\end{equation}
Assumption~\ref{ass:Ah2} yields that $|\vec\omega^h_i| > 0$ in 
$\overline I_i$, $i=1,2$.
It follows that $\vec v_{i}^h \in \Vhi$, $i=1,2$, defined by
\begin{equation} \label{eq:vh0}
\vec v^h_{i} = \vec\pi^h_i\left[\frac{\vec\omega^h_{i}}
{|\vec\omega^h_{i}|}\right],
\end{equation}
is well-defined if Assumption~\ref{ass:Ah2} holds. 
We also define $\mat Q^h_i \in [V^h_i]^{2 \times 2}$ by
\begin{equation} \label{eq:Qh}
\mat Q^h_i(q_{i,j}) = \begin{cases}
\mat\Id & q_{i,j} = \tfrac12\,,\\
\vec v^h_{i} \otimes \vec v^h_{i} & q_{i,j} \not= \tfrac12\,,
\end{cases}
\quad j = 0,\ldots,J_i\,,\ i=1,2\,.
\end{equation}
Later on we will describe the evolution of $\Gamma^h_i(t)$ through 
$\vec\pi^h_i[\mat Q^h_i\,[\vec X^h_i]_t]$, 
for $\partial_t\,(\vec X^h_1,\vec X^h_2) \in \xspace^h$. This will
allow tangential motion for interior nodes, which together with a
discretisation of \eqref{eq:varkappaweak} will lead to equidistribution in each
phase.
But crucially, we will specify the full velocity
at the junction point, $\rho = \tfrac12$. This is because 
the tangential motion of the junction
cannot be allowed to be arbitrary, as this would affect the evolution of
the two phases, and not just the evolution of their
parameterisations $\vec X^h_i$, $i=1,2$. 
A similar strategy has been pursued by the authors
in \citet[(4.7)]{axipwf} and in \citet[(4.8)]{pwfc0c1}. 

As the discrete analogue of (\ref{eq:varkappaweak}), we let 
$(\vec X^h_1,\vec X^h_2) \in \xspace^h$,
$\kappa^h_i \in V^h_i$ and $\vec{\rm m}^h_i \in \bR^2$ be such that
\begin{equation} \label{eq:sideh}
\left( \kappa^h_i\,\vec\nu^h_i, \vec\eta_i \,|[\vec X^h_i]_\rho|\right)^h
+ \left( \vec\tau^h_i, [\vec\eta_i]_\rho \right) = 
\vec{\rm m}^h_i\,.\,\vec\eta_i(\tfrac12)
\qquad \forall\ \vec\eta_i \in \yspace^h_i\,,\ i=1,2\,,
\end{equation}
where we recall (\ref{eq:tauh}).
In the case of a $C^1$--junction, it will turn out that \eqref{eq:sideh}
can influence the tangential motion of the junction in a way that only depends
on the discretisation parameters, rather than on the actual physics of the
problem. To avoid this from happening, we need to add more flexibility for the
tangential motion of the junction. In particular, on recalling \eqref{eq:tauh}, 
we amend \eqref{eq:sideh} to
\begin{align} \label{eq:sidehtang}
&
\left( \kappa^h_i\,\vec\nu^h_i, \vec\eta_i \,|[\vec X^h_i]_\rho|\right)^h
+ C_1\,\beta^h \left(\chi_{i,(2-i)\,J_i}\,[\vec X^h_i]_\rho, \vec\eta_i
\right)^h
+ \left( \vec\tau^h_i, [\vec\eta_i]_\rho \right) = 
\vec{\rm m}^h_i\,.\,\vec\eta_i(\tfrac12) \nonumber \\ & \hspace{9cm}
\qquad \forall\ \vec\eta_i \in \yspace^h_i\,,\ i=1,2\,,
\end{align}
where $\beta^h\in\bR$ is an additional degree of freedom, and where
we observe that $\chi_{i,(2-i)\,J_i}$ is the basis function of $V^h_i$ with
$\chi_{i,(2-i)\,J_i}(\tfrac12)=1$, $i=1,2$. 
The effect of the new term in \eqref{eq:sidehtang}, analogously to
\citet[(3.49)]{pwftj}, is to allow for an additional degree of freedom
avoiding that meshes are equidistributing across the junction, compare also
\citet[Remark~3.2]{pwftj}. 

We would like to mimic on the discrete level the procedure in
Section~\ref{sec:weak}. However, a naive discretisation of (\ref{eq:Lag}) will
not give a well-defined Lagrangian, since a discrete variant of
(\ref{eq:bclimit}) will in general not hold. To overcome the arising
singularity in a discretisation of (\ref{eq:Lag}), we now introduce
the following discrete approximation of $\varkappa_{\mathcal{S}_i}$, which will
be based on $\kappa^h_i$.
In particular, on recalling (\ref{eq:bclimit}) and (\ref{eq:omegah}), 
we introduce,
given $\vec X^h_i \in \xspace^h_i$ and $\kappa^h_i \in V^h_i$, the function 
$\doctorkappa^h_i(\vec X^h_i, \kappa^h_i) \in V^h_i$ such that
\begin{equation} \label{eq:calKh}
[\doctorkappa^h_i (\vec X^h_i,\kappa^h_i)](q_{i,j}) = \begin{cases}
\kappa^h_i(q_{i,j}) - 
\dfrac{\vec\omega_{i}^h(q_{i,j})\,.\,\vec\ek_1}{\vec X^h_i(q_{i,j})\,.\,\vec\ek_1}
& q_{i,j} \in \overline I_i \setminus \{0,1\}\,, \\
2\, \kappa^h_i(q_{i,j}) & q_{i,j} \in \{0,1\}\,,
\end{cases}
\end{equation}
compare with \citet[(4.11)]{axipwf}. 
This allows us to define the discrete analogue of the energy 
(\ref{eq:Ec0c1}) as
\begin{align} 
\widehat E^h(t) &= 
\pi\,\sum_{i=1}^2
\left(\alpha_i
\left[ \doctorkappa^h_i(\vec X^h_i, \kappa^h_i) - \spont_i \right]^2,  
\vec X^h_i\,.\,\vec\ek_1 \,|[\vec X^h_i]_\rho|\right)^h \nonumber \\
& \qquad
- 2\,\pi\,\sum_{i=1}^2 \alpha^G_i \,\vec{\rm m}^h_i\,.\,\vec\ek_1
+ \pi\,\varsigma\sum_{i=1}^2 \vec X^h_i(\tfrac12)\,.\,\vec\ek_1\,.
\label{eq:Eh}
\end{align}

\begin{rem} \label{rem:kappahpartial0}
We observe that the energy $\widehat E^h(t)$ does not depend
on the values $\kappa^h_1(0,t)$ and $\kappa^h_2(1,t)$.
We will thus fix these values to be zero from now on,
by seeking $\kappa^h_i \in W^h_i$, $i=1,2$. 
A welcome side effect of this procedure is that choosing
$\vec\eta_1 = \chi_{1,0}\,\vec\ek_2$ and
$\vec\eta_2 = \chi_{2,J_2}\,\vec\ek_2$ in \eqref{eq:sidehtang} yields that
\begin{equation} \label{eq:Xhq1q0}
(\vec X^h_1 (q_{1,1}) - \vec X^h_1 (q_{1,0}) ) \,.\,\vec\ek_2 = 
(\vec X^h_2 (q_{2,J_2}) - \vec X^h_2 (q_{2,J_2-1}) ) \,.\,\vec\ek_2 = 
0\,,
\end{equation}
which can be viewed as exact discretisations of the $90^\circ$ 
contact angle conditions \eqref{eq:bcbc}. 
\end{rem}

Similarly to (\ref{eq:Lag}), we define the discrete Lagrangian
\begin{align*} 
& \mathcal{L}^h((\vec X^h_i, \kappa^h_i, \vec{\rm m}^h_i, \vec Y^h_i)_{i=1}^2,
\beta^h,\vec\phi^h) \nonumber \\ & \quad =
\pi \sum_{i=1}^2 \left(\alpha_i 
\left[ \doctorkappa^h_i(\vec X^h_i, \kappa^h_i) - \spont_i \right]^2 , 
\vec X^h_i\,.\,\vec\ek_1 \,|[\vec X^h_i]_\rho|\right)^h 
+ \pi\,\varsigma\,\sum_{i=1}^2 \vec X^h_i(\tfrac12)\,.\,\vec\ek_1
\nonumber \\ & \qquad 
- \sum_{i=1}^2 \left( \kappa^h_i\,\vec\nu^h_i, \vec Y^h_i 
\,|[\vec X^h_i]_\rho|\right)^h
- C_1\,\beta^h \sum_{i=1}^2 
\left(\chi_{i,(2-i)\,J_i}\,[\vec X^h_i]_\rho, \vec Y^h_i \right)^h 
- \sum_{i=1}^2 \left( \vec\tau^h_i, [\vec Y^h_i]_\rho \right)
\nonumber \\ & \qquad 
 + \sum_{i=1}^2 \vec{\rm m}_i^h\,.\left(\vec Y^h_i(\tfrac12) -
2\,\pi\,\alpha^G_i\,\vec\ek_1\right)
+ C_1\,(\vec{\rm m}^h_1 + \vec{\rm m}^h_2)\,.\,\vec\phi^h \,,
\end{align*}
for the minimisation of the energy \eqref{eq:Eh} subject to the side 
constraint \eqref{eq:sidehtang} and a discrete variant of \eqref{eq:mC0C1}, 
where $(\vec X^h_1,\vec X^h_2) \in \xspace^h$, 
$\kappa^h_i \in \Whi$, $C_1\,\beta^h\in\bR$,
$\vec{\rm m}^h_i \in \bR^2$, $\vec Y^h_i \in \yspace^h_i$ and
$\vec\phi^h \in \bR^2$.

Taking variations $\vec\eta_i \in \yspace^h_i$ in $\vec Y^h_i$, and setting
$\left[\deldel{\vec Y^h_i}\, \mathcal{L}^h\right](\vec\eta_i) = 0$
we obtain (\ref{eq:sidehtang}).
Taking variations $\chi_i \in \Whi$ in $\kappa^h_i$ and setting
$\left[\deldel{\kappa^h_i}\, \mathcal{L}^h\right](\chi_i) = 0$ we obtain 
\begin{align} 
&2\,\pi\left( \vec X^h_i\,.\,\vec\ek_1 \,\left(\alpha_i\,[\doctorkappa^h_i(
\vec X^h_i,\kappa^h_i) - \spont_i ] \right),\chi_i
\,|[\vec X^h_i]_\rho|\right)^h
- \left( \vec Y^h_i, \chi_i\,\vec\nu^h_i \,|[\vec X^h_i]_\rho|\right)^h = 0
\nonumber \\ & \hspace{9cm}
\qquad \forall\ \chi_i \in \Whi\,, \ i = 1,2 \label{eq:kappahYh}
\end{align}
where we have recalled (\ref{eq:calKh}). 
Taking variations in $\vec{\rm m}^h_i \in \bR^2$, $i=1,2$, 
and setting them to zero, 
yields, similarly to (\ref{eq:yalphaG}), that
\begin{equation} \label{eq:mhy}
\vec Y^h_i(\tfrac12) = 2\,\pi\,\alpha^G_i\,\vec\ek_1 
- C_1\,\vec\phi^h\,,\ i = 1,2\,.
\end{equation}
Similarly, taking variations in $\vec\phi \in \bR^2$, 
and setting them to zero, yields
\begin{equation} \label{eq:mhC0C1}
C_1\,(\vec{\rm m}^h_1 + \vec{\rm m}^h_2) = \vec 0\,.
\end{equation}
Taking variations in $\beta^h\in\bR$, and setting them to zero, implies
\begin{align} \label{eq:betavariation}
C_1\,\sum_{i=1}^2 \left(\chi_{i,(2-i)\,J_i}\,[\vec X^h_i]_\rho, \vec Y^h_i
\right)^h & = 0 \nonumber \\ 
\quad\iff\quad
C_1\,\left[ 
(\vec X^h_1(q_{1,J_1}) - \vec X^h_1(q_{1,J_1-1}))\,.\,
\vec Y^h_1(\tfrac12) +
(\vec X^h_2(q_{2,1}) - \vec X^h_2(q_{2,0}))\,.\,
\vec Y^h_2(\tfrac12) \right] & = 0 \,.
\end{align}
 
Taking variations $\vec\chi = (\vec\chi_1,\vec\chi_2) \in \xspaceh$ in 
$\vec X^h = (\vec X^h_1,\vec X^h_2)$, and 
setting \linebreak 
$2\,\pi\,\sum_{i=1}^2 ((\vec X^h_i\,.\,\vec\ek_1)\,\mat Q^h_i\,
[\vec X^h_i]_t,\vec\chi_i\,|[\vec X^h_i]_\rho|)^h = 
- \left[\deldel{\vec X^h}\, \mathcal{L}^h\right](\vec\chi)$
we obtain  
\begin{align}
& 2\,\pi\,\sum_{i=1}^2 
\left((\vec X^h_i\,.\,\vec\ek_1)\,\mat Q^h_i\,[\vec X^h_i]_t,
\vec\chi_i\,|[\vec X^h_i]_\rho|\right)^h \nonumber \\ &\qquad = 
- \pi \sum_{i=1}^2 \left( \alpha_i
\left[ \doctorkappa^h_i(\vec X^h_i,\kappa^h_i) - \spont_i \right]^2,
\left[\deldel{\vec X^h}\,(\vec X^h_i\,.\,\vec\ek_1)\,
|[\vec X^h_i]_\rho|\right] (\vec\chi) \right)^h 
\nonumber \\ & \qquad \qquad
- 2\,\pi\,\sum_{i=1}^2 \alpha_i \left( 
\left[ \doctorkappa^h_i(\vec X^h_i,\kappa^h_i) - \spont_i \right], 
\left[\deldel{\vec X^h}\,\doctorkappa^h_i(\vec X^h_i,\kappa^h_i)\right]
(\vec\chi)\,(\vec X^h_i\,.\,\vec\ek_1)\,|[\vec X^h_i]_\rho| \right)^h 
\nonumber \\ & \qquad \qquad 
+ \sum_{i=1}^2 \left( \kappa^h_i\, \vec Y^h_i ,
\left[\deldel{\vec X^h}\,\vec\nu^h_i\,|[\vec X^h_i]_\rho|\right]
(\vec\chi)\right)^h
+ C_1\,\beta^h \sum_{i=1}^2
\left(\chi_{i,(2-i)\,J_i}\,[\vec\chi_i]_\rho, \vec Y^h_i \right)^h
\nonumber \\ & \qquad \qquad 
+ \sum_{i=1}^2 \left([\vec Y^h_i]_\rho , 
\left[\deldel{\vec X^h}\,\vec\tau^h_i\right](\vec\chi)\right) 
- \pi\,\varsigma\, \sum_{i=1}^2 \vec\chi_i(\tfrac12)\,.\,\vec\ek_1
\qquad \forall\ \vec\chi \in \xspaceh\,.
\label{eq:Pxth}
\end{align}
Choosing $\vec\chi = \vec X^h_t$ in (\ref{eq:Pxth}) yields
\begin{align}
& 2\,\pi\,\sum_{i=1}^2 
\left(\vec X^h_i\,.\,\vec\ek_1\,|\mat Q^h_i\,[\vec X^h_i]_t|^2,
|[\vec X^h_i]_\rho|\right)^h 
\nonumber \\ & \qquad = 
- \pi \,\sum_{i=1}^2 \left( 
\alpha_i \left[ \doctorkappa^h_i(\vec X^h_i,\kappa^h_i) - \spont_i \right]^2,
\left[(\vec X^h_i\,.\,\vec\ek_1)\,|[\vec X^h_i]_\rho|\right]_t \right)^h 
\nonumber \\ & \qquad \qquad 
- 2\,\pi\,\sum_{i=1}^2 \alpha_i \left( 
 \doctorkappa^h_i(\vec X^h_i,\kappa^h_i) - \spont_i , 
\left[\dfrac{\vec\omega_{i}^h\,.\,\vec\ek_1}{\vec X^h_i\,.\,\vec\ek_1}
\right]_t (\doctorZ^h_i - 2)
\,(\vec X^h_i\,.\,\vec\ek_1)\,|[\vec X^h_i]_\rho| \right)^h 
\nonumber \\ & \qquad \qquad 
+ \sum_{i=1}^2 
\left( \kappa^h_i\, \vec Y^h_i,
\left[\vec\nu^h_i\,|[\vec X^h_i]_\rho|\right]_t \right)^h
+ C_1\,\beta^h \sum_{i=1}^2 
\left(\chi_{i,(2-i)\,J_i}\,([\vec X^h_i]_\rho)_t, \vec Y^h_i \right)^h
\nonumber \\ & \qquad \qquad 
+ \sum_{i=1}^2 \left([\vec Y^h_i]_\rho , [\vec\tau^h_i]_t \right)  
- \pi\,\varsigma\, \sum_{i=1}^2 [\vec X^h_i]_t(\tfrac12)\,.\,\vec\ek_1 \,,
\label{eq:Pxtxth}
\end{align}
where we have defined $\doctorZ^h_i \in V^h_i$ such that
\begin{equation*} 
\doctorZ^h_i (q_{i,j}) = \begin{cases}
1 & q_{i,j} \in \overline I_i \setminus \{0,1\}\,, \\
2 & q_{i,j} \in \{0,1\}\,.
\end{cases}
\end{equation*}

Differentiating (\ref{eq:sidehtang}) with respect to $t$, and then choosing
$\vec\eta_i = \vec Y^h_i \in \yspace^h_i$ and noting \eqref{eq:betavariation}, 
yields that 
\begin{align}
&  \left( [\kappa^h_i]_t, 
\vec Y^h_i\,.\,\vec\nu^h_i\,|[\vec X^h_i]_\rho|\right)^h
+ \left( \kappa^h_i\,\vec Y^h_i, \left[\vec\nu^h_i\,|[\vec X^h_i]_\rho|
\right]_t\right)^h
+ C_1\,\beta^h 
\left(\chi_{i,(2-i)\,J_i}\,([\vec X^h_i]_\rho)_t, \vec Y^h_i \right)^h
\nonumber \\ & \qquad
+ \left([\vec\tau^h_i]_t,[\vec Y^h_i]_\rho\right)
= [\vec{\rm m}^h_i]_t\,.\,\vec Y^h_i(\tfrac12)\,,\ i = 1,2\,.
\label{eq:dtyyh}
\end{align}
It follows from (\ref{eq:dtyyh}), (\ref{eq:mhy}) and (\ref{eq:kappahYh}) 
with $\chi_i = [\kappa^h_i]_t \in W^h_i$ that
\begin{align}
&\left( \kappa^h_i\,\vec Y^h_i, \left[\vec\nu^h_i\,|[\vec X^h_i]_\rho|
\right]_t\right)^h
+ C_1\,\beta^h 
\left(\chi_{i,(2-i)\,J_i}\,([\vec X^h_i]_\rho)_t, \vec Y^h_i \right)^h
+ \left([\vec\tau^h_i]_t,[\vec Y^h_i]_\rho\right)
\nonumber \\ & \qquad 
= - \left( [\kappa^h_i]_t, 
\vec Y^h_i\,.\,\vec\nu^h_i\,|[\vec X^h_i]_\rho|\right)^h
+ 2\,\pi\, \alpha^G_i [\vec{\rm m}^h_i]_t\,.\,\vec\ek_1 
- C_1\,[\vec{\rm m}^h_i]_t\,.\,\vec\phi^h
\nonumber \\ & \qquad 
= -2\,\pi\,\alpha_i \left( \vec X^h_i\,.\,\vec\ek_1\, [\doctorkappa^h_i(
\vec X^h_i,\kappa^h_i) - \spont_i ], [\kappa^h_i]_t 
\,|[\vec X^h_i]_\rho| \right)^h 
\nonumber \\
& \qquad \qquad + 
2\,\pi\, \alpha^G_i [\vec{\rm m}^h_i]_t\,.\,\vec\ek_1 
- C_1\,[\vec{\rm m}^h_i]_t\,.\,\vec\phi^h\,,\ i=1,2\,.
\label{eq:mykappah}
\end{align}
Combining (\ref{eq:Pxtxth}) and (\ref{eq:mykappah}) 
yields, on recalling \eqref{eq:mhC0C1} and (\ref{eq:Eh}), that
\begin{align}
& 2\,\pi\,\sum_{i=1}^2 
\left(\vec X^h_i\,.\,\vec\ek_1\,|\mat Q^h_i\,[\vec X^h_i]_t|^2,
|[\vec X^h_i]_\rho|\right)^h 
\nonumber \\ & \qquad 
= - \pi\,\sum_{i=1}^2 \left( \alpha_i
\left[ \doctorkappa^h_i(\vec X^h_i,\kappa^h_i) - \spont_i \right]^2,
\left[(\vec X^h_i\,.\,\vec\ek_1)\,|[\vec X^h_i]_\rho|\right]_t \right)^h 
\nonumber \\ & \qquad \qquad 
- 2\,\pi\,\sum_{i=1}^2 \alpha_i \left( 
 \doctorkappa^h_i(\vec X^h_i,\kappa^h_i) - \spont_i , 
\left[\dfrac{\vec\omega_{i}^h\,.\,\vec\ek_1}{\vec X^h_i\,.\,\vec\ek_1}
\right]_t (\doctorZ^h_i - 2)
\,(\vec X^h_i\,.\,\vec\ek_1)\,|[\vec X^h_i]_\rho| \right)^h 
\nonumber \\ & \qquad \qquad 
-2\,\pi\,\sum_{i=1}^2 \alpha_i
\left( \vec X^h_i\,.\,\vec\ek_1\,[\doctorkappa^h_i(
\vec X^h_i,\kappa^h_i) - \spont_i ] , 
[\kappa^h_i]_t \,|[\vec X^h_i]_\rho| \right)^h
\nonumber \\ & \qquad \qquad  
+ 2\,\pi\,\sum_{i=1}^2 \alpha^G_i\,[\vec{\rm m}^h_i]_t\,.\,\vec\ek_1 
- \pi\,\varsigma\,\sum_{i=1}^2 [\vec X^h_i]_t(\tfrac12)\,.\,\vec\ek_1 
\nonumber \\ & \qquad 
= - \ddt \,\widehat E^h(t)\, .
\label{eq:Pxtstabh}
\end{align}

We now return to (\ref{eq:Pxth}) which, similarly to \citet[(4.22)]{axipwf}, 
can be rewritten as
\begin{align}
& 2\,\pi\,\sum_{i=1}^2 
\left((\vec X^h_i\,.\,\vec\ek_1)\,\mat Q^h_i\,[\vec X^h_i]_t,
\vec\chi_i\,|[\vec X^h_i]_\rho|\right)^h = \nonumber \\ & \quad 
- \pi\,\sum_{i=1}^2 \left( \alpha_i
\left[ \doctorkappa^h_i(\vec X^h_i,\kappa^h_i) - \spont_i \right]^2 ,
\vec\chi_i\,.\,\vec\ek_1\,|[\vec X^h_i]_\rho| 
+ (\vec X^h_i\,.\,\vec\ek_1)\,\vec\tau^h_i\,.\,[\vec\chi_i]_\rho \right)^h
\nonumber \\ & \quad 
+ 2\,\pi\,\sum_{i=1}^2\alpha_i 
\left( \left[\doctorkappa^h_i(\vec X^h_i,\kappa^h_i) - \spont_i\right] 
 (\doctorZ^h_i - 2), 
\frac{\vec\omega_{i}^h\,.\,\vec\ek_1}{\vec X^h_i\,.\,\vec\ek_1}\,
\vec\chi_i\,.\,\vec\ek_1\,|[\vec X^h_i]_\rho| \right)^h 
\nonumber \\ & \quad 
+ 2\,\pi\,\,\sum_{i=1}^2 \alpha_i
\left( \left[\doctorkappa^h_i(\vec X^h_i,\kappa^h_i) - \spont_i\right]
(\doctorZ^h_i - 2)\,\vec\ek_1, 
(\vec\nu^h_i\,.\,[\vec\chi_i]_\rho) \,\vec\tau^h_i + 
(\vec\tau^h_i\,.\,[\vec\chi_i]_\rho) \,(\vec\omega_{i}^h-\vec\nu^h_i) 
\right)^h 
\nonumber \\ & \quad 
+ C_1\,\beta^h \sum_{i=1}^2
\left(\chi_{i,(2-i)\,J_i}\,[\vec\chi_i]_\rho, \vec Y^h_i \right)^h
- \sum_{i=1}^2 \left( \kappa^h_i\,\vec Y^h_i,
[\vec\chi_i]_\rho^\perp \right)^h
\nonumber \\ & \quad 
+ \sum_{i=1}^2
\left([\vec Y^h_i]_\rho\,.\,\vec\nu^h_i , 
[\vec\chi_i]_\rho\,.\,\vec\nu^h_i\,|[\vec X^h_i]_\rho|^{-1}\right) 
- \pi\,\varsigma\,\sum_{i=1}^2 \vec\chi_i(\tfrac12)\,.\,\vec\ek_1
\qquad \forall\ \vec\chi \in \xspaceh\,.
\label{eq:Pxt2h}
\end{align}
Combining (\ref{eq:Pxt2h}), (\ref{eq:kappahYh}), (\ref{eq:sidehtang}),
(\ref{eq:mhy}) and (\ref{eq:mhC0C1}), our semidiscrete approximation is given, 
on noting $\vec a \,.\,\vec b^\perp= -\vec a^\perp .\,\vec b$
and (\ref{eq:tauh}), as follows. 

\noindent
$(\BGNpwf^h)^{h}$
Let $\vec X^h(\cdot,0) \in \xspace^h$ be given.
Then, for $t \in (0,T]$ find
$\vec X^h(\cdot,t) \in \xspace^h$, 
$(\kappa^h_i(\cdot,t),\vec Y^h_i(\cdot,t),$ $\vec{\rm m}^h_i(t))
\in W^h_i \times \yspace^h_i \times \bR^2$, $i=1,2$, 
$C_1\,\beta^h(t) \in \bR$ and $C_1\,\vec\phi^h(t) \in \bR^2$ such that
\begin{subequations} \label{eq:sd}
\begin{align}
& 2\,\pi\,\sum_{i=1}^2\left((\vec X^h_i\,.\,\vec\ek_1)\,\mat Q^h_i\,
[\vec X^h_i]_t,
\vec\chi_i\,|[\vec X^h_i]_\rho|\right)^h 
- \sum_{i=1}^2 
\left([\vec Y^h_i]_\rho\,.\,\vec\nu^h_i , [\vec\chi_i]_\rho\,.\,\vec\nu^h_i\,
|[\vec X^h_i]_\rho|^{-1} \right) = 
\nonumber \\ & \quad 
- \pi\,\sum_{i=1}^2 \left( 
\alpha_i\,\left[ \doctorkappa^h_i(\vec X^h_i,\kappa^h_i) - \spont_i \right]^2,
\vec\chi_i\,.\,\vec\ek_1\,|[\vec X^h_i]_\rho| 
+ (\vec X^h_i\,.\,\vec\ek_1)\,\vec\tau^h_i\,.\,[\vec\chi_i]_\rho \right)^h
\nonumber \\ & \quad 
+ 2\,\pi\,\sum_{i=1}^2 \alpha_i
\left( \left[\doctorkappa^h_i(\vec X^h_i,\kappa^h_i) - \spont_i\right] 
 (\doctorZ^h_i - 2), 
\frac{\vec\omega_{i}^h\,.\,\vec\ek_1}{\vec X^h\,.\,\vec\ek_1}\,
\vec\chi_i\,.\,\vec\ek_1 \,|[\vec X^h_i]_\rho| \right)^h 
\nonumber \\ & \quad 
+ 2\,\pi\,\sum_{i=1}^2 \alpha_i 
\left( \left[\doctorkappa^h_i(\vec X^h_i,\kappa^h_i) - \spont_i\right]
(\doctorZ^h_i - 2)\,\vec\ek_1, 
(\vec\nu^h_i\,.\,[\vec\chi_i]_\rho) \,\vec\tau^h_i 
+ (\vec\tau^h_i\,.\,[\vec\chi_i]_\rho) \,
(\vec\omega_{i}^h-\vec\nu^h_i) \right)^h 
\nonumber \\ & \quad 
+ \sum_{i=1}^2 \left( \kappa^h_i\,(\vec Y^h_i)^\perp,
[\vec\chi_i]_\rho \right)^h 
+ C_1\,\beta^h \sum_{i=1}^2 
\left(\chi_{i,(2-i)\,J_i}\,[\vec\chi_i]_\rho, \vec Y^h_i \right)^h
- \pi\,\varsigma\,\sum_{i=1}^2 \vec\chi_i(\tfrac12)\,.\,\vec\ek_1
\nonumber \\ & \hspace{11cm}
\qquad \forall\ \vec\chi \in \xspaceh\,, \label{eq:sda} \\
& 
2\,\pi\sum_{i=1}^2
\left( \vec X^h_i\,.\,\vec\ek_1 \left( \alpha_i\,[\doctorkappa^h_i(
\vec X^h_i,\kappa^h_i) - \spont_i ]\right)
,\chi_i \,|[\vec X^h_i]_\rho|\right)^h
- \sum_{i=1}^2
\left( \vec Y^h_i, \chi_i\,\vec\nu^h_i \,|[\vec X^h_i]_\rho|\right)^h = 0
\nonumber \\
& \hspace{11cm} \qquad \forall\ \chi \in W^h\,,\label{eq:sdb} \\
&\left( \kappa^h_i\,\vec\nu^h_i, \vec\eta_i \,|[\vec X^h_i]_\rho|\right)^h
+ C_1\,\beta^h \left(\chi_{i,(2-i)\,J_i}\,[\vec X^h_i]_\rho, \vec\eta_i
\right)^h
+ \left( [\vec X^h_i]_\rho, [\vec\eta_i]_\rho\,|[\vec X^h_i]_\rho|^{-1} \right)
 = \vec{\rm m}^h_i\,.\,\vec\eta_i(\tfrac12)
\nonumber \\ & \hspace{9cm}
 \qquad \forall\ \vec\eta_i \in \yspace^h_i\,,\ i=1,2\,, \label{eq:sdc} \\
&- 2\,\pi\,\alpha^G_i\,\vec\ek_1 + \vec Y^h_i(\tfrac12) + C_1\,\vec\phi^h 
= \vec 0 \,,\ i = 1,2\,,\label{eq:sdd} \\
& 
C_1\,\sum_{i=1}^2 \left(\chi_{i,(2-i)\,J_i}\,[\vec X^h_i]_\rho, \vec Y^h_i
\right)^h = 0\,, \label{eq:sde} \\ 
& C_1\,(\vec{\rm m}_1^h + \vec{\rm m}_2^h) = \vec 0\,. \label{eq:sdf} 
\end{align}
\end{subequations}

\begin{thm} \label{thm:stab}
Let {\rm Assumption~\ref{ass:Ah2}} be satisfied and 
let $(\vec X^h(t),\kappa^h(t),\vec Y^h(t),\vec{\rm m}^h(t),$ \linebreak
$C_1\,\beta^h(t),\vec\phi^h(t))_{t\in (0,T]}$ 
be a solution to \mbox{\rm (\ref{eq:sd})}. 
Then the solution satisfies the stability bound
\[
\ddt \,\widehat E^h(t)
+ 2\,\pi\,\sum_{i=1}^2 \left(\vec X^h_i\,.\,\vec\ek_1\,
|\mat Q^h_i\,[\vec X^h_i]_t|^2, |[\vec X^h_i]_\rho|\right)^h=0\,. 
\]
\end{thm}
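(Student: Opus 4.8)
\emph{Proof proposal.} The assertion is exactly the identity \eqref{eq:Pxtstabh}, which already surfaced in the course of constructing the scheme from the discrete Lagrangian; the cleanest route is therefore to re-run the computation \eqref{eq:Pxth}--\eqref{eq:Pxtstabh} directly from the stated scheme. Recall that, up to the algebraic rearrangement that turned \eqref{eq:Pxth} into \eqref{eq:Pxt2h}, equations \eqref{eq:sda}--\eqref{eq:sdf} are precisely the Euler--Lagrange equations of $\mathcal{L}^h$ together with the prescribed velocity law, so in particular \eqref{eq:sda} is equivalent to \eqref{eq:Pxth}; under {\rm Assumption~\ref{ass:Ah2}} all the quantities $\vec\tau^h_i$, $\vec\nu^h_i$, $\vec\omega^h_i$, $\vec v^h_i$, $\mat Q^h_i$ occurring below are well defined.

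First I would test \eqref{eq:Pxth} with $\vec\chi = \partial_t\vec X^h$. This is admissible because $\xspaceh$ is a linear space and the constraints $\vec X^h_i\,.\,\vec\ek_1 = 0$ on $\partial I_i\setminus\{\tfrac12\}$ and $\vec X^h_1(\tfrac12) = \vec X^h_2(\tfrac12)$ hold for all $t$, hence are inherited by $\partial_t\vec X^h$. On the left-hand side one uses that, at every node, $\mat Q^h_i$ is an orthogonal projection (either $\mat\Id$ or $\vec v^h_i\otimes\vec v^h_i$ with $|\vec v^h_i|=1$), so that $(\mat Q^h_i\vec a)\,.\,\vec a = |\mat Q^h_i\vec a|^2$; since the mass-lumped form $(\cdot,\cdot)^h$ sees only nodal values, the first term becomes $2\,\pi\sum_i(\vec X^h_i\,.\,\vec\ek_1\,|\mat Q^h_i[\vec X^h_i]_t|^2,|[\vec X^h_i]_\rho|)^h$, i.e.\ the dissipation term in the claim. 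This reproduces \eqref{eq:Pxtxth}.

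Next I would differentiate the side constraint \eqref{eq:sdc} in $t$ (freezing a test function and then inserting $\vec\eta_i = \vec Y^h_i$), differentiate \eqref{eq:sdd} in $t$, and test \eqref{eq:sdb} with $\chi_i = \partial_t\kappa^h_i \in W^h_i$, which is admissible since $\kappa^h_i \in W^h_i$ for all $t$. Summing over $i$ and invoking \eqref{eq:sde} (to discard the $[\beta^h]_t$ contribution) and \eqref{eq:sdf} (which gives $\ddt(C_1\sum_i\vec{\rm m}^h_i) = \vec 0$, so that the $[\vec{\rm m}^h_i]_t\,.\,\vec\phi^h$ terms drop), one arrives at \eqref{eq:mykappah}. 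Adding \eqref{eq:mykappah} to the right-hand side of \eqref{eq:Pxtxth} and matching terms via the product rule for the bilinear, time-independent form $(\cdot,\cdot)^h$ — the ingredients being $[(\vec X^h_i\,.\,\vec\ek_1)\,|[\vec X^h_i]_\rho|]_t = ([\vec X^h_i]_t\,.\,\vec\ek_1)\,|[\vec X^h_i]_\rho| + (\vec X^h_i\,.\,\vec\ek_1)\,\vec\tau^h_i\,.\,[\vec X^h_i]_{\rho t}$, the identity $\vec\nu^h_i\,|[\vec X^h_i]_\rho| = -([\vec X^h_i]_\rho)^\perp$ (so that $[\vec\nu^h_i\,|[\vec X^h_i]_\rho|]_t$ pairs with the $\kappa^h_i\,(\vec Y^h_i)^\perp$ term), and the chain rule $[\doctorkappa^h_i]_t = [\kappa^h_i]_t + [\tfrac{\vec\omega^h_i\,.\,\vec\ek_1}{\vec X^h_i\,.\,\vec\ek_1}]_t\,(\doctorZ^h_i - 2)$ — identifies the whole right-hand side as $-\ddt\,\widehat E^h(t)$. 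That is \eqref{eq:Pxtstabh}, which rearranges to the stated bound.

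The genuinely delicate part is this final term-by-term matching near the axis of rotation and at the junction. Near $\rho\in\{0,1\}$ one must exploit that $\doctorkappa^h_i = 2\,\kappa^h_i$ there while $\kappa^h_i$, and hence $\partial_t\kappa^h_i$, vanishes because $\kappa^h_i\in W^h_i$; this is exactly what makes the $(\doctorZ^h_i-2)$-weighting consistent between the variation of the energy \eqref{eq:Eh} and the variation of the constraint \eqref{eq:sdc}. At $\rho = \tfrac12$ one has to reconcile the line-tension contribution and the $-2\,\pi\,\alpha^G_i\,\vec{\rm m}^h_i\,.\,\vec\ek_1$ contribution in $\widehat E^h$ with the boundary terms generated in the two steps above, for which \eqref{eq:sdd} (i.e.\ $\vec Y^h_i(\tfrac12) = 2\,\pi\,\alpha^G_i\,\vec\ek_1 - C_1\,\vec\phi^h$) and \eqref{eq:sdf} are essential, and which has to be verified for both the $C^0$-- and the $C^1$--junction ($C_1 = 0$ and $C_1 = 1$); everything else is routine.
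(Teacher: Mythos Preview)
Your proposal is correct and follows essentially the same approach as the paper. The paper's proof simply observes that \eqref{eq:sd} is a rewrite of \eqref{eq:Pxth}, \eqref{eq:kappahYh}, \eqref{eq:sidehtang}, \eqref{eq:mhy} and \eqref{eq:mhC0C1}, and then invokes the chain \eqref{eq:Pxtxth}--\eqref{eq:Pxtstabh} already established in the derivation; you spell out the same chain in more detail, including the role of \eqref{eq:sde} in eliminating the $[\beta^h]_t$ contribution and of \eqref{eq:sdf} in cancelling the $[\vec{\rm m}^h_i]_t\,.\,\vec\phi^h$ terms.
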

\begin{proof}
The desired result follows as (\ref{eq:sd}) is just a rewrite of 
(\ref{eq:Pxth}), (\ref{eq:kappahYh}), (\ref{eq:sidehtang}), \eqref{eq:mhy} 
and (\ref{eq:mhC0C1}), and then noting (\ref{eq:Pxtxth})--(\ref{eq:Pxtstabh}). 
\end{proof}

\begin{rem} \label{rem:equid}
We note that on choosing 
$\vec\eta_i = \chi_{i,j}\,[\vec\omega_{i}^h(q_{i,j})]^\perp$, for 
$j \in \{1,\ldots,J_i-1\}$ so that $\vec\eta_i \in \yspace^h_i$
with $\vec\eta_i(\tfrac12)=\vec 0$, 
in {\rm (\ref{eq:sdc})}, $i=1,2$, we obtain that
\begin{align} \label{eq:equidphases}
|\vec X^h_i(q_{i,j}) - \vec X^h_i(q_{i,j-1})| & = 
|\vec X^h_i(q_{i,j+1}) - \vec X^h_i(q_{i,j})| \nonumber \\
\ \text{or} \quad
\vec X^h_i(q_{i,j}) - \vec X^h_i(q_{i,j-1}) & \parallel
\vec X^h_i(q_{i,j+1}) - \vec X^h_i(q_{i,j})
\,,\quad j=1,\ldots,J_i-1\,,\ i=1,2\,. 
\end{align}
See {\rm \citet[Remark 2.4]{triplej}} 
for details. Hence the curves $\Gamma^h_i(t)$, $i=1,2$,
will each be equidistributed where-ever two neighbouring 
elements are not parallel.

We now highlight why the term involving $\beta^h$ is crucial in
\eqref{eq:sidehtang} in order to avoid undesirable tangential motion of the
junction when $C_1=1$. To this end, let us assume for now that $\beta^h=0$. 
Then we can choose
$\vec\eta_1 = \chi_{1,J_1}\,\vec\mu^h_2$ and
$\vec\eta_2 = \chi_{2,0}\,\vec\mu^h_2$
in \eqref{eq:sdc}, where we note that $\vec\mu^h_2 = -
[\vec\omega_2^h(q_{2,0})]^\perp$ is the true conormal to $\Gamma^h_2(t)$ at
$\vec X^h_2(\tfrac12,t)$.
On noting
$\vec\eta_1(\tfrac12) =  \vec\eta_2(\tfrac12) = \vec\mu^h_2$
and \eqref{eq:sdf} it follows that
\begin{align}
& \left( \kappa^h_1\,\vec\nu^h_1, \chi_{1,J_1}\,\vec\mu^h_2
\,|[\vec X^h_1]_\rho|\right)^h
+ \left( \vec\tau^h_1, [\chi_{1,J_1}]_\rho\, \vec\mu^h_2 \right)
+ \left( \vec\tau^h_2, [\chi_{2,0}]_\rho\, \vec\mu^h_2 \right)
= 0 \nonumber \\ \Rightarrow &
\left( \kappa^h_1, \chi_{1,J_1} \,|[\vec X^h_1]_\rho|\right)^h 
\vec\nu^h_1(\tfrac12)\,.\,\vec\mu^h_2
+ \left( 1 , [\chi_{1,J_1}]_\rho \right) \vec\tau^h_1(\tfrac12)\,.\,\vec\mu^h_2
- \left( 1, [\chi_{2,0}]_\rho \right) = 0\,.
\label{eq:C1equid}
\end{align}
Now, similarly to \citet[Remark~3.2]{pwftj}, it can be argued that
\eqref{eq:C1equid}, enforces some tangential motion of the junction that is
determined by the discretisation. In particular, in the case that the two
elements meeting at the junction are parallel, which implies that
$\vec\nu^h_1(\tfrac12)\,.\,\vec\mu^h_2 = 0$ and
$\vec\tau^h_1(\tfrac12)\,.\,\vec\mu^h_2 = -1$, then \eqref{eq:C1equid} 
enforces
\begin{equation} \label{eq:C1equid2}
\left( 1 , [\chi_{1,J_1}]_\rho \right)
+ \left( 1, [\chi_{2,0}]_\rho \right) = 0\,,
\end{equation}
which means that the two elements next to the $C^1$--junction will have 
the same length. Together with \eqref{eq:equidphases} this would imply a global
equidistribution property, across the two phases. Even though in general
\eqref{eq:C1equid2} will not hold exactly, in practice some undesirable
tangential motion can be expected, and is observed in our numerical
experiments. It is for this reason that we only consider the scheme 
\eqref{eq:sd} as stated.
\end{rem}

\begin{rem} \label{rem:C0h}
In accordance with {\rm Remark~\ref{rem:C0}}, it is possible to eliminate the
discrete conormal vectors $\vec{\rm m}^h_i$, $i=1,2$, 
as well as $\vec\phi^h$, from \eqref{eq:sd}. In particular, 
$(\vec X^h(t),\kappa^h(t),$ $\vec Y^h(t), C_1\,\beta^h(t))_{t\in (0,T]}$ form 
part of a solution to \eqref{eq:sd} if and only if
$\vec X^h(t) \in \xspace^h$, $\kappa^h(t) \in W^h$,
$\vec Y^h(t) \in \yspace^h$ and $C_1\,\beta^h(t) \in \bR$ with
\[
\begin{cases}
\vec Y^h_i(\tfrac12,t) = 2\,\pi\,\alpha^G_i\,\vec\ek_1\,,\ i=1,2
& C_1 = 0\,,\\
\vec Y^h_1(\tfrac12,t) - \vec Y^h_2(\tfrac12,t) = 2\,\pi\,[\alpha^G_1 -
\alpha^G_2]\,\vec\ek_1 \quad \text{and}\ \eqref{eq:betavariation} 
& C_1 = 1\,,
\end{cases}
\]
are such that \eqref{eq:sda}, \eqref{eq:sdb} and
\begin{align*} 
& \sum_{i=1}^2
\left( \kappa^h_i\,\vec\nu^h_i, \vec\eta_i \,|[\vec X^h_i]_\rho|\right)^h
+ C_1\,\beta^h\,\sum_{i=1}^2 
\left(\chi_{i,(2-i)\,J_i}\,[\vec X^h_i]_\rho, \vec\eta_i
\right)^h
+ \sum_{i=1}^2
\left( [\vec X^h_i]_\rho, [\vec\eta_i]_\rho\,|[\vec X^h_i]_\rho|^{-1} 
\right) \nonumber \\ & \hspace{8cm}
= 0
\quad \forall\ \vec\eta \in 
\begin{cases} 
\Vhycztest & C_1 = 0\,,\\
\Vhycotest & C_1 = 1\,,
\end{cases} 
\end{align*}
hold.
\end{rem}

\subsection{Conserved flows} \label{sec:new51}

We rewrite \eqref{eq:sda} as
\begin{align*} 
& 2\,\pi\,\sum_{i=1}^2\left((\vec X^h_i\,.\,\vec\ek_1)\,\mat Q^h_i\,
[\vec X^h_i]_t,
\vec\chi_i\,|[\vec X^h_i]_\rho|\right)^h 
-\sum_{i=1}^2
 \left([\vec Y^h_i]_\rho\,.\,\vec\nu^h_i , [\vec\chi_i]_\rho\,.\,\vec\nu^h_i\,
|[\vec X^h_i]_\rho|^{-1} \right) 
\nonumber \\ & \qquad 
= \sum_{i=1}^2 \left( \vec f^h_i, \vec\chi_i \,|[\vec X^h_i]_\rho|\right)^h 
\quad  \forall\ \vec\chi \in \xspaceh\,.
\end{align*}
Then the natural generalisation of $(\BGNpwf^h)^{h}$, \eqref{eq:sd}, that
approximates the weak formulation \eqref{eq:weak4LMa}, 
\eqref{eq:weak3b}--\eqref{eq:weak3e} and \eqref{eq:xsideSAV} is given by 
\eqref{eq:sd}, with \eqref{eq:sda} replaced by
\begin{align}
& 2\,\pi\,\sum_{i=1}^2\left((\vec X^h_i\,.\,\vec\ek_1)\,\mat Q^h_i\,
[\vec X^h_i]_t,
\vec\chi_i\,|[\vec X^h_i]_\rho|\right)^h 
- \sum_{i=1}^2
\left([\vec Y^h_i]_\rho\,.\,\vec\nu^h_i , [\vec\chi_i]_\rho\,.\,\vec\nu^h_i\,
|[\vec X^h_i]_\rho|^{-1} \right) 
\nonumber \\ & \qquad 
= \sum_{i=1}^2 \left( \vec f^h_i, \vec\chi_i \,|[\vec X^h_i]_\rho|\right)^h 
- 2\,\pi\,\sum_{i=1}^2 \lambda^h_{A,i} 
\left[
\left( \vec\ek_1, \vec\chi_i\,|[\vec X^h_i]_\rho|\right) 
+ \left( (\vec X^h_i\,.\,\vec\ek_1)\,\vec\tau^h_i
, [\vec\chi_i]_\rho\right) \right]
 \nonumber \\ & \hspace{5cm}
- 2\,\pi\,\lambda_V^h \sum_{i=1}^2 \left((\vec X^h_i\,.\,\vec\ek_1)\, 
\vec\nu_i^h, \vec\chi_i \,|[\vec X^h_i]_\rho|\right)
 \qquad \forall\ \vec\chi \in \xspaceh\,, \label{eq:Q0hXtLMa}
\end{align}
where $(\lambda_{A,1}^h(t), \lambda_{A,2}^h(t), \lambda_V^h(t))^T \in \bR^3$ 
are such that
\begin{equation} \label{eq:sdwfb}
A_i(\vec X^h(t)) = A_i(\vec X^h(0))\,,\ i=1,2\,,\quad\text{and}\quad
V(\vec X^h(t)) = V(\vec X^h(0))\,.
\end{equation}
Here, on recalling \eqref{eq:Area} and \eqref{eq:Volume}, we note that
$A_i(\vec X^h(t))$ denotes the surface area of $\mathcal{S}^h_i(t)$,
where, similarly to (\ref{eq:SGamma}), we set
\begin{equation*} 
\mathcal{S}^h_i(t) = \bigcup_{\rho \in \overline{I_i}} 
\Pi_2^3(\vec X^h_i(\rho,t))\,,\ i=1,2\,.
\end{equation*}
Moreover, $V(\vec X^h(t))$ is the volume of the domain $\Omega^h(t)$ with
$\partial\Omega^h(t) = \cup_{i=1}^2 \mathcal{S}^h_i(t)$. We remark that
\begin{equation} \label{eq:Areah}
A_i(\vec Z^h) = 2\,\pi\left(\vec Z^h_i\,.\,\vec\ek_1 ,
|[\vec Z^h_i]_\rho|\right) \quad \vec Z^h \in \xspace^h
\end{equation}
and
\begin{equation}  \label{eq:Volumeh}
V(\vec Z^h) = -\pi \sum_{i=1}^2 \left( (\vec Z^h_i\,.\,\vec\ek_1)^2, 
[[\vec Z^h_i]_\rho]^\perp\,.\,\vec\ek_1\right) \quad 
\vec Z^h \in \xspace^h\,,
\end{equation}
recall \eqref{eq:Area}, \eqref{eq:Volume} and \eqref{eq:tauh}.

\begin{thm} \label{thm:stabC}
Let {\rm Assumption~\ref{ass:Ah2}} be satisfied and 
let $(\vec X^h(t),\kappa^h(t),\vec Y^h(t),\vec{\rm m}^h(t),$ \linebreak 
$C_1\,\beta^h(t),\vec\phi^h(t),
\lambda_{A,1}(t),\lambda_{A,2}(t),\lambda_V(t))_{t\in (0,T]}$ 
be a solution to \eqref{eq:Q0hXtLMa}, 
\eqref{eq:sdb}, \eqref{eq:sdc}, \eqref{eq:sdwfb}.
Then the solution satisfies the stability bound
\[
\ddt \,\widehat E^h(t)
+ 2\,\pi\,\sum_{i=1}^2 \left(\vec X^h_i\,.\,\vec\ek_1\,
|\mat Q^h_i\,[\vec X^h_i]_t|^2, |[\vec X^h_i]_\rho|\right)^h=0\,. 
\]
\end{thm}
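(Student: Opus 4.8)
The plan is to reduce Theorem~\ref{thm:stabC} to the proof of Theorem~\ref{thm:stab}, by showing that the extra Lagrange-multiplier terms in \eqref{eq:Q0hXtLMa} are invisible to the energy-dissipation computation. Recall that the system of Theorem~\ref{thm:stabC} is obtained from \eqref{eq:sd} by replacing \eqref{eq:sda} with \eqref{eq:Q0hXtLMa} and adjoining the constraints \eqref{eq:sdwfb}, and that \eqref{eq:Q0hXtLMa} differs from \eqref{eq:sda} only through the additional right-hand side contribution
\begin{align*}
& - 2\,\pi\,\sum_{i=1}^2 \lambda^h_{A,i}
\left[
\left( \vec\ek_1, \vec\chi_i\,|[\vec X^h_i]_\rho|\right)
+ \left( (\vec X^h_i\,.\,\vec\ek_1)\,\vec\tau^h_i, [\vec\chi_i]_\rho\right) \right] \\
& \qquad - 2\,\pi\,\lambda_V^h \sum_{i=1}^2 \left((\vec X^h_i\,.\,\vec\ek_1)\,\vec\nu_i^h, \vec\chi_i \,|[\vec X^h_i]_\rho|\right).
\end{align*}
The first step is to evaluate this expression at the admissible test function $\vec\chi = \vec X^h_t \in \xspaceh$ (admissible since the affine constraints defining $\xspaceh$ are preserved under $\partial_t$) and to recognise it as $-\sum_{i=1}^2 \lambda^h_{A,i}\,\ddt\,A_i(\vec X^h(t)) - \lambda^h_V\,\ddt\,V(\vec X^h(t))$.

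For the area part this is immediate: differentiating \eqref{eq:Areah} in time and using $[\vec X^h_i]_\rho = |[\vec X^h_i]_\rho|\,\vec\tau^h_i$ from \eqref{eq:tauh} yields
\[
\ddt\,A_i(\vec X^h(t)) = 2\,\pi\left[\left(\vec\ek_1, [\vec X^h_i]_t\,|[\vec X^h_i]_\rho|\right) + \left((\vec X^h_i\,.\,\vec\ek_1)\,\vec\tau^h_i, ([\vec X^h_i]_t)_\rho\right)\right],
\]
which is precisely $2\,\pi$ times the bracket multiplying $\lambda^h_{A,i}$ above, evaluated at $\vec\chi_i = [\vec X^h_i]_t$. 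For the volume part one differentiates \eqref{eq:Volumeh} in time and uses the rotation identity $[[\vec X^h_i]_\rho]^\perp = -|[\vec X^h_i]_\rho|\,\vec\nu^h_i$ (from \eqref{eq:tauh}) together with an integration by parts in $\rho$, legitimate because $\vec X^h_i$ is piecewise linear so that $[\vec X^h_i]_\rho$ is piecewise constant; this reproduces the discrete analogue of \eqref{eq:dVdt},
\[
\ddt\,V(\vec X^h(t)) = 2\,\pi\,\sum_{i=1}^2 \left((\vec X^h_i\,.\,\vec\ek_1)\,\vec\nu_i^h, [\vec X^h_i]_t\,|[\vec X^h_i]_\rho|\right),
\]
cf.\ \citet[(3.10)]{axisd}. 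This matches $2\,\pi$ times the term multiplied by $\lambda_V^h$.

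The second step invokes the constraints \eqref{eq:sdwfb}: along a solution the maps $t\mapsto A_i(\vec X^h(t))$ and $t\mapsto V(\vec X^h(t))$ are constant, so their time derivatives vanish, and hence the whole Lagrange-multiplier contribution in \eqref{eq:Q0hXtLMa} vanishes once $\vec\chi = \vec X^h_t$. Consequently, testing \eqref{eq:Q0hXtLMa} with $\vec\chi = \vec X^h_t$ produces exactly the same identity as testing \eqref{eq:sda} with $\vec\chi = \vec X^h_t$ does for the scheme \eqref{eq:sd}, namely \eqref{eq:Pxtxth}. Since the remaining equations \eqref{eq:sdb}--\eqref{eq:sdf} are carried over unchanged, the remainder of the argument of Theorem~\ref{thm:stab} applies verbatim: differentiate \eqref{eq:sidehtang} in time to obtain \eqref{eq:dtyyh}, combine with \eqref{eq:mhy} and \eqref{eq:kappahYh} (tested with $\chi_i = [\kappa^h_i]_t \in W^h_i$) to obtain \eqref{eq:mykappah}, and add this to \eqref{eq:Pxtxth}, using \eqref{eq:mhC0C1} and \eqref{eq:Eh}, to arrive at \eqref{eq:Pxtstabh}, which is precisely the asserted stability bound.

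The only part that is more than bookkeeping is the time-derivative formula for the discrete volume functional $V(\vec X^h(t))$ above; this is a minor variant of the smooth computation behind \eqref{eq:dVdt}, and I expect it to be the main — though still routine — obstacle. The area identity and the cancellation argument are elementary, so that overall the proof is essentially a one-line appeal to the proof of Theorem~\ref{thm:stab} together with the observation that \eqref{eq:sdwfb} annihilates the new terms.
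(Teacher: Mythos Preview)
Your proposal is correct and follows essentially the same approach as the paper's own proof: test \eqref{eq:Q0hXtLMa} with $\vec\chi = \vec X^h_t$, use the time-differentiated constraints \eqref{eq:sdwfb} (via the discrete analogues of \eqref{eq:dAdt} and \eqref{eq:dVdt}) to eliminate the Lagrange-multiplier terms, and then invoke the argument of Theorem~\ref{thm:stab} verbatim. You simply supply more detail on the volume-derivative identity than the paper, which cites \eqref{eq:dVdt} directly.
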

\begin{proof}
Differentiating the three equations in \eqref{eq:sdwfb} with respect to $t$,
recalling \eqref{eq:dAdt}, \eqref{eq:dVdt}, and choosing
$\vec\chi = \vec X^h_t$ in \eqref{eq:Q0hXtLMa} yields
\begin{align*}
&
2\,\pi\,\sum_{i=1}^2\left((\vec X^h_i\,.\,\vec\ek_1)\,|\mat Q^h_i\,
[\vec X^h_i]_t|^2,|[\vec X^h_i]_\rho|\right)^h 
- \left([\vec Y^h_i]_\rho\,.\,\vec\nu^h_i , [\vec X^h_i]_{t,\rho}
\,.\,\vec\nu^h_i\,|[\vec X^h_i]_\rho|^{-1} \right) 
\nonumber \\ & \qquad 
= \sum_{i=1}^2 \left( \vec f^h_i, [\vec X^h_i]_t \,|[\vec X^h_i]_\rho|\right)^h ,
\end{align*}
which is equivalent to \eqref{eq:Pxtxth}. Hence the stability result follows as
in the proof of Theorem~\ref{thm:stab}.
\end{proof}

\setcounter{equation}{0}
\section{Fully discrete scheme}  \label{sec:fd}

Let $0= t_0 < t_1 < \ldots < t_{M-1} < t_M = T$ be a
partitioning of $[0,T]$ into possibly variable time steps 
$\ttau_m = t_{m+1} - t_{m}$, $m=0\to M-1$. 
For $\vec X^m  = (\vec X^m_1, \vec X^m_2) \in \xspace^h$, we let 
$\vec\tau^m_i$ and $\vec\nu^m_i$ be the natural fully discrete analogues
of $\vec\tau^h_i$ and $\vec\nu^h_i$, recall (\ref{eq:tauh}).  
In addition, let 
$\vec\omega_{i}^m \in \Vhi$ 
and $\vec v_{i}^m \in \Vhi$, $i=1,2$, 
be the natural fully discrete analogues of
(\ref{eq:omegah}) and \eqref{eq:vh0}. 
Finally, let $\mat Q^m_i \in [V^h_i]^{2\times 2}$ be the natural fully discrete 
analogue of $\mat Q^h_i$, recall \eqref{eq:Qh}. 

We propose the following fully discrete approximation of $(\BGNpwf^h)^h$,
where we make use of the reformulation in Remark~\ref{rem:C0h}.

\noindent
$(\BGNpwf^m)^{h}$
Let $\vec X^0 \in \xspace^h$, $\kappa^0 \in W^h$, $\vec Y^0 \in \yspace^h$ 
and $C_1\,\beta^0 \in \bR$ be given.
For $m=0,\ldots,M-1$, find $\delta\vec X^{m+1} \in \xspace^h$, 
with $\vec X^{m+1} = \vec X^m + \delta\vec X^{m+1}$, 
$\kappa^{m+1} \in W^h$, $C_1\,\beta^{m+1} \in \bR$,
$\vec Y^{m+1} \in \yspace^h$ with
\begin{equation} \label{eq:Ycond}
\begin{cases}
\vec Y^{m+1}_i(\tfrac12) = 2\,\pi\,\alpha^G_i\,\vec\ek_1\,,\ i=1,2
& C_1 = 0\,,\\
\vec Y^{m+1}_1(\tfrac12) - \vec Y^{m+1}_2(\tfrac12) = 2\,\pi\,[\alpha^G_1 -
\alpha^G_2]\,\vec\ek_1\,,\
\displaystyle
\sum_{i=1}^2 \left(\chi_{i,(2-i)\,J_i}\,[\vec X^m_i]_\rho, \vec Y^{m+1}_i
\right)^h = 0 & C_1 = 1\,,
\end{cases}
\end{equation}
such that
\begin{subequations} \label{eq:fd}
\begin{align}
& 2\,\pi\,\sum_{i=1}^2 \left(\vec X^m_i\,.\,\vec\ek_1\,\mat Q^m_i\,
\frac{\vec X^{m+1}_i - \vec X^m_i}{\ttau_m}, \vec\chi_i\,|[\vec X^m_i]_\rho|
\right)^h 
- \sum_{i=1}^2
\left([\vec Y^{m+1}_i]_\rho , [\vec\chi_i]_\rho\,|[\vec X^m_i]_\rho|^{-1} 
\right) 
 \nonumber \\ & \
=-\sum_{i=1}^2
\left( [\vec Y^{m}_i]_\rho\,.\,\vec\tau^m_i, 
[\vec\chi_i]_\rho\,.\,\vec\tau^m_i\,|[\vec X^m_i]_\rho|^{-1} \right)
+ C_1\,\beta^{m} \sum_{i=1}^2 
\left(\chi_{i,(2-i)\,J_i}\,[\vec\chi_i]_\rho, \vec Y^m_i \right)^h
\nonumber \\ & \quad
 - \pi\, \sum_{i=1}^2
 \left( \alpha_i \left[ \doctorkappa^h_i(\vec X^m_i,\kappa^m_i) - \spont_i 
\right]^2, \vec\chi_i\,.\,\vec\ek_1\,|[\vec X^m_i]_\rho| 
+ (\vec X^m_i\,.\,\vec\ek_1)\,\vec\tau^m_i\,.\,[\vec\chi_i]_\rho \right)^h
\nonumber \\ & \quad
+ 2\,\pi\,\sum_{i=1}^2
\alpha_i\left( \left[\doctorkappa^h_i(\vec X^m_i,\kappa^m_i) - \spont_i\right] 
 (\doctorZ^h_i - 2), 
\frac{\vec\omega_{i}^m\,.\,\vec \ek_1}{\vec X^m_i\,.\,\vec \ek_1}\,
\vec\chi_i\,.\,\vec\ek_1  \,|[\vec X^m_i]_\rho| \right)^h 
\nonumber \\ & \quad
+ 2\,\pi\,\sum_{i=1}^2
\alpha_i \left( \left[\doctorkappa^h_i(\vec X^m_i,\kappa^m_i) - \spont_i\right]
(\doctorZ^h_i - 2)\,\vec\ek_1, 
(\vec\nu^m_i\,.\,[\vec\chi_i]_\rho) \,\vec\tau^m_i + 
(\vec\tau^m_i\,.\,[\vec\chi_i]_\rho) \,
(\vec\omega_{i}^m-\vec\nu^m_i) \right)^h 
\nonumber \\ & \quad
+ \sum_{i=1}^2 \left( \kappa^m_i\,(\vec Y^m_i)^\perp ,
[\vec\chi_i]_\rho \right)^h
- \pi\,\varsigma\, \sum_{i=1}^2 \vec\chi_i(\tfrac12)\,.\,\vec\ek_1
\qquad \forall\ \vec\chi \in \xspaceh\,, \label{eq:fda} \\
&2\,\pi\,\sum_{i=1}^2
 \left( \vec X^m_i\,.\,\vec\ek_1 \,\left( \alpha_i\,[\doctorkappa^h_i(
\vec X^m_i,\kappa^{m+1}_i) - \spont_i ] \right)
,\chi_i \,|[\vec X^m_i]_\rho|\right)^h
- \sum_{i=1}^2
\left( \vec Y^{m+1}_i, \chi_i\,\vec\nu^m_i \,|[\vec X^m_i]_\rho|\right)^h 
\nonumber \\ & \hspace{8cm}
= 0 \qquad \forall\ \chi \in W^h\,,\label{eq:fdb} \\
& \sum_{i=1}^2
\left( \kappa^{m+1}_i\,\vec\nu^m_i, \vec\eta_i \,|[\vec X^m_i]_\rho|\right)^h
+ C_1\,\beta^{m+1}\,\sum_{i=1}^2 
\left(\chi_{i,(2-i)\,J_i}\,[\vec X^m_i]_\rho, \vec\eta_i \right)^h
\nonumber \\ & \hspace{3cm}
+ \sum_{i=1}^2
\left( [\vec X^{m+1}_i]_\rho, [\vec\eta_i]_\rho\,|[\vec X^m_i]_\rho|^{-1} 
\right) = 0  \quad \forall\ \vec\eta \in 
\begin{cases} 
\Vhycztest & C_1 = 0\,,\\
\Vhycotest 
& C_1 = 1\,.
\end{cases} \label{eq:fdc}
\end{align}
\end{subequations}

The linear system \eqref{eq:fd} in practice can be solved similarly to the
techniques employed by the authors in \cite{triplej,triplejMC,clust3d}. That
is, we assemble the linear systems on each curve separately, and then use
projections to enforce the matching conditions in $\xspace^h$ and
$\yspace^h_{C^1}$ for the test and trial spaces. The resulting systems of 
linear equations can be solved with preconditioned Krylov subspace iterative
solvers. Here independent direct solvers for the linear systems on 
each curve act as efficient preconditioners, where for the direct
factorisations we employ the UMFPACK package, see \cite{Davis04}.

\begin{ass} \label{ass:spanvm0}
Let $\vec X^m$ satisfy {\rm Assumption~\ref{ass:Ah2}} with $\vec X^h$ 
replaced by $\vec X^m$.
In the case $C_1=1$, we also assume that
$\vec X^m_1(q_{1,J_1-1}) \not= \vec X^m_2(q_{2,1})$, and that
$\spa\{\vec\omega^m_1(q_{1,j})\}_{j = 1}^{J_1} = 2$ or
$\spa\{\vec\omega^m_2(q_{2,j})\}_{j = 0}^{J_2-1} = 2$.
\end{ass}

\begin{lem} \label{lem:ex}
Let {\rm Assumption~\ref{ass:spanvm0}} hold.
Let $\vec X^m \in \xspace^h$, $\vec Y^m \in \yspace^h$,
$\kappa^m \in W^h$, $C_1\,\beta^m\in\bR$
and $\alpha_1,\alpha_2 \in \bRplus$,
$\spont_1,\spont_2,\alpha^G_1,\alpha^G_2 \in \bR$ be given.
Then there exists a unique solution to $(\BGNpwf^m)^h$, \eqref{eq:fd}.
\end{lem}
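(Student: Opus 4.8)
The plan is to use that, for given data $\vec X^m,\vec Y^m,\kappa^m,\beta^m$, the scheme $(\BGNpwf^m)^h$ is a finite-dimensional \emph{square} linear system (a count of the degrees of freedom in $\xspaceh$, $W^h$ and $\yspaceh$ subject to the constraints in \eqref{eq:Ycond}, together with the scalar $C_1\beta^{m+1}$, against the test spaces in \eqref{eq:fda}--\eqref{eq:fdc}, confirms this), so that existence and uniqueness follow at once from showing that the associated homogeneous system has only the zero solution. Thus I would suppose that $\delta\vec X^{m+1}\in\xspaceh$, $\kappa^{m+1}\in W^h$, $C_1\beta^{m+1}\in\bR$ and $\vec Y^{m+1}\in\yspaceh$ solve the homogeneous versions of \eqref{eq:Ycond} and \eqref{eq:fda}--\eqref{eq:fdc}, i.e.\ the versions in which every term built solely from data ($\vec X^m,\vec Y^m,\kappa^m,\beta^m$, $\spont_i$, $\alpha^G_i$, $\varsigma$) is deleted and, recalling \eqref{eq:calKh}, $\doctorkappa^h_i(\vec X^m_i,\kappa^{m+1}_i)$ is replaced by the part that is linear in $\kappa^{m+1}_i$, whose value at a node $q_{i,j}$ is $\doctorZ^h_i(q_{i,j})\,\kappa^{m+1}_i(q_{i,j})$. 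The aim is then to conclude $\delta\vec X^{m+1}=\vec 0$, $\kappa^{m+1}=0$, $\vec Y^{m+1}=\vec 0$ and $\beta^{m+1}=0$.

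The first step is an energy identity, obtained as in the proof of Theorem~\ref{thm:stab}. I would test homogeneous \eqref{eq:fda} with $\vec\chi=\delta\vec X^{m+1}$, homogeneous \eqref{eq:fdb} with $\chi=\kappa^{m+1}$, and homogeneous \eqref{eq:fdc} with $\vec\eta=\vec Y^{m+1}$; the latter is admissible because the homogeneous \eqref{eq:Ycond} forces $\vec Y^{m+1}\in\Vhycztest$ when $C_1=0$ and $\vec Y^{m+1}\in\Vhycotest$ when $C_1=1$. Adding the three identities, the two pairs of cross-terms coupling $\vec Y^{m+1}$ with $\kappa^{m+1}$ and with $[\delta\vec X^{m+1}]_\rho$ cancel by symmetry of the (mass-lumped) bilinear forms, and the $\beta^{m+1}$-term drops by the second condition in the homogeneous \eqref{eq:Ycond}, leaving
\[
\frac{2\pi}{\ttau_m}\sum_{i=1}^2\left(\vec X^m_i\,.\,\vec\ek_1\,|\mat Q^m_i\,\delta\vec X^{m+1}_i|^2,|[\vec X^m_i]_\rho|\right)^h
+2\pi\sum_{i=1}^2\alpha_i\left(\vec X^m_i\,.\,\vec\ek_1\,\doctorZ^h_i\,|\kappa^{m+1}_i|^2,|[\vec X^m_i]_\rho|\right)^h=0 .
\]
Both summands are nonnegative: $\mat Q^m_i(q_{i,j})$ is a symmetric orthogonal projection at every node, $\doctorZ^h_i\geq1$, $\alpha_i>0$, and $\vec X^m_i\,.\,\vec\ek_1>0$ at every non-axis node by {\rm Assumption~\ref{ass:spanvm0}} (recall {\rm Assumption~\ref{ass:Ah1}} and \eqref{eq:Xhpos}), while at an axis node $\vec X^m_i\,.\,\vec\ek_1$ vanishes. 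Hence both summands are zero. Since $\kappa^{m+1}_i\in\Whi$ vanishes at the axis node, the second one gives $\kappa^{m+1}\equiv0$; the first gives $\mat Q^m_i\,\delta\vec X^{m+1}_i=\vec 0$ at every non-axis node, so, as $\mat Q^m_i(\tfrac12)=\mat\Id$, in particular $\delta\vec X^{m+1}_i(\tfrac12)=\vec 0$.

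With $\kappa^{m+1}=0$, the nodal vector $\vec X^m_i\,.\,\vec\ek_1\,\mat Q^m_i\,\delta\vec X^{m+1}_i$ vanishes at every node (it is zero away from the axis and carries the factor $\vec X^m_i\,.\,\vec\ek_1=0$ at the axis node), so \eqref{eq:fda} reduces to $\sum_i([\vec Y^{m+1}_i]_\rho,[\vec\chi_i]_\rho\,|[\vec X^m_i]_\rho|^{-1})=0$ for all $\vec\chi\in\xspaceh$. Because $\vec Y^{m+1}$ matches at the junction in either case, $\vec Y^{m+1}\in\xspaceh$, and the choice $\vec\chi=\vec Y^{m+1}$ yields $\sum_i([\vec Y^{m+1}_i]_\rho,[\vec Y^{m+1}_i]_\rho\,|[\vec X^m_i]_\rho|^{-1})=0$, hence (as $|[\vec X^m_i]_\rho|>0$) $[\vec Y^{m+1}_i]_\rho\equiv\vec 0$, i.e.\ $\vec Y^{m+1}_i\equiv\vec c_i\in\bR^2$. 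Then, putting $\kappa^{m+1}=0$ in \eqref{eq:fdb} and testing with the basis functions $\chi_{i,j}\in\Whi$, the defining relation \eqref{eq:omegah} gives $\vec c_i\,.\,\vec\omega^m_i(q_{i,j})=0$ at every node $q_{i,j}$ with $\chi_{i,j}\in\Whi$. If $C_1=0$, the homogeneous \eqref{eq:Ycond} already reads $\vec c_i=\vec Y^{m+1}_i(\tfrac12)=\vec 0$; if $C_1=1$, then $\vec c_1=\vec c_2$ and {\rm Assumption~\ref{ass:spanvm0}} guarantees that one of $\{\vec\omega^m_1(q_{1,j})\}_{j=1}^{J_1}$, $\{\vec\omega^m_2(q_{2,j})\}_{j=0}^{J_2-1}$ spans $\bR^2$, so again $\vec c_1=\vec c_2=\vec 0$. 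Hence $\vec Y^{m+1}\equiv\vec 0$.

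Finally, with $\kappa^{m+1}=0$ and $\vec Y^{m+1}=\vec 0$, equation \eqref{eq:fdc} becomes $C_1\beta^{m+1}\sum_i(\chi_{i,(2-i)J_i}[\vec X^m_i]_\rho,\vec\eta_i)^h+\sum_i([\delta\vec X^{m+1}_i]_\rho,[\vec\eta_i]_\rho\,|[\vec X^m_i]_\rho|^{-1})=0$. Since $\delta\vec X^{m+1}_i(\tfrac12)=\vec 0$ we have $\delta\vec X^{m+1}\in\Vhycztest\subseteq\Vhycotest$, so $\vec\eta=\delta\vec X^{m+1}$ is admissible; as the lumped $\beta^{m+1}$-term only sees the junction value of $\delta\vec X^{m+1}_i$, it drops, leaving $\sum_i([\delta\vec X^{m+1}_i]_\rho,[\delta\vec X^{m+1}_i]_\rho\,|[\vec X^m_i]_\rho|^{-1})=0$, whence each $\delta\vec X^{m+1}_i$ is constant and therefore $\equiv\vec 0$ by its junction value. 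Now \eqref{eq:fdc} reads $C_1\beta^{m+1}\sum_i(\chi_{i,(2-i)J_i}[\vec X^m_i]_\rho,\vec\eta_i)^h=0$; choosing $\vec\eta$ with $\vec\eta_1(\tfrac12)=\vec\eta_2(\tfrac12)=\vec w$ gives $C_1\beta^{m+1}\,(\vec X^m_2(q_{2,1})-\vec X^m_1(q_{1,J_1-1}))\,.\,\vec w=0$ for all $\vec w\in\bR^2$, and since {\rm Assumption~\ref{ass:spanvm0}} excludes $\vec X^m_1(q_{1,J_1-1})=\vec X^m_2(q_{2,1})$ when $C_1=1$, it follows that $\beta^{m+1}=0$ (there is nothing to prove when $C_1=0$). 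This shows the homogeneous system has only the trivial solution, which proves the lemma. I expect the main obstacle to be the bookkeeping in the first step — verifying that all cross-terms genuinely cancel and that the two surviving quadratic forms are one-signed under the mass-lumped inner product — together with the (small but essential) point that $\vec Y^{m+1}$ is an admissible test function in both \eqref{eq:fda} and \eqref{eq:fdc} for either junction type; the use of {\rm Assumption~\ref{ass:spanvm0}} to remove the residual constant and tangential degrees of freedom at a $C^1$-junction parallels the arguments in \cite{axipwf,pwftj}.
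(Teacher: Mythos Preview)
Your proof is correct and follows essentially the same strategy as the paper's: reduce to the homogeneous linear system, derive the energy identity by testing the three equations with $(\delta\vec X^{m+1},\kappa^{m+1},\vec Y^{m+1})$, and then use {\rm Assumption~\ref{ass:spanvm0}} to eliminate the remaining degrees of freedom. The only cosmetic difference is the order in which you dispose of the unknowns after the energy identity --- you establish $\vec Y^{m+1}=\vec 0$ before $\delta\vec X^{m+1}=\vec 0$, whereas the paper does the reverse --- but both orderings are valid and rely on the same ingredients.
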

\begin{proof}
Let $\ell = C_1 \in \{0,1\}$.
As we have a linear system of equations, with the same number of equations as
unknowns, existence follows from uniqueness. Hence we consider a solution
to the homogeneous equivalent of \eqref{eq:fd}, and need to show that this
solution is in fact zero. In particular, let
$\delta\vec X \in \xspaceh$, 
$\kappa \in W^h$, $C_1\,\beta\in\bR$,
$\vec Y \in \yspace^h_{C^\ell}$ be such that
\begin{subequations} 
\begin{align}
& 2\,\pi\,\sum_{i=1}^2 \left( (\vec X^m_i\,.\,\vec\ek_1)\,\mat Q^m_i\,
\delta\vec X_i, \vec\chi_i\,|[\vec X^m_i]_\rho|\right)^h 
- \ttau_m\,\sum_{i=1}^2 
 \left([\vec Y_i]_\rho , [\vec\chi_i]_\rho\,|[\vec X^m_i]_\rho|^{-1} 
\right) = 0  \nonumber \\ & 
\hspace{12cm} \forall\ \vec\chi \in \xspace^h\,, \label{eq:fdproofa} \\
&2\,\pi\,\sum_{i=1}^2 \alpha_i \left( 
\vec X^m_i\,.\,\vec\ek_1\,\kappa_i ,
\chi_i \,|[\vec X^m_i]_\rho|\right)^h
- \sum_{i=1}^2 \left( \vec Y_i, \chi_i\,\vec\nu^m_i \,|[\vec X^m_i]_\rho|
\right)^h = 0
\qquad \forall\ \chi \in W^h\,,\label{eq:fdproofb} \\
&\sum_{i=1}^2 \left( \kappa_i\,\vec\nu^m_i, \vec\eta_i \,|[\vec X^m_i]_\rho|
\right)^h
+ C_1\,\beta\,\sum_{i=1}^2 
\left(\chi_{i,(2-i)\,J_i}\,[\vec X^m_i]_\rho, \vec\eta_i \right)^h
+ \sum_{i=1}^2 \left( (\delta\vec X_i)_\rho, [\vec\eta_i]_\rho\,
|[\vec X^m_i]_\rho|^{-1} \right) \nonumber \\ & \qquad
= 0
\qquad \forall\ \vec\eta \in \yspace^h_{C^\ell}\,, \label{eq:fdproofc} \\
& 
C_1\,
\sum_{i=1}^2 \left(\chi_{i,(2-i)\,J_i}\,[\vec X^m_i]_\rho, \vec Y_i \right)^h 
= 0\,. \label{eq:fdproofd}
\end{align}
\end{subequations}
Choosing $\vec\chi=\delta\vec X$ in \eqref{eq:fdproofa}, 
$\chi = \kappa$ in \eqref{eq:fdproofb} and
$\vec\eta = \vec Y$ in \eqref{eq:fdproofc} yields, that
\begin{equation} \label{eq:fdproof1}
2\,\pi\,\sum_{i=1}^2\left( \vec X^m_i\,.\,\vec\ek_1\,
|\mat Q^m_i\,\delta\vec X_i|^2,
|[\vec X^m_i]_\rho|\right)^h
+ 2\,\pi\,\sum_{i=1}^2\alpha_i\,\ttau_m \left( \vec X^m_i\,.\,\vec\ek_1\,
\kappa^2_i , |[\vec X^m_i]_\rho|\right)^h = 0\,.
\end{equation}
It follows from \eqref{eq:fdproof1}, Assumption~\ref{ass:spanvm0} 
and $\kappa \in W^h$ that $\kappa=0$.
Similarly, it follows from \eqref{eq:fdproof1}, 
$\delta\vec X \in \xspace^h$ and \eqref{eq:Qh} that 
$\delta\vec X_1(\tfrac12) = \delta\vec X_2(\tfrac12) = \vec 0$.
Hence we can choose
$\vec\eta =\delta\vec X \in \yspace^h_{C^\ell}$ 
in \eqref{eq:fdproofc} to yield 
\begin{equation*} 
\sum_{i=1}^2 \left( |(\delta\vec X_i)_\rho|^2, |[\vec X^m_i]_\rho|^{-1}\right)
= 0\,,
\end{equation*}
which implies that $\delta\vec X = \vec 0$.
In addition, if $C_1 = 1$, we recall from \eqref{eq:betavariation} that
choosing $\vec\eta = (\chi_{1,J_1}\,\vec\ek_k,\chi_{2,0}\,\vec\ek_k)$
in \eqref{eq:fdproofc}, for $k = 1,2$, yields that
\begin{equation*} 
\beta\,(\vec X^m_1(q_{1,J_1}) - \vec X^m_1(q_{1,J_1-1}) + 
\vec X^m_2(q_{2,1}) - \vec X^m_2(q_{2,0})) = \vec 0 \,.
\end{equation*}
Hence Assumption~\ref{ass:spanvm0} yields, on noting
$\vec X^m_1(q_{1,J_1}) = \vec X^m_2(q_{2,0})$, that $\beta = 0$.
Moreover, choosing $\vec\chi = \vec Y \in \yspace^h_{C^\ell} \subset \xspace^h$
in \eqref{eq:fdproofa} shows that $\vec Y_i$ is constant on $\overline I_i$,
$i=1,2$. If $C_1 = 0$, then this constant must be zero. 
If $C_1=1$, we observe from \eqref{eq:fdproofb} and \eqref{eq:omegah} that 
$\vec Y_1(q_{1,j})\,.\,\vec \omega^m_1(q_{1,j}) = 0$ for $j=1,\ldots,J_1$ and
$\vec Y_2(q_{2,j})\,.\,\vec \omega^m_2(q_{2,j}) = 0$ for $j=0,\ldots,J_2-1$.
Hence Assumption~\ref{ass:spanvm0} yields that $\vec Y = \vec 0$.
Thus we have shown the existence of a unique solution to $(\BGNpwf^m)^h$.
\end{proof}

\subsection{Conserved flows} \label{sec:fdC}

Here, following the approach in \citet[\S4.3.1]{axisd}, we consider fully
discrete conserving approximations. 
In particular, on rewriting \eqref{eq:fda} as
\begin{align*}
& 2\,\pi\,\sum_{i=1}^2 \left(\vec X^m_i\,.\,\vec\ek_1\,\mat Q^m_i\,
\frac{\vec X^{m+1}_i - \vec X^m_i}{\ttau_m}, \vec\chi_i\,|[\vec X^m_i]_\rho|
\right)^h 
- \sum_{i=1}^2
\left([\vec Y^{m+1}_i]_\rho , [\vec\chi_i]_\rho\,|[\vec X^m_i]_\rho|^{-1} 
\right) \\ & \qquad
 = \sum_{i=1}^2\left( \vec f^m_i, \vec\chi_i \, |[\vec X^m_i]_\rho| \right)^h ,
\end{align*}
we can formulate our surface area and volume conserving variant for
$(\BGNpwf^m)^h$ as follows. 

$(\BGNpwf^m_{A,V})^h$:
Let $\vec X^0 \in \xspace^h$, $\kappa^0 \in W^h$, $\vec Y^0 \in \yspace^h$ 
and $C_1\,\beta^0 \in \bR$ be given.
For $m=0,\ldots,M-1$, find $\delta\vec X^{m+1} \in \xspace^h$, 
with $\vec X^{m+1} = \vec X^m + \delta\vec X^{m+1}$, 
$\kappa^{m+1} \in W^h$, $C_1\,\beta^{m+1} \in \bR$,
$\vec Y^{m+1} \in \yspace^h$ with \eqref{eq:Ycond},  
and $\lambda_{A,1}^{m+1}, \lambda_{A,2}^{m+1},\lambda_V^{m+1} \in \bR$ 
such that \eqref{eq:fdb}, \eqref{eq:fdc} and
\begin{subequations} \label{eq:fdwf}
\begin{align}
& 2\,\pi\,\sum_{i=1}^2 \left(\vec X^m_i\,.\,\vec\ek_1\,\mat Q^m_i\,
\frac{\vec X^{m+1}_i - \vec X^m_i}{\ttau_m}, \vec\chi_i\,|[\vec X^m_i]_\rho|
\right)^h 
- \sum_{i=1}^2
\left([\vec Y^{m+1}_i]_\rho , [\vec\chi_i]_\rho\,|[\vec X^m_i]_\rho|^{-1} 
\right) 
\nonumber \\ & \qquad
= \sum_{i=1}^2\left( \vec f^m_i, \vec\chi_i \, |[\vec X^m_i]_\rho| \right)^h
- 2\,\pi\,\sum_{i=1}^2\lambda_{A,i}^{m+1}
\left[
\left( \vec\ek_1, \vec\chi_i\,|[\vec X^m_i]_\rho|\right) 
+ \left( (\vec X^m_i\,.\,\vec\ek_1)\,\vec\tau^m_i
, [\vec\chi_i]_\rho\right) \right]
\nonumber \\ & \qquad\qquad
- 2\,\pi\,\lambda_V^{m+1} \sum_{i=1}^2 
\left((\vec X^m_i\,.\,\vec\ek_1)\,\vec\nu^m,
\vec\chi_i \, |[\vec X^m_i]_\rho| \right)
\qquad \forall\ \vec\chi \in \xspaceh\,, \label{eq:fdwfa} \\
&{\rm (i)}\ A_i(\vec X^{m+1}) = A_i(\vec X^0)\,,\ i=1,2\,,\quad
{\rm (ii)}\ V(\vec X^{m+1}) = V(\vec X^0) \label{eq:fdwfb} 
\end{align}
\end{subequations}
hold.
Here we have recalled \eqref{eq:Areah} and \eqref{eq:Volumeh}.

The nonlinear system of equations arising at each time level of
$(\BGNpwf^m_{A,V})^h$ can be solved with a suitable iterative solution method, 
see below.
In the simpler case of phase area conserving flow, we need to find
$(\delta\vec X^{m+1}, \kappa^{m+1}, \vec Y^{m+1}, C^1\,\beta^{m+1},
\lambda_{A,1}^{m+1},\lambda_{A,2}^{m+1}, \lambda_V^{m+1}) 
\in \xspace^h \times W^h \times \yspace^h \times \bR \times \bR^2 \times \{0\}$
such that \eqref{eq:fdb}, \eqref{eq:fdc}, \eqref{eq:fdwfa} and 
\eqref{eq:fdwfb}(i) hold. 
Similarly, for volume conserving flow, we need to find
$(\delta\vec X^{m+1}, \kappa^{m+1}, \vec Y^{m+1}, C^1\,\beta^{m+1},
\lambda_{A,1}^{m+1},\lambda_{A,2}^{m+1}, \lambda_V^{m+1}) 
\in \xspace^h \times W^h \times \yspace^h \times \bR
\times \{0\}^2 \times \bR$
such that \eqref{eq:fdb}, \eqref{eq:fdc}, \eqref{eq:fdwfa} and 
\eqref{eq:fdwfb}(ii) hold. 

Adapting the strategy in \citet[\S4.3.1]{axisd}, 
we now describe a Newton method for solving the nonlinear system 
(\ref{eq:fdwf}), \eqref{eq:fdb} and \eqref{eq:fdc},
where for ease of presentation we suppress the dependence on $\beta^{m+1}$.
The linear system (\ref{eq:fdwfa}), \eqref{eq:fdb} and \eqref{eq:fdc}, 
with $(\lambda_{A,1}^{m+1}, \lambda_{A,2}^{m+1}, \lambda_V^{m+1})$ in
(\ref{eq:fdwfa}) replaced by 
$\lambda = (\lambda_{A,1}, \lambda_{A,2}, \lambda_V)$, 
can be written as:
Find $(\delta\vec X^{m+1}(\lambda), 
\kappa^{m+1}(\lambda), \vec Y^{m+1}(\lambda)) 
\in \xspace^h\times W^h \times \yspace^h$ such that
\begin{equation*} 
\mathbb{T}^m\,\begin{pmatrix}
\vec Y^{m+1}(\lambda)\\[1mm]
\delta\vec X^{m+1}(\lambda)\\[1mm]
\kappa^{m+1}(\lambda)
\end{pmatrix}
= \begin{pmatrix} \vec{\underline{\mathfrak g}}^m \\[1mm] 0 \\[1mm] \vec 0
\end{pmatrix}
+ \sum_{\ell=1}^2 \lambda_{A,\ell}\, \begin{pmatrix} \vec{\underline{\mathcal K}}^m_\ell 
\\[1mm] 0 \\[1mm]\vec 0
\end{pmatrix}
+ \lambda_V\, \begin{pmatrix} \vec{\underline{\mathcal N}}^m 
\\[1mm] 0 \\[1mm]\vec 0
\end{pmatrix}.
\end{equation*}
Assuming the linear operator $\mathbb{T}^m$ is invertible, we obtain that
\begin{align}
\begin{pmatrix}
\vec Y^{m+1}(\lambda)\\[1mm]
\delta\vec X^{m+1}(\lambda)\\[1mm]
\kappa^{m+1}(\lambda)
\end{pmatrix} &
= (\mathbb{T}^m)^{-1}
\left[\begin{pmatrix} \vec{\underline{\mathfrak g}}^m \\[1mm]
 0 \\[1mm] \vec 0  \end{pmatrix}
+ \sum_{\ell=1}^2 \lambda_{A,\ell} 
\begin{pmatrix} \vec{\underline{\mathcal K}}^m_\ell \\[1mm] 0 \\[1mm] \vec 0
\end{pmatrix}
+ \lambda_V\, 
\begin{pmatrix} \vec{\underline{\mathcal N}}^m \\[1mm] 0 \\[1mm] \vec 0
\end{pmatrix}\right] \nonumber \\ & 
=: (\mathbb{T}^m)^{-1} 
\begin{pmatrix} \vec{\underline{\mathfrak g}}^m \\[1mm] 0 \\[1mm] \vec 0
\end{pmatrix}
+ \sum_{\ell=1}^2\lambda_{A,\ell} \begin{pmatrix} \vec{\underline s}^m_{\ell,1} \\[1mm] 
\vec{\underline s}^m_{\ell,2} \\[1mm] {\underline s}^m_{\ell,3}
\end{pmatrix}
+ \lambda_V\, \begin{pmatrix} \vec{\underline q}^m_1 \\[1mm] 
\vec{\underline q}^m_2 \\[1mm] {\underline q}^m_3
\end{pmatrix} .
\label{eq:lmsysinverse}
\end{align}
It immediately follows from (\ref{eq:lmsysinverse}) that
\begin{equation*}
\partial_{\lambda_{A,\ell}} \vec X^{m+1}(\lambda) 
= \vec{\underline s}^m_{\ell,2}\,,\ \ell=1,2\,,\quad
\partial_{\lambda_V} \vec X^{m+1}(\lambda)
= \vec{\underline q}^m_2\,,
\end{equation*}
where $\vec X^{m+1}(\lambda) = \vec X^m + \delta\vec X^{m+1}(\lambda)$.
Hence
\begin{align*}
\partial_{\lambda_{A,\ell}} A_1(\vec X^{m+1}(\lambda))
  & = \left[\deldel{\vec X^{m+1}}\, A_1(\vec X^{m+1}(\lambda))\right]
(\vec s^m_{\ell,2})\,, \\
\partial_{\lambda_{A,\ell}} A_2(\vec X^{m+1}(\lambda))
  & = \left[\deldel{\vec X^{m+1}}\, A_2(\vec X^{m+1}(\lambda))\right]
(\vec s^m_{\ell,2})\,, \\
\partial_{\lambda_{A,\ell}} V(\vec X^{m+1}(\lambda))
 & = \left[\deldel{\vec X^{m+1}}\, V(\vec X^{m+1}(\lambda))\right]
(\vec s^m_{\ell,2})\,,
\end{align*}
for $\ell=1,2$, and similarly for 
$\partial_{\lambda_V} A_i(\vec X^{m+1}(\lambda))$, $i=1,2$,
and $\partial_{\lambda_V} V(\vec X^{m+1}(\lambda))$.
Here $\vec s^m_{\ell,2} \in \xspace^h$ 
is the finite element function corresponding to the
coefficients in $\vec{\underline s}^m_{\ell,2}$ for the standard basis of 
$\xspace^h$.
Moreover, on recalling \eqref{eq:dAdt} and \eqref{eq:dVdt}, 
we have defined the first variations of $A_i(\vec Z^h)$,
for any $\vec Z^h \in \xspace^h$, as
\begin{align*}
\left[\deldel{\vec Z^h}\, A_i(\vec Z^h)\right](\vec\eta)
 & = \lim_{\epsilon\to0} \frac1\epsilon\left(
A_i(\vec Z^h + \epsilon\,\vec\eta) - A_i(\vec Z^h)\right) \\ &
= 2\,\pi
\left(\vec\eta_i\,.\,\vec\ek_1,|[\vec Z^h_i]_\rho|\right) + 2\,\pi \left(
(\vec Z^h_i\,.\,\vec\ek_1)\,
[\vec\eta_i]_\rho,[\vec Z^h_i]_\rho\, |[\vec Z^h_i]_\rho|^{-1} \right)
\quad \forall\ \vec\eta \in \xspace^h \,,
\end{align*}
and similarly
\begin{align*}
\left[\deldel{\vec Z^h}\, V(\vec Z^h)\right](\vec\eta)
& = \lim_{\epsilon\to0} \frac1\epsilon\left(
V(\vec Z^h + \epsilon\,\vec\eta) - V(\vec Z^h)\right) \nonumber \\ & 
= - 2\,\pi \sum_{i=1}^2
\left( \vec Z^h_i\,.\,\vec\ek_1, \vec\eta_i\,.\,
[[\vec Z^h_i]_\rho]^\perp\right) 
\quad \forall\ \vec\eta \in \xspace^h \,.
\end{align*}
We can then proceed as in \citet[(4.13)]{axisd} to define a Newton iteration
for finding a solution to the nonlinear system $(\BGNpwf^m_{A,V})^h$.
In practice this Newton iteration always converged within a couple of 
iterations.

\clearpage
\setcounter{equation}{0}
\section{Numerical results} \label{sec:nr}

As the fully discrete energy for the scheme $(\BGNpwf^m)^{h}$,
on recalling (\ref{eq:Eh}), we define
\begin{align*} 
\widehat E^{m+1} & = \pi\, \sum_{i=1}^2 \left(\alpha_i
\left[ \doctorkappa^h_i(\vec X^m_i, \kappa^{m+1}_i) - \spont_i \right]^2 ,
\vec X^m_i\,.\,\vec\ek_1 \,|[\vec X^m_i]_\rho|\right)^h 
\nonumber \\ & \qquad
- 2\,\pi\,\sum_{i=1}^2 \alpha^G_i\, \vec{\rm m}^{m+1}_i\,.\,\vec\ek_1
+ \pi\,\varsigma\,\sum_{i=1}^2 \vec X^{m+1}_i(\tfrac12)\,.\,\vec\ek_1\,, 
\end{align*}
where, e.g., 
\begin{align*} 
\vec{\rm m}^{m+1}_2 & = 
\left( 1, \chi_{2,0}\,|[\vec X^m_2]_\rho|\right) 
\left[\kappa^{m+1}_2\,\vec\omega^m_2\right](\tfrac12)
+ C_1\,\beta^{m+1}\left( 1 , \chi_{2,0} \right) [\vec X^m_2]_\rho (\tfrac12)
\nonumber \\ & \quad
+ \left( 1 , (\chi_{2,0})_\rho \,|[\vec X^m_2]_\rho|^{-1}\right) 
[\vec X^{m+1}_2]_\rho (\tfrac12) 
\end{align*}
is a fully discrete approximation to $\vec{\rm m}_2^h$ defined in 
\eqref{eq:sdc}, recall \eqref{eq:fdc}. 

Given $\vec X^0$, we set $\beta^0=0$ and define the following initial data.
First, we let $\vec\kappa^0_i\in \Vhi$ be such that
\begin{equation*} 
\left( \vec\kappa^{0}_i,\vec\eta_i \,|[\vec X^0_i]_\rho|\right)^h
+ \left( \vec\tau^{0}_i , [\vec\eta_i]_\rho \right)
= 0 \quad \forall\ \vec\eta_i \in \Vhi\,,
\end{equation*}
and then define $\kappa^0_{\star,i} = \pi^h_i
[\vec\kappa^{0}_i\,.\,\vec v_{i}^0]$, $i=1,2$. Now $\kappa^0 \in W^h$ is
defined as the orthogonal projection of $\kappa^0_\star$ onto $W^h$.
Moreover, we let $\vec Y^0_{\star,i} \in \Vhi$ be such that
\begin{equation*} 
\vec Y^0_{\star,i} = 2\,\pi\,\alpha_i\,\vec\pi^h_i
\left[|\vec\omega_{i}^0|^{-1}
\vec X^0_i\,.\,\vec\ek_1 \left[ \doctorkappa^h_i(\vec X^0_i, 
\kappa^0_i) - \spont_i\right] \vec v_{i}^0 \right],
\end{equation*}
and then define
$\vec Y^0_\dag \in \yspace^h_{C^0}$ as 
the orthogonal projection of $\vec Y^0_\star$ onto $\yspace^h_{C^0}$.
Finally, we let $\vec Y^0 \in \yspace^h$ via
\begin{equation*} 
\vec Y^0_i(q_{i,j}) = \begin{cases}
2\,\pi\,\alpha^G_i\,\vec\ek_1 & q_{i,j} = \tfrac12\,,\\
\vec Y^0_{\dag,i}(q_{i,j}) & q_{i,j} \in \overline I_i \setminus \{\tfrac12\}\,,
\end{cases}
\quad j = 0,\ldots,J_i\,,\ i = 1,2\,.
\end{equation*}

Unless otherwise stated, we use $\alpha_1 = \alpha_2 = 1$,
$\spont_1 = \spont_2 = \varsigma = \alpha^G_1 = \alpha^G_2 = 0$ and 
compute simulations of the unconstrained gradient flow. 
We will always use uniform time steps,
$\ttau_m = \ttau$, $m=0,\ldots,M-1$.
For the visualisations, we will display phase 1 in red, and phase 2 in yellow.

\subsection{$C^0$--junctions}

The evolution in Figure~\ref{fig:test1} starts from two symmetric surfaces that
meet at a $C^0$--junction line. For the first four experiments in this 
subsection, we use the discretisation parameters $\ttau = 10^{-3}$ and 
$J_1=J_2=65$.
The evolution appears to show that the fastest way to
reduce the overall energy to zero is to flatten and to enlarge the surfaces.
We conjecture that the surfaces are going to converge
to two flat disks with their radius converging to infinity. 
By adding a non-zero line energy, the growth to infinity is prevented. In fact,
repeating the simulation for any positive $\varsigma$ will lead to the surfaces
shrinking to a point. An example is seen in Figure~\ref{fig:test1line}, 
where we used $\varsigma=0.02$.
\begin{figure}
\center
\mbox{
\includegraphics[angle=-90,width=0.25\textwidth]{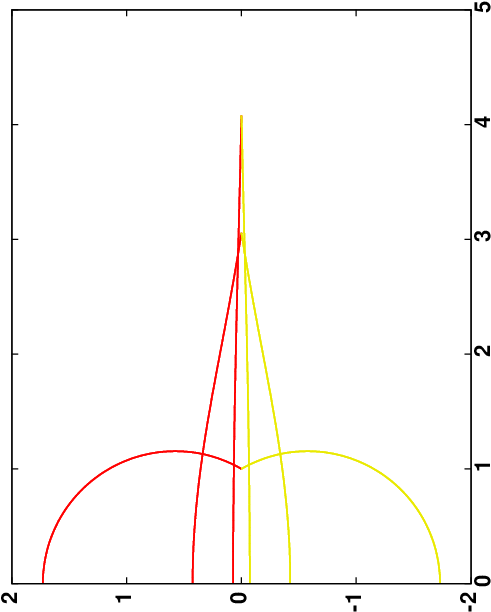} 
\includegraphics[angle=-90,width=0.25\textwidth]{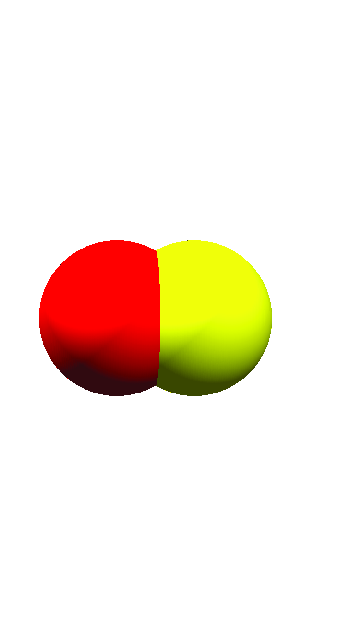} 
\includegraphics[angle=-90,width=0.25\textwidth]{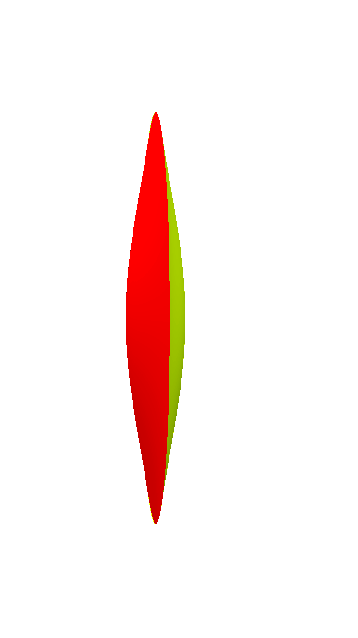} 
\includegraphics[angle=-90,width=0.25\textwidth]{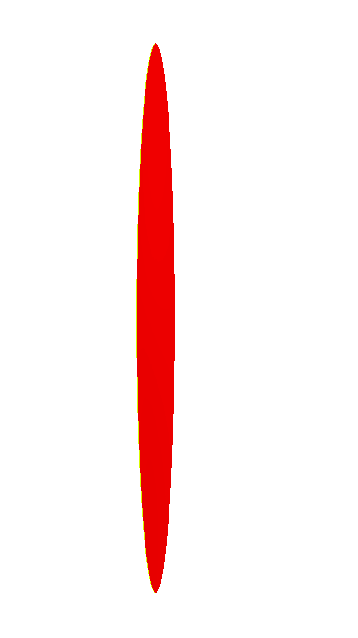} 
}
\caption{($C^0$)
Plots at times $t=0,1,10$.
}
\label{fig:test1}
\end{figure}%
\begin{figure}
\center
\mbox{
\includegraphics[angle=-90,width=0.25\textwidth]{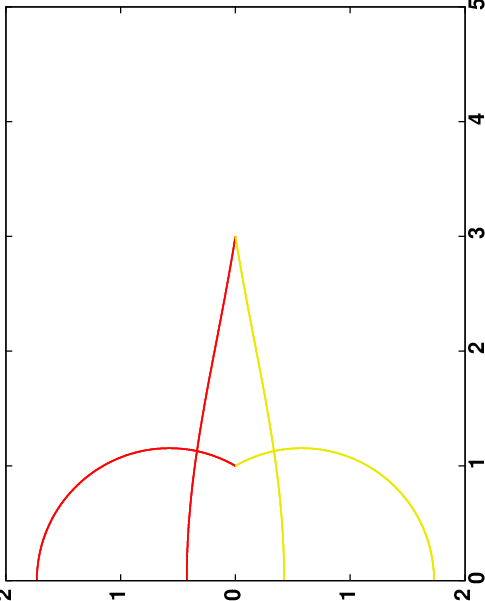} 
\includegraphics[angle=-90,width=0.25\textwidth]{figures/c0c1_c0longt0S} 
\includegraphics[angle=-90,width=0.25\textwidth]{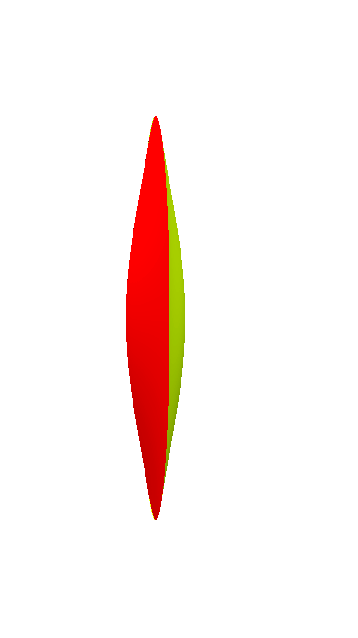} 
\includegraphics[angle=-90,width=0.25\textwidth]{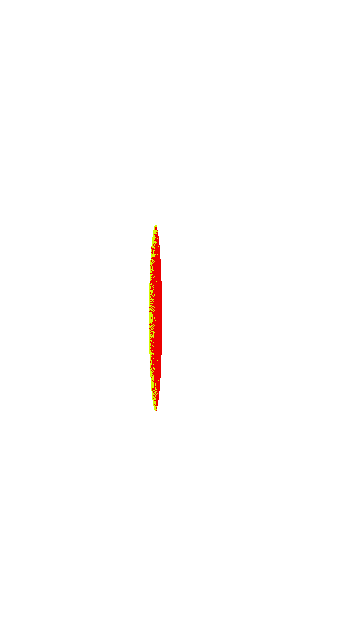} 
}
\caption{($C^0$: $\varsigma = 0.02$)
Plots at times $t=0,1,10$.
}
\label{fig:test1line}
\end{figure}%
To conclude this subsection, we show an experiment for phase area and 
volume conserving flow in Figure~\ref{fig:test3c3}.
\begin{figure}
\center
\includegraphics[angle=-90,width=0.2\textwidth]{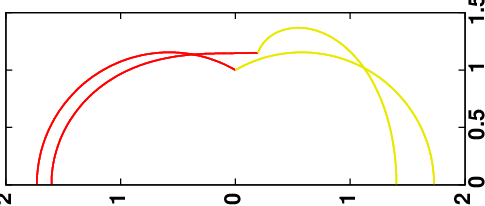} \qquad
\includegraphics[angle=-90,width=0.3\textwidth]{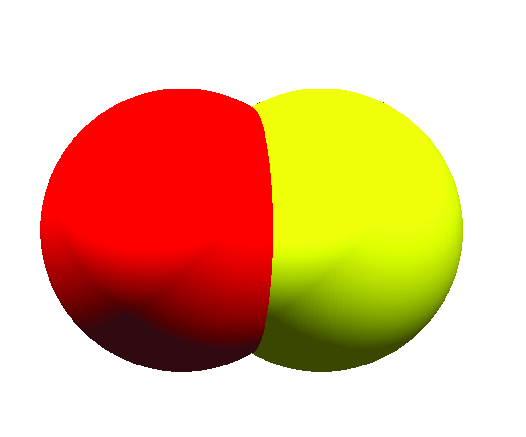} 
\includegraphics[angle=-90,width=0.3\textwidth]{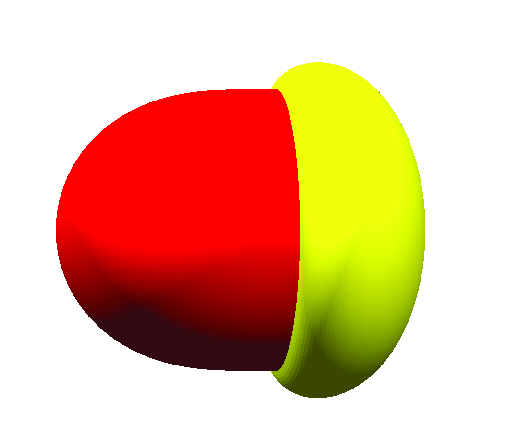} 
\caption{($C^0$ with phase area and volume conservation, 
$\spont_1 = -0.5$, $\spont_2 = -4$)
Plots at times $t=0,1$.
}
\label{fig:test3c3}
\end{figure}%

In Figure~\ref{fig:new1cons3} we show a simulation for a flat disc separated
into two phases, where phase 2 has two connected components. We note that the
model and theory presented in this paper, for simplicity, only considered the
case of a single junction being present. But it is a straightforward matter to
extend the ideas, and the approximations, to more than one junction. Clearly,
in the example in Figure~\ref{fig:new1cons3} two junctions are present.
We let $\ttau=10^{-4}$ and $(J_1,J_2) = (47,84)$.
\begin{figure}
\center
\mbox{
\includegraphics[angle=-90,width=0.25\textwidth]{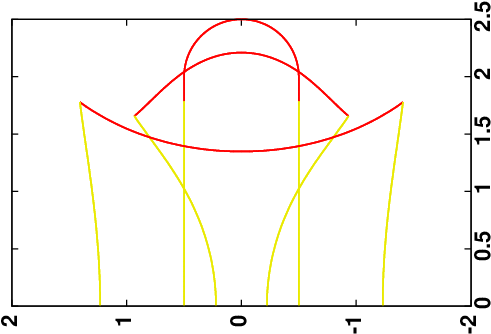} 
\includegraphics[angle=-90,width=0.25\textwidth]{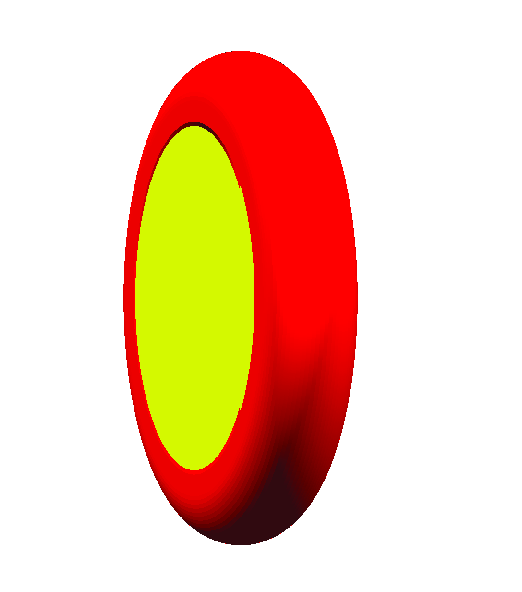} 
\includegraphics[angle=-90,width=0.25\textwidth]{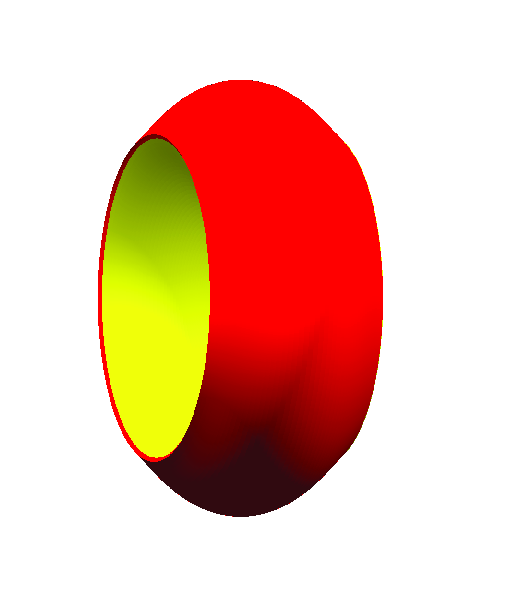} 
\includegraphics[angle=-90,width=0.25\textwidth]{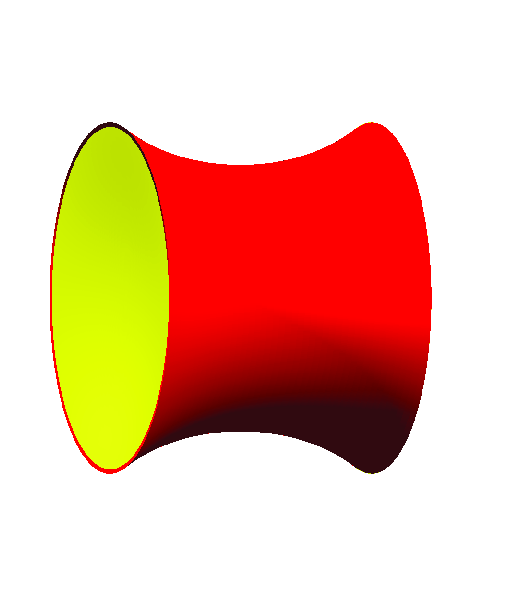} 
}
\caption{($C^0$ with phase area and volume conservation)
Plots at times $t=0,0.1,1$.
}
\label{fig:new1cons3}
\end{figure}%

\subsection{$C^1$--junctions}

We begin with a study of the tangential motion at the junction, recall
Remark~\ref{rem:equid}. 
To this end, we compare the results from our
scheme \eqref{eq:fd} to the ones from an alternative fully discrete
approximation that is based on \eqref{eq:sideh} in place of 
\eqref{eq:sidehtang}. For the experiments in Figure~\ref{fig:tm_test}
we start with each phase represented by a quarter of a unit circle.
As discretisation parameters we use $\ttau=10^{-4}$ and
$(J_1,J_2) = (65,9)$, so that the upper phase is much finer discretised than
the lower phase. On the continuous level, the initial data is a steady state
solution. However, the scheme based on \eqref{eq:sideh} induces a tangential
motion of the junction point that is based purely on the discretisation. As a
side effect, the whole surface moves up, which is not physical.
In contrast, the evolution for our scheme \eqref{eq:fd} is nearly stationary.
We note that the condition \eqref{eq:Xhq1q0} leads to some change at the lower 
boundary, and we observe a small tangential motion of the junction point.
\begin{figure}
\center
\includegraphics[angle=-90,width=0.2\textwidth]{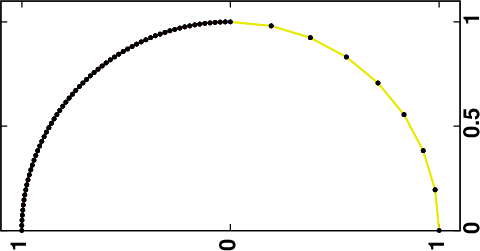} \qquad
\includegraphics[angle=-90,width=0.2\textwidth]{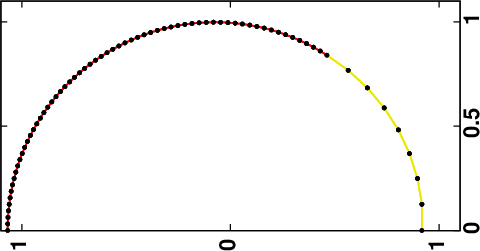}\qquad
\includegraphics[angle=-90,width=0.2\textwidth]{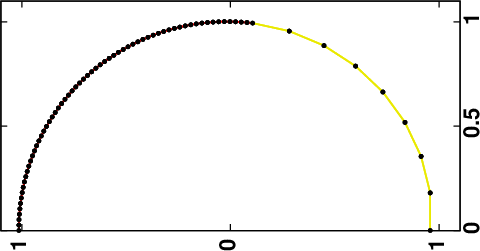} 
\caption{($C^1$)
The plots show the initial data (left), the solution of the scheme based on 
\eqref{eq:sideh} at time $t=1$ (middle), 
and the solution from \eqref{eq:fd} at time $t=1$ (right).
}
\label{fig:tm_test}
\end{figure}%

As another comparison, which highlights the rather subtle effects of changing
\eqref{eq:sidehtang} to \eqref{eq:sideh}, we repeat the experiment in
Figure~\ref{fig:test3c3}, but now for a $C^1$--junction with only
phase area preservation. 
As the discretisation parameters we use $J_1=J_2=65$ and $\ttau=10^{-4}$.
While our scheme \eqref{eq:fd} shows a monotonically decreasing discrete 
energy, see Figure~\ref{fig:c1test3c1}, the fully
discrete approximation based on \eqref{eq:sideh} exhibits a highly oscillatory
energy plot, and some non-trivial tangential motion at the junction point that
leads to rather large elements near the junction.
\begin{figure}
\center
\mbox{
\includegraphics[angle=-90,width=0.2\textwidth]{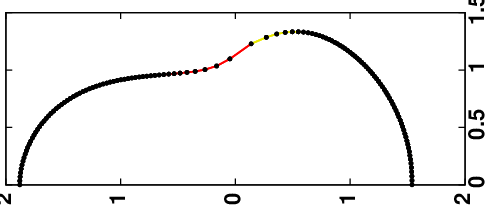} 
\includegraphics[angle=-90,width=0.3\textwidth]{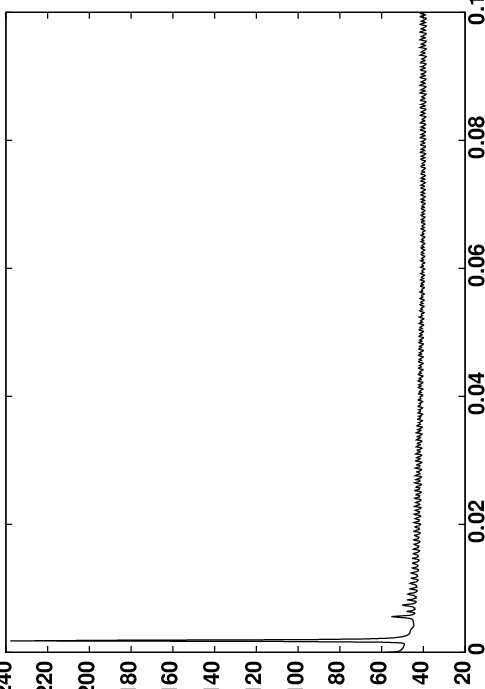} 
\includegraphics[angle=-90,width=0.2\textwidth]{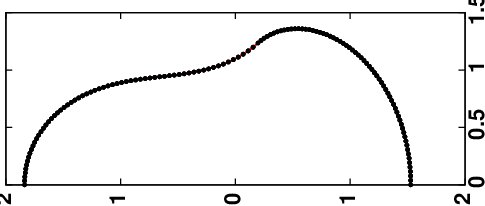} 
\includegraphics[angle=-90,width=0.3\textwidth]{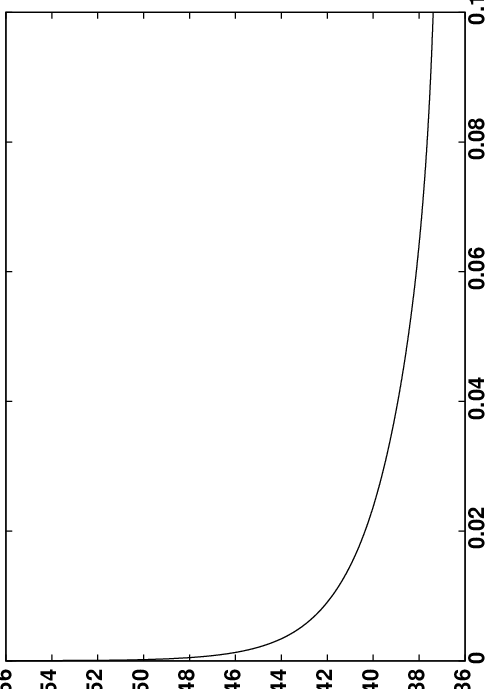} 
}
\caption{($C^1$ with phase area conservation, 
$\spont_1 = -0.5$, $\spont_2 = -4$) 
On the left we show the solution at time $t=0.1$, and a plot of the
discrete energy, for a fully discrete approximation based on \eqref{eq:sideh}.
On the right we display the same for our scheme \eqref{eq:fd}.
}
\label{fig:c1test3c1}
\end{figure}%
This in turn leads to bad curvature approximations at the junction. We
visualise this in Figure~\ref{fig:c1kappajump}, where for the final solution of
both schemes we
plot the approximations $\doctorkappa^h_i(\vec X^M_i, \kappa^M_i)$
of $\varkappa_{\mathcal{S}_i}$, $i=1,2$, against arclength.
Clearly, the curvature approximations from the scheme based on \eqref{eq:sideh}
are completely unphysical. The discretisations from our scheme \eqref{eq:fd},
on the other hand , 
approximately satisfy \eqref{eq:axiC1bc1} and \eqref{eq:axiC1bc2}, which yield
$\varkappa_{\mathcal{S}_1} - \varkappa_{\mathcal{S}_2} = 3.5$ and
$(\varkappa_{\mathcal{S}_1})_s = (\varkappa_{\mathcal{S}_2})_s$, respectively,
for the continuous solution at the junction.
\begin{figure}
\center
\includegraphics[angle=-90,width=0.4\textwidth]{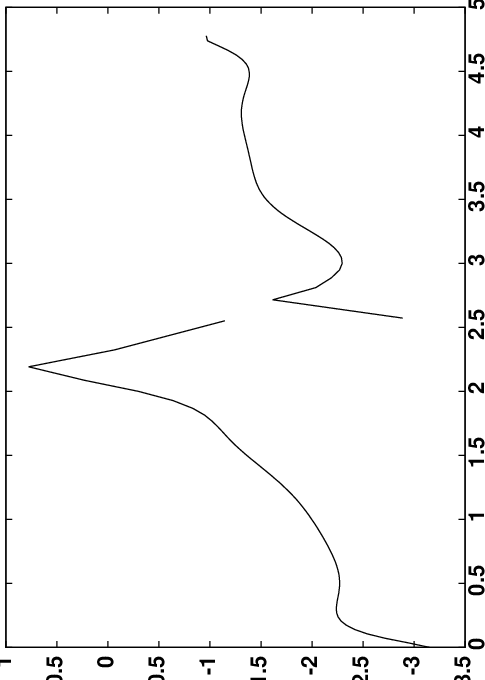}  \qquad
\includegraphics[angle=-90,width=0.4\textwidth]{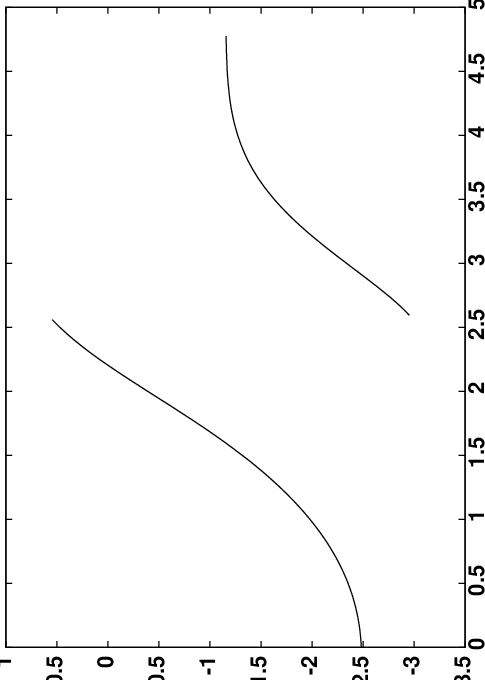} 
\caption{($C^1$ with phase area conservation, 
$\spont_1 = -0.5$, $\spont_2 = -4$) 
A plot of $\doctorkappa^h_i(\vec X^M_i, \kappa^M_i)$, $i=1,2$, against
arclength of $\overline{\Gamma^M_1} \cup \overline{\Gamma^M_2}$, for the two
experiments in Figure~\ref{fig:c1test3c1}.
}
\label{fig:c1kappajump}
\end{figure}%

Hence, from now on, we only consider simulations for the scheme \eqref{eq:fd}.
To begin, we perform a convergence experiment for the special case that
the two phases have identical physical properties, with
$\spont_1 = \spont_2 = \spont = -1$. 
Then a sphere of radius $R(t)$, where $R(t)$ satisfies
\begin{equation} \label{eq:ODE}
R'(t) = - \tfrac\spont{R(t)}\,(\tfrac2{R(t)} + \spont)\,,
\quad R(0) = 1\,,
\end{equation}
is a solution to (\ref{eq:gradflowlambda}) with
$\lambda_{A,1}=\lambda_{A,2}=\lambda_V = 0$.
The nonlinear ODE (\ref{eq:ODE}) is solved by 
$R(t) = z(t) - \tfrac2\spont$, where $z(t)$ is such that 
$\tfrac12\,(z^2(t) - z_0^2) - \tfrac4\spont\,(z(t)-z_0) + \tfrac4{\spont^2}\,
\ln \tfrac{z(t)}{z_0} + \spont^2\,t = 0$, with $z_0 = 1 + \tfrac2\spont$.
We use the solution to (\ref{eq:ODE}), with $\spont = -1$, and 
a sequence of approximations for the unit sphere to compute the error 
\[
\errorXx = \max_{m=1,\ldots,M} \max_{i=1,2} \max_{j=0,\ldots,J_i} \left| 
|\vec X^m_i(q_{i,j})| - R(t_m) \right|
\]
over the time interval $[0,T]$, for $T=1$, between
the true solution and the discrete solutions for the scheme \eqref{eq:fd}. 
This error only measures the accuracy of the normal motion of the interface,
accounting for the fact that the continuous problem has a whole family of
solutions, with the tangential motion essentially arbitrary. Nevertheless, 
in the absence of tangential energetic forcings, any numerical method should 
ensure that the phase boundary does not move tangentially during the evolution.
In order to measure this property, we also compute the quantity
$|\vec X^M(\tfrac12) - R(T)\,\vec\ek_1|$
for the solutions of the scheme \eqref{eq:fd}. 
As initial data we choose $\vec X^0 \in \xspace^h$ with
\begin{align*} 
\vec X^0_1(q_{1,j}) & = \begin{pmatrix} 
\cos[(\tfrac12 - q_{1,j})\,\pi + 0.1\,\cos((\tfrac12 - 2\,q_{1,j})\,\pi)] \\
\sin[(\tfrac12 - q_{1,j})\,\pi + 0.1\,\cos((\tfrac12 - 2\,q_{1,j})\,\pi)]
\end{pmatrix}, \quad j = 0,\ldots,J_1\,,\nonumber \\
\vec X^0_2(q_{2,j}) & = \begin{pmatrix} 
\cos[(\tfrac12 - q_{2,j})\,\pi + 0.1\,\cos((\tfrac12 - 2\,q_{2,j})\,\pi)] \\
\sin[(\tfrac12 - q_{2,j})\,\pi + 0.1\,\cos((\tfrac12 - 2\,q_{2,j})\,\pi)]
\end{pmatrix}, \quad j = 0,\ldots,J_2\,,
\end{align*}
recall (\ref{eq:Jequi}), which ensures that the evolutions for \eqref{eq:fd}  
will exhibit some tangential motion within each phase.
We use the time step size $\ttau= 10^{-3}\,h^2_{\Gamma^0}$,
where $h_{\Gamma^0}$ is the maximal edge length of $\Gamma^0 = 
(\Gamma^0_1,\Gamma^0_2)$, and report the computed errors in 
Table~\ref{tab:spont-1adapt2}. The reported errors appear to indicate
an at least linear convergence rate for the two error quantities.
We remark that the final element ratios
\begin{equation*} 
r^M_i = \dfrac{\max_{j=1,\ldots, J_i} 
|\vec X^M_i(q_{i,j}) - \vec X^M_i(q_{i,j-1})|}
{\min_{j=1,\ldots,J_i} |\vec X^M_i(q_{i,j}) - \vec X^M_i(q_{i,j-1})|}\,,\
i=1,2\,,
\end{equation*}
have the value $1$ for each of the runs displayed in 
Table~\ref{tab:spont-1adapt2}. Of course, this is to be expected from the 
equidistribution results in Remark~\ref{rem:equid}.
\begin{table}
\center
\begin{tabular}{|cr|c|c|c|c|}
\hline
$(J_1-1,J_2-1)$ & $h_{\Gamma^0}$ & $\errorXx$ & EOC 
& $|\vec X^M(\tfrac12) - R(T)\,\vec\ek_1|$ & EOC
\\ \hline
(16,8)   & 2.3408e-01 & 4.4399e-02 & ---  & 3.9101e-02 & ---  \\  
(32,16)  & 1.1762e-01 & 1.3277e-02 & 1.75 & 1.8489e-02 & 1.09 \\  
(64,32)  & 5.8881e-02 & 3.8599e-03 & 1.79 & 9.1529e-03 & 1.02 \\  
(128,64) & 2.9449e-02 & 1.0863e-03 & 1.83 & 4.5772e-03 & 1.00 \\ 
(256,128)& 1.4726e-02 & 3.8711e-04 & 1.49 & 2.2921e-03 & 1.00 \\ 
\hline
\end{tabular}
\caption{Errors for the convergence test with $\spont_1 = \spont_2 = -1$
for the scheme $(\BGNpwf^m)^h$.}
\label{tab:spont-1adapt2}
\end{table}%

In the next experiments we approximate well-known equilibrium shapes from
\citet[Fig.\ 8]{JulicherL96}, see also the experiments in 
\citet[Fig.~7.21]{pwfc0c1}.
To this end, we consider the volume and phase area conserving flow for 
initial surfaces with reduced volumes
$v_r \in \{0.95,\, 0.91,\, 0.9,\ 0.885,\ 0.84,\ 0.8\}$,
where
\[
v_r = \frac{3\,V(\vec X^0)}
{4\,\pi\,(\frac{A_1(\vec X^0) + A_2(\vec X^0)}{4\,\pi})^\frac32}
= \frac{6\,\pi^\frac12\,V(\vec X^0)}
{(A_1(\vec X^0) + A_2(\vec X^0))^\frac32}\,.
\]
In addition, the surface areas are fixed so that 
$A_1(\vec X^0) + A_2(\vec X^0) = 4\,\pi$ and so that
the two phases have a surface area ratio of 
$\frac{A_1(\vec X^0)}{A_1(\vec X^0) + A_2(\vec X^0)} = 0.1$.
See Figure~\ref{fig:c1ESinit} for the initial shapes,
where the spatial discretisation parameters are given by
$(J_1,J_2) = (93,421)$, $(91,423)$, $(90,424)$, $(92,422)$, $(95,419)$
and $(97,417)$, respectively.
For these experiments we set $\varsigma = 9$.
Choosing a time step size of $\ttau = 10^{-5}$, we integrate the volume and
phase area conserving flow until the discrete energy becomes stationary,
and we report on the obtained shapes in Figure~\ref{fig:c1ES}. 
These configurations appear to
agree well with the computed shapes in \citet[Fig.\ 8]{JulicherL96}.
\begin{figure}
\center
\mbox{
\includegraphics[angle=-90,width=0.16\textwidth]{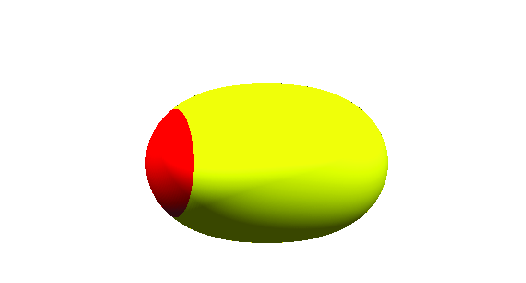} 
\includegraphics[angle=-90,width=0.16\textwidth]{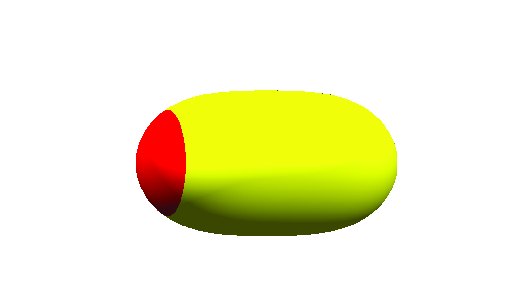} 
\includegraphics[angle=-90,width=0.16\textwidth]{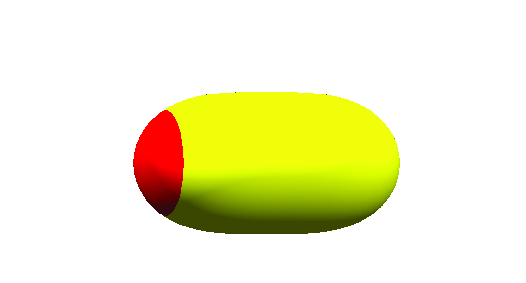} 
\includegraphics[angle=-90,width=0.16\textwidth]{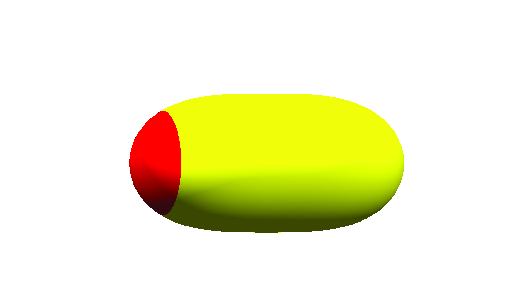} 
\includegraphics[angle=-90,width=0.16\textwidth]{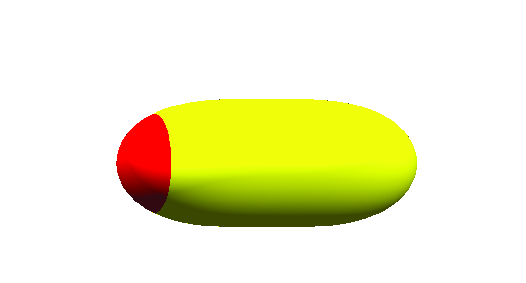} 
\includegraphics[angle=-90,width=0.16\textwidth]{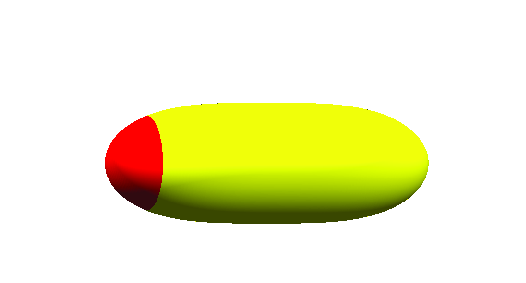} 
}
\caption{
The initial shapes for $v_r = 0.95$, $0.91$, $0.9$, $0.885$, $0.84$ and $0.8$, 
respectively.
}
\label{fig:c1ESinit}
\end{figure}%
\begin{figure}
\center
\mbox{
\includegraphics[angle=-90,width=0.15\textwidth]{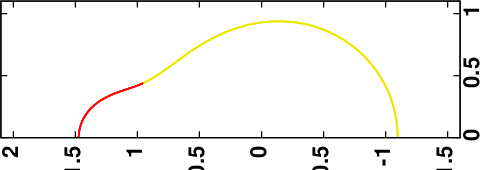} 
\includegraphics[angle=-90,width=0.15\textwidth]{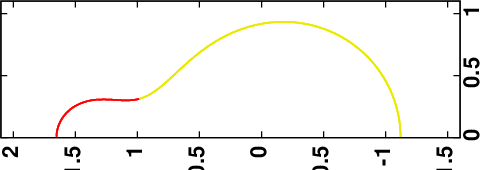} 
\includegraphics[angle=-90,width=0.15\textwidth]{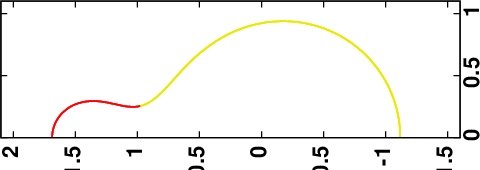} 
\includegraphics[angle=-90,width=0.15\textwidth]{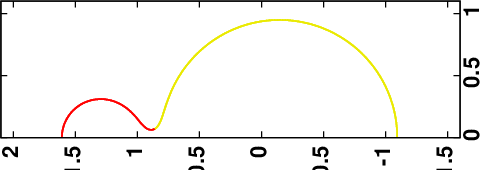} 
\includegraphics[angle=-90,width=0.15\textwidth]{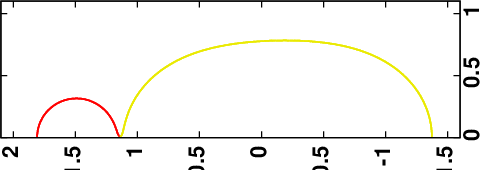} 
\includegraphics[angle=-90,width=0.15\textwidth]{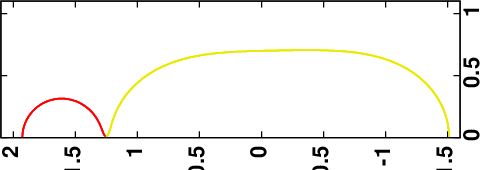} 
}
\mbox{
\includegraphics[angle=-90,width=0.15\textwidth]{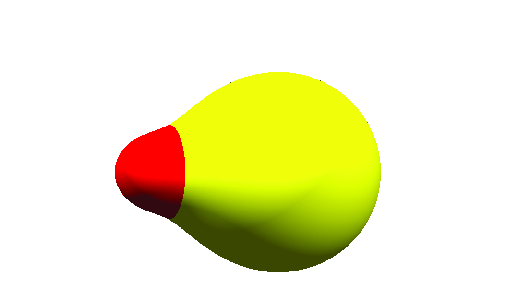} 
\includegraphics[angle=-90,width=0.15\textwidth]{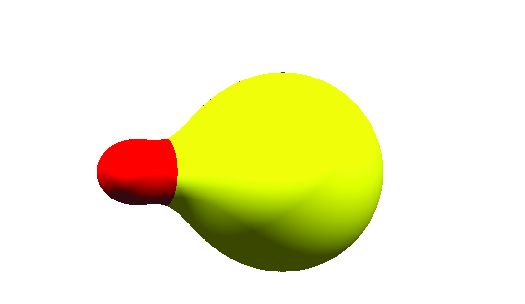} 
\includegraphics[angle=-90,width=0.15\textwidth]{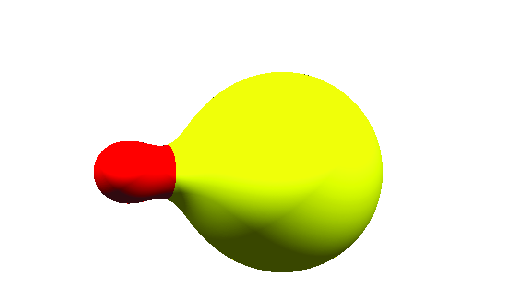} 
\includegraphics[angle=-90,width=0.15\textwidth]{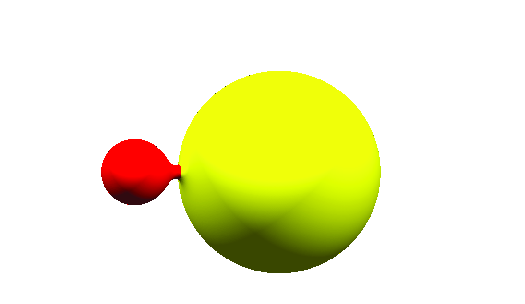} 
\includegraphics[angle=-90,width=0.15\textwidth]{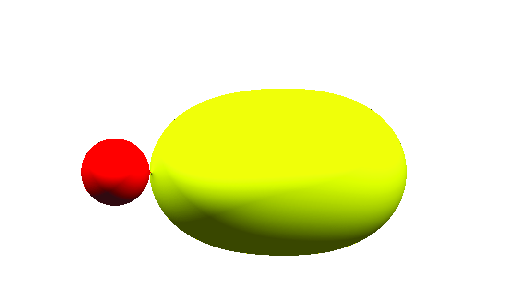} 
\includegraphics[angle=-90,width=0.15\textwidth]{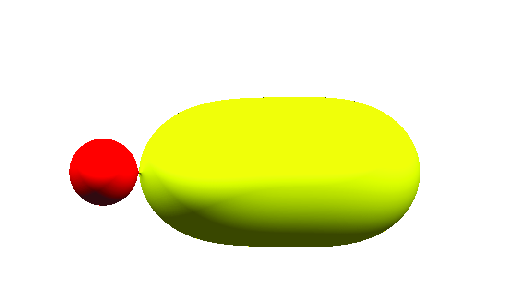} 
}
\caption{($C^1$ with phase area and volume conservation, $\varsigma=9$)
Approximations of the equilibrium shapes 
for $v_r = 0.95$, $0.91$, $0.9$, $0.885$, $0.84$ and $0.8$, 
respectively. 
}
\label{fig:c1ES}
\end{figure}%

Next 
we vary the Gaussian bending rigidity $\alpha^G_1$ for the equilibrium shape in
Figure~\ref{fig:c1ES} with $v_r=0.9$, and report on the new equilibrium shapes
in Figure~\ref{fig:c1ES09alpha}.
It can clearly be observed, that the interface between the two phases
moves away from the neck position, if $|\alpha^G_1|$ increases.
This can be explained with the help of the axisymmetric formulation of
the Gaussian curvature contribution in the energy.
In fact, in the $C^1$--case, when $\vec\mu_2(\tfrac12)=-\vec\mu_1(\tfrac12)$, 
we obtain, compare (\ref{eq:Ec0c1}),
\[
2\,\pi\,(\alpha^G_1-\alpha^G_2)\, \vec\mu_2(\tfrac 12)\,.\,\vec \ek_1
\]
as the Gaussian curvature contribution. This implies that the first
component of $\vec\mu_2(\frac 12)$ prefers to be positive if
$\alpha_1^G-\alpha_2^G<0$, and prefers to be negative if
$\alpha_1^G-\alpha_2^G>0$. We observe this behaviour in
Figure~\ref{fig:c1ES09alpha}, and in particular observe that phase 2 is
in the neck region if $\alpha^G_1$ is negative and phase 1 is in the
neck region if $\alpha^G_1$ is positive, compare also 
\citet[Fig.\ 5]{BaumgartDWJ05}. 
For the numerical results in Figure~\ref{fig:c1ES09alpha} we remark that the
condition \eqref{eq:alphaGbound} is only satisfied if $\alpha^G_1\in[-2,2]$.
Yet also for values outside this interval, our numerical method is able to
integrate the evolution, and the movement of the phase boundary becomes ever
more pronounced.
In addition, we show some equilibrium shapes for $\alpha^G_1\in[-2,2]$ when
the surface has a reduced volume of $v_r=0.885$. In this case, we observe
an induced pinch-off for $\alpha^G_1=2$, see Figure~\ref{fig:c1ES0885alpha}.
\begin{figure}
\center
\includegraphics[angle=-90,width=0.2\textwidth]{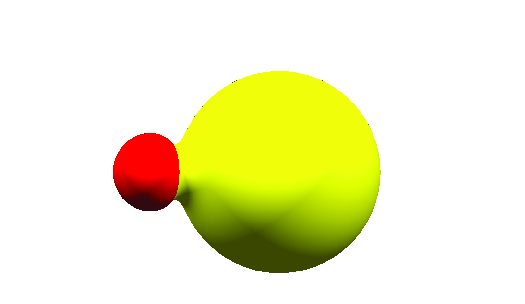} 
\includegraphics[angle=-90,width=0.1\textwidth]{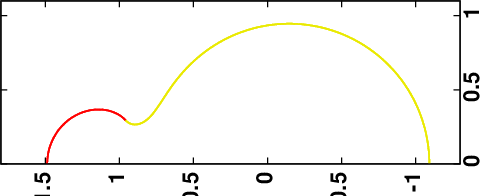} 
\includegraphics[angle=-90,width=0.1\textwidth]{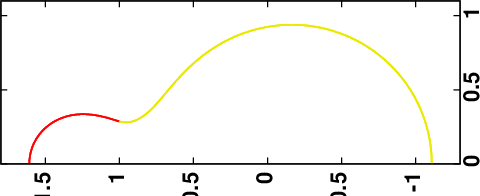} 
\includegraphics[angle=-90,width=0.1\textwidth]{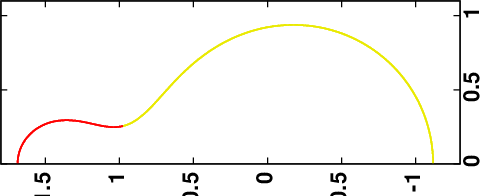} 
\includegraphics[angle=-90,width=0.1\textwidth]{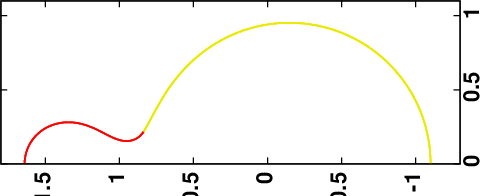} 
\includegraphics[angle=-90,width=0.1\textwidth]{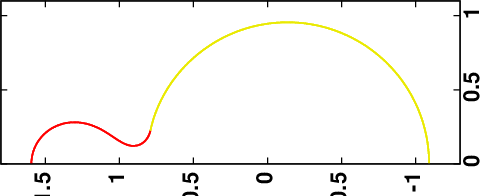} 
\quad
\includegraphics[angle=-90,width=0.2\textwidth]{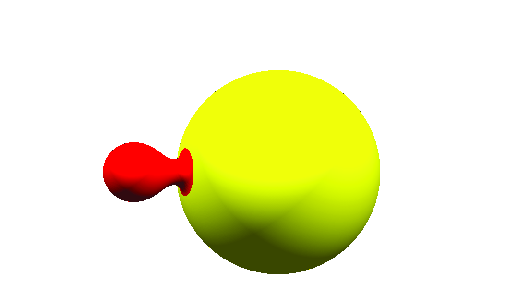} 

\caption{($C^1$ with phase area and volume conservation, $\varsigma=9$)
Approximations of the equilibrium shapes for $v_r=0.9$,
when $\alpha_1^G = -8,-2,0,2,8$. Apart from the cuts, we also show the surfaces
for $\alpha_1^G=-8$ (left) and for $\alpha_1^G=8$ (right).
}
\label{fig:c1ES09alpha}
\end{figure}%
\begin{figure}
\center
\includegraphics[angle=-90,width=0.2\textwidth]{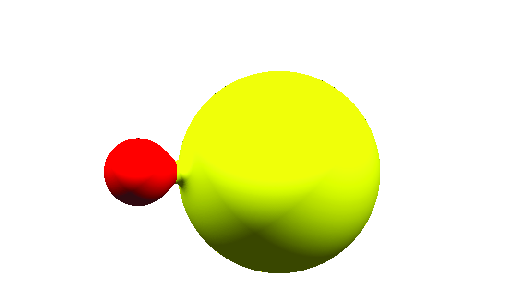} 
\includegraphics[angle=-90,width=0.16\textwidth]{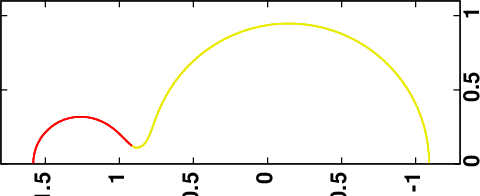} 
\includegraphics[angle=-90,width=0.16\textwidth]{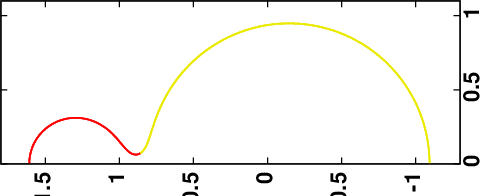} 
\includegraphics[angle=-90,width=0.16\textwidth]{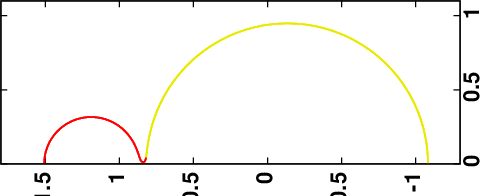} 
\includegraphics[angle=-90,width=0.2\textwidth]{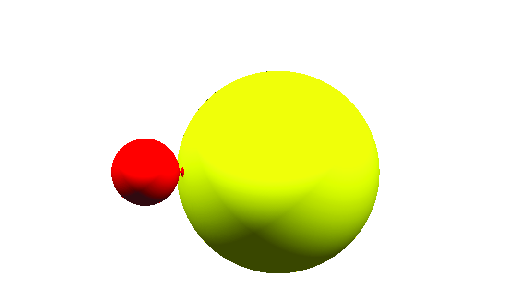} 

\caption{($C^1$ with phase area and volume conservation, $\varsigma=9$)
Approximations of the equilibrium shapes for $v_r=0.885$, when
$\alpha_1^G = -2, 0, 2$. Apart from the cuts, we also show the surfaces
for $\alpha_1^G=-2$ (left) and for $\alpha_1^G=2$ (right).
Note that for $\alpha_1^G=2$ the gradient flow encounters pinch-off.
}
\label{fig:c1ES0885alpha}  
\end{figure}%

In the next set of numerical results, 
we consider the case that one of the phases has two
connected components. These results are inspired by the vesicle shapes found in
experiments. First we consider a surface with reduced volume
$v_r = 0.956$, total surface area $A_1 + A_2 = 4\,\pi$ and
with a phase area ratio of $A_1 / (A_1+A_2) = 0.46$.
Our numerical results in Figure~\ref{fig:wangdu08_fig4_3}
show some resemblance with \citet[Fig.\ 1d]{BaumgartHW03}, see also
\citet[Fig.\ 4]{WangD08}.
\begin{figure}
\center
\mbox{
\includegraphics[angle=-90,width=0.3\textwidth]{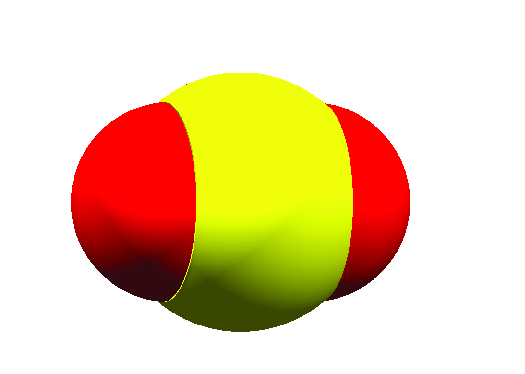} \quad
\includegraphics[angle=-90,width=0.2\textwidth]{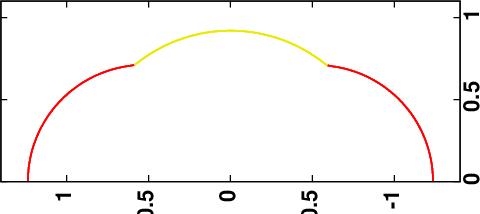} 
\includegraphics[angle=-90,width=0.2\textwidth]{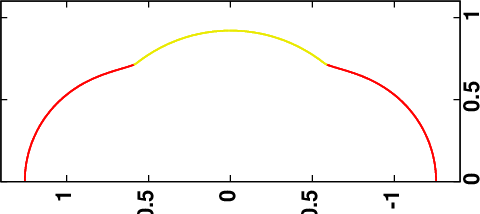} 
\includegraphics[angle=-90,width=0.2\textwidth]{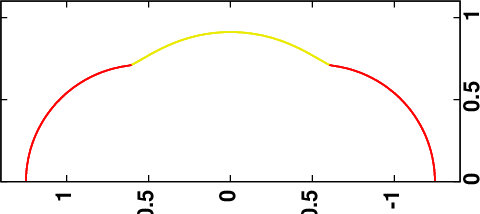} 
}
\caption{($C^1$ with phase area and volume conservation, 
 $\varsigma=50$)
Approximations of the equilibrium shapes for $v_r= 0.956$.
The surface for $(\alpha_1,\alpha_2) = (0.01,0.01)$, as well as the cuts
for $(\alpha_1,\alpha_2) = (0.01,0.01)$ (left), 
$(\alpha_1,\alpha_2) = (1,0.01)$ (middle) and 
$(\alpha_1,\alpha_2) = (0.01,1)$ (right).
}
\label{fig:wangdu08_fig4_3}
\end{figure}%
Next we consider the shape in \citet[Fig.\ 2f]{BaumgartHW03}, 
see also the final simulated surface in \citet[Fig.\ 3]{WangD08}.
We consider a surface with reduced volume
$v_r = 0.8$, total surface area $A_1 + A_2 = 4\,\pi$ and
with a phase area ratio of $A_1 / (A_1+A_2) = 0.09$.
Our numerical results are shown in Figure~\ref{fig:wangdu08_fig3_redvol08}
and the results resemble the situation in the neck region of the
experiments of \citet[Fig.\ 2f]{BaumgartHW03}.
\begin{figure}
\center
\mbox{
\includegraphics[angle=-90,width=0.2\textwidth]{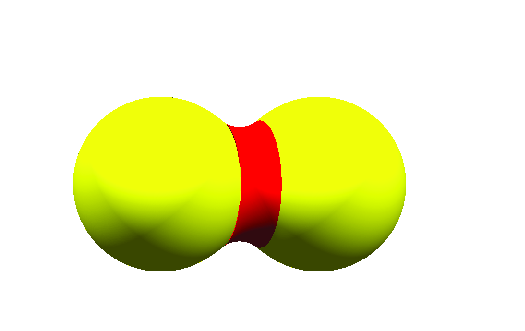} 
\includegraphics[angle=-90,width=0.1\textwidth]{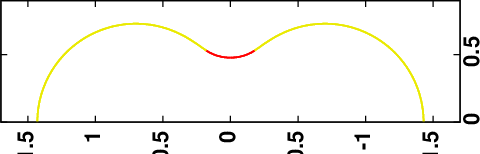} 
\includegraphics[angle=-90,width=0.1\textwidth]{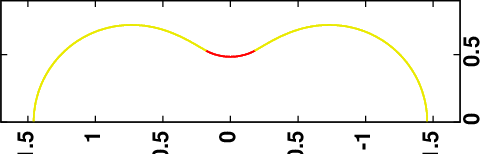} 
\includegraphics[angle=-90,width=0.1\textwidth]{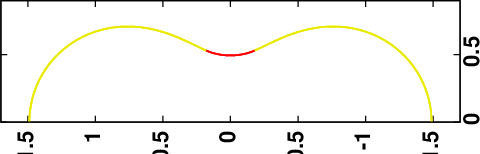} 
\includegraphics[angle=-90,width=0.1\textwidth]{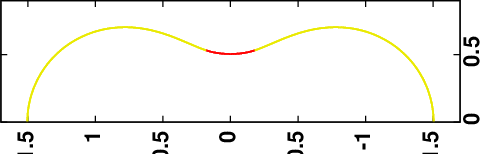} 
\includegraphics[angle=-90,width=0.1\textwidth]{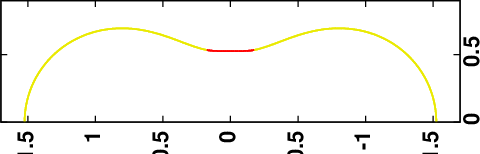} 
\includegraphics[angle=-90,width=0.2\textwidth]{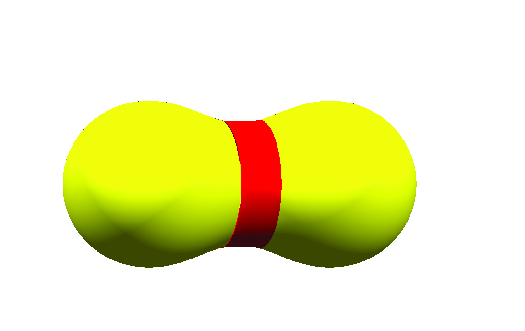} 
}
\mbox{
\includegraphics[angle=-90,width=0.2\textwidth]{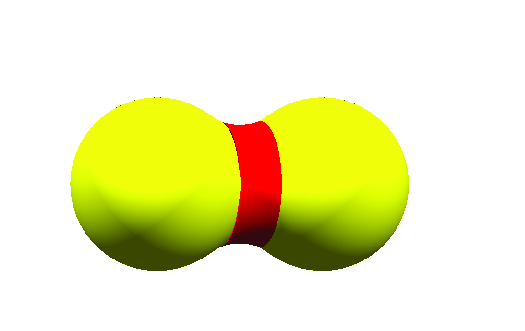} 
\includegraphics[angle=-90,width=0.1\textwidth]{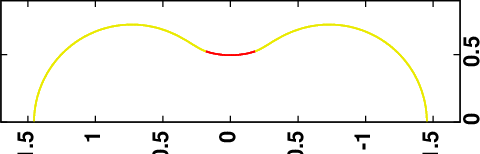} 
\includegraphics[angle=-90,width=0.1\textwidth]{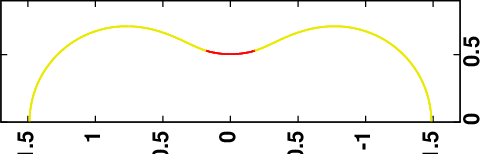} 
\includegraphics[angle=-90,width=0.1\textwidth]{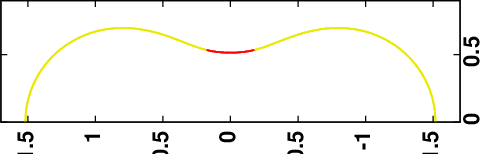} 
\includegraphics[angle=-90,width=0.1\textwidth]{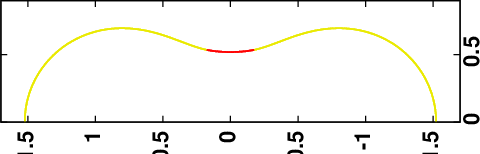} 
\includegraphics[angle=-90,width=0.1\textwidth]{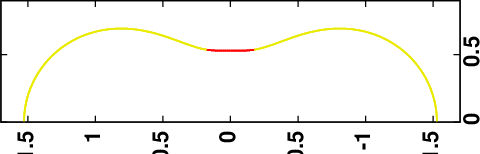} 
\includegraphics[angle=-90,width=0.2\textwidth]{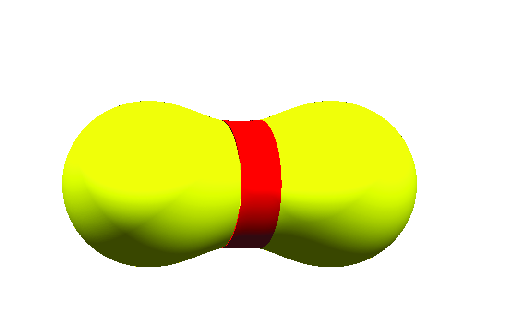} 
}
\caption{($C^1$ with phase area and volume conservation, 
$\varsigma=9$)
Approximations of the equilibrium shapes for $v_r= 0.8$, when
$(\alpha_1,\alpha_2) = (1,0.1)$, $(1,0.5)$, $(1,1)$, $(0.5,1)$, $(0.1,1)$
and $\spont_1=2$ (top), as well as for $\spont_1=0$ (bottom).
On the sides we show the surfaces for $(\alpha_1,\alpha_2) = (1,0.1)$ (left)
and $(\alpha_1,\alpha_2) = (0.1,1)$ (right).
}
\label{fig:wangdu08_fig3_redvol08}
\end{figure}%

\begin{appendix}
\renewcommand{\theequation}{A.\arabic{equation}}
\setcounter{equation}{0}
\section{Consistency of the weak formulations} \label{sec:A}

Starting from our weak formulations, \eqref{eq:weak3}
with \eqref{eq:weak3a} replaced by \eqref{eq:weak4LMa}, in this appendix
we derive the strong form for the $L^2$--gradient flow of (\ref{eq:areaEc0c1}),
together with the boundary conditions that need to hold on $\partial I_i$, 
for $i=1,2$. Here we will make extensive use of \citet[Appendix~A]{axipwf}, 
and for ease of exposition we will often suppress the dependence on time.
We begin by writing \eqref{eq:weak4LMa} as
\begin{equation*} 
2\,\pi\,\sum_{i=1}^2 \left((\vec x_i\,.\,\vec\ek_1)\,[\vec x_i]_t\,.\,\vec\nu_i,
\vec\chi_i\,.\,\vec\nu_i\,|[\vec x_i]_\rho|\right)
= \sum_{i=1}^2 D_i(\vec\chi)
\quad \forall\ \vec\chi \in \xspace\,,
\end{equation*}
where
\begin{align}
D_i(\vec\chi) & = 
 \left([\vec y_i]_\rho\,.\,\vec\nu_i, 
[\vec\chi_i]_\rho \,.\,\vec\nu_i \, |[\vec x_i]_\rho|^{-1}\right) 
+ \left( \vec f_i, \vec\chi_i\,|[\vec x_i]_\rho|\right)
- 2\,\pi\, \lambda_V \left((\vec x_i\,.\,\vec\ek_1)\,\vec\nu_i, 
\vec\chi_i\,|[\vec x_i]_\rho|\right)\nonumber \\ & \quad
- 2\,\pi\,\lambda_{A,i} \left[
\left( \vec\ek_1, \vec\chi_i\,|[\vec x_i]_\rho|\right) 
+ \left( (\vec x_i\,.\,\vec\ek_1)\,\vec\tau_i
, [\vec\chi_i]_\rho\right) \right], \ i=1,2\,.
\label{eq:Di}
\end{align}
On noting that the right hand side of \eqref{eq:Di} corresponds 
to the right hand side of \citet[(A.2)]{axipwf} for a single curve, 
we can apply the results from \citet[Appendix~A]{axipwf} to show that
the strong formulations for the flows in the interior are given by 
\eqref{eq:xtbgnlambda}, while the boundary conditions on
$\partial I_i \setminus \{\tfrac12\}$, for $i=1,2$, are \eqref{eq:part0I}.
Hence it only remains to derive the conditions that need to hold at the
junction, i.e.\ on $\{\tfrac12\}$. 
Collecting the contributions that arise from the boundary terms
$B_1,\ldots,B_5$ in \citet[Appendix~A.1]{axipwf} at the junction point 
for each of the two curves, 
which altogether arise from the first, second and last term on the right hand
side of \eqref{eq:Di}, 
we obtain that the weak formulation enforces
\begin{align} 
& \sum_{i=1}^2 
\left\{(-1)^{i+1}([\vec y_i]_s\,.\,\vec\nu_i)\,\vec\chi_i\,.\,\vec\nu_i
  -\pi\,\varsigma\, \vec\chi_i\,.\,\vec\ek_1 
-\pi\,(-1)^{i+1}\,\vec x_i\,.\,\vec\ek_1\,
(\alpha_i\,[\varkappa_{\mathcal{S}_i}-\spont_i]^2 +2\,\lambda_{A,i})\,
\vec\chi_i\,.\,\vec\tau_i
\right. \nonumber \\ & \qquad \left.
 -2\,\pi\,\alpha_i\,(-1)^{i+1}\,(\varkappa_{\mathcal{S}_i} -   \spont_i)\, 
 (\vec\tau_i\,.\,\vec\ek_1)\, \vec\chi_i\,.\,\vec\nu_i
 +(-1)^{i+1} \left[\varkappa_i\,\vec\chi_i\,.\,\vec y_i^\perp\right]\right\} 
= 0  \label{eq:bc1div2}
\end{align}
at the junction. 
We note from (\ref{eq:kappaid}) and (\ref{eq:meanGaussS}) that
\begin{equation} \label{eq:A1:kappay}
\vec y_i\,.\,\vec\nu_i = 
2\,\pi\,\vec x_i\,.\,\vec\ek_1\,
\alpha_i\,(\varkappa_{\mathcal{S}_i} - \spont_i) \,,
\quad \text{where}\quad
\varkappa_{\mathcal{S}_i} = \varkappa_i - 
\frac{\vec\nu_i\,.\,\vec\ek_1}{\vec x_i\,.\,\vec\ek_1}
\quad\text{in } \overline{I}_i\,,\ i=1,2\,.
\end{equation}
Moreover, we recall from \citet[(3.24)]{axipwf} that it can be shown that
\begin{equation} \label{eq:A1:324}
\varkappa_i\,\vec y_i^\perp + ([\vec y_i]_s\,.\,\vec\nu_i)\, \vec\nu_i =
\varkappa_i\,(\vec y_i\,.\,\vec\nu_i)\,\vec\tau_i + 
(\vec y_i\,.\,\vec\nu_i)_s\,\vec\nu_i
\quad\text{in } \overline I_i\,,\ i=1,2\,.
\end{equation}
It follows from \eqref{eq:A1:324} and \eqref{eq:A1:kappay} 
that we can combine the first and last term on the left hand 
side of (\ref{eq:bc1div2}) to give
\[
 2\,\pi\,\sum_{i=1}^2 (-1)^{i+1} \left\{
\varkappa_i\,\vec x_i\,.\,\vec\ek_1
\,\alpha_i\,(\varkappa_{\mathcal{S}_i} -\spont_i)\,\vec\chi_i\,.\,\vec\tau_i 
  + \alpha_i\, [\vec x_i\,.\,\vec\ek_1\,(\varkappa_{\mathcal{S}_i})_s
    +\vec\tau_i\,.\,\vec\ek_1\,(\varkappa_{\mathcal{S}_i}-\spont_i)]
\,\vec\chi_i\,.\,\vec\nu_i \right\} .
\]
Hence, on using the notations $\vec x = \vec x_1 = \vec x_2$ and
$\vec\chi = \vec\chi_1 = \vec\chi_2$ at the
point $\frac12$, and on recalling (\ref{eq:mu}), it follows from 
(\ref{eq:bc1div2}) 
that
\begin{align*}
& 2\,\pi\,\vec x\,.\,\vec\ek_1\,
\sum_{i=1}^2
\left[
(-1)^{i+1}\,\alpha_i\,(\varkappa_{\mathcal{S}_i})_s\,\vec\nu_i
-(\tfrac12\,\alpha_i\,(\varkappa_{\mathcal{S}_i}-\spont_i)^2+\lambda_{A,i})\,
\vec\mu_i 
+\alpha_i\,(\varkappa_{\mathcal{S}_i}-\spont_i)\,\varkappa_i\,\vec\mu_i 
\right. \nonumber \\ & \qquad \qquad \left.
-\tfrac12\,\frac{\varsigma}{\vec x\,.\,\vec\ek_1}
\,\vec\ek_1\right] . \,\vec\chi = 0\,.
\end{align*}
As $\vec\chi \in \xspace$ is arbitrary, we obtain from the above identity that
\begin{align} 
& \sum_{i=1}^2
\left[
(-1)^{i+1}\,\alpha_i\,(\varkappa_{\mathcal{S}_i})_s\,\vec\nu_i
-(\tfrac12\,\alpha_i\,(\varkappa_{\mathcal{S}_i}-\spont_i)^2+\lambda_{A,i}
-\alpha_i\,(\varkappa_{\mathcal{S}_i}-\spont_i)\,\varkappa_i)\,\vec\mu_i 
\right]
-\frac{\varsigma}{\vec x\,.\,\vec\ek_1}\,\vec\ek_1
\nonumber \\ & \qquad
= \vec 0 \quad\text{on } \overline I_1 \cap \overline I_2\,.
\label{eq:bc1div2b}
\end{align}
We first consider the case of a $C^0$--junction, i.e.\ $C_1 = 0$.
Then it follows from (\ref{eq:weak3d}) and \eqref{eq:A1:kappay} that
\begin{equation*}
\alpha_i\,(\varkappa_{\mathcal{S}_i} - \spont_i) = \alpha_i^G\,
\frac{\vec\nu_i\,.\,\vec\ek_1}{\vec x\,.\,\vec\ek_1}
\quad \text{on } \partial I_i \setminus \{0,1\}\,,\ i=1,2\,,
\end{equation*}
which is (\ref{eq:axiC0bc1}). Using this identity in (\ref{eq:bc1div2b}),
we obtain with the help of (\ref{eq:meanGaussS}) that
\begin{equation*}
\sum_{i=1}^2
\left[
(-1)^{i+1}\,\alpha_i\,(\varkappa_{\mathcal{S}_i})_s\,\vec\nu_i
-(\tfrac12\,\alpha_i\,(\varkappa_{\mathcal{S}_i}-\spont_i)^2+\lambda_{A,i}
+\alpha_i^G\,\Gauss_{\mathcal{S}_i})\,\vec\mu_i \right]
-\frac{\varsigma}{\vec x\,.\,\vec\ek_1}\,\vec\ek_1 
= \vec 0 
\,,
\end{equation*}
which is (\ref{eq:axiC0bc2}). This shows that the weak formulation
implies the boundary conditions at the junction in the $C^0$--case.

In the $C^1$--case, i.e.\ for $C_1=1$, we recall from 
Remark~\ref{rem:C0} that
\begin{equation}\label{eq:y1miny2}
\vec y_1-\vec y_2 = 2\,\pi\,[\alpha_1^G-\alpha_2^G]\,\vec\ek_1
\quad\text{on } \overline I_1 \cap \overline I_2\,,
\end{equation}
and that \eqref{eq:weak3cc1} holds. Applying integration by parts 
to the two second order terms in \eqref{eq:weak3cc1}, and
observing the fact that $\vec\eta_1(\tfrac12) = \vec\eta_2(\tfrac12)$
as $(\vec\eta_1,\vec\eta_2) \in \Vycotest$, yields
\begin{equation*}
  \frac{[\vec x_1]_\rho}{|[\vec x_1]_\rho|} 
- \frac{[\vec x_2]_\rho}{|[\vec x_2]_\rho|} = \vec 0
\quad\text{ on } \overline{I}_1 \cap \overline I_2\,,
\end{equation*}
which, on using (\ref{eq:tau}) and (\ref{eq:mu}), implies that
\begin{equation}\label{eq:numuid}
  \vec\nu:=\vec\nu_2 = \vec\nu_1\quad \text{ and } \quad 
  \vec\mu:=\vec\mu_2 = -\vec\mu_1
\quad\text{ on } \overline{I}_1 \cap \overline I_2\,.
\end{equation}
On combining (\ref{eq:y1miny2}) and (\ref{eq:weak3b}), which states
that $2\,\pi\,\alpha_i\,\vec x_i \,.\, \vec\ek_1\,
(\varkappa_{\mathcal{S}_i}-\spont_i) = \vec y_i\,.\,\vec\nu_i$
in $\overline{I}_i$, we obtain, on recalling the first definition in
\eqref{eq:numuid}, that
\begin{equation*}
  [\alpha_i\,(\varkappa_{\mathcal{S}_i} -\spont_i)]^2_1 - [\alpha_i^G]^2_1\,
  \frac{\vec\nu\,.\,\vec\ek_1}{\vec x\,.\,\vec\ek_1} = 0
\quad\text{ on } \overline{I}_1 \cap \overline I_2\,,
\end{equation*}
which is (\ref{eq:axiC1bc1}). Moreover, substituting (\ref{eq:numuid}) into 
(\ref{eq:bc1div2b}) gives
\[
\sum_{i=1}^2
(-1)^{i+1}
\left[ \alpha_i\,(\varkappa_{\mathcal{S}_i})_s\,\vec\nu
+(\tfrac12\,\alpha_i\,(\varkappa_{\mathcal{S}_i}-\spont_i)^2+\lambda_{A,i}
-\alpha_i\,(\varkappa_{\mathcal{S}_i}-\spont_i)\,\varkappa_i)\,\vec\mu
\right]
-\frac{\varsigma}{\vec x\,.\,\vec\ek_1}\,\vec\ek_1 = \vec 0 
\]
at the junction, and taking the inner products with $\vec\nu$ and $\vec\mu$
leads to
\begin{equation*}
-[\alpha_i\,(\varkappa_{\mathcal{S}_i})_s]^2_1 
- \varsigma\,\frac{\vec\nu\,.\,\vec\ek_1}{\vec x\,.\,\vec\ek_1} = 0
\quad\text{on } \overline I_1 \cap \overline I_2
\end{equation*}
and
\begin{equation*}
[-\tfrac12\,\alpha_i\,(\varkappa_{\mathcal{S}_i}-\spont_i)^2 +
\alpha_i\,(\varkappa_{\mathcal{S}_i}-\spont_i)\, \varkappa_i 
- \lambda_{A,i}]^2_1 -
 \varsigma\, \frac{\vec\mu\,.\,\vec\ek_1}{\vec x\,.\,\vec\ek_1} = 0
\quad\text{on } \overline I_1 \cap \overline I_2\,.
\end{equation*}
The last two equations coincide with (\ref{eq:axiC1bc2}) and
(\ref{eq:axiC1bc3}), respectively. Hence we have shown that in the 
$C^1$--case, the weak formulation implies the correct boundary conditions 
(\ref{eq:axiC1bc}).

\renewcommand{\theequation}{B.\arabic{equation}}
\setcounter{equation}{0}
\section{Some axisymmetric differential geometry} \label{sec:B}
In this appendix, we review some material on the geometry of surfaces
from Chapter~2 in the recent review article \cite{bgnreview}, 
and apply it to axisymmetric surfaces.

Let $\vec x : I \to \bR^2$, $I\subset \bR$, 
be a local parameterisation of the curve $\Gamma$,
with tangent $\vec\tau = |\vec x_\rho|^{-1}\,\vec x_\rho = \vec x_s$,
unit normal $\vec\nu$ 
and curvature vector $\varkappa\,\vec\nu = \vec\tau_s$,
where we have defined $\partial_s = |\vec{x}_\rho|^{-1}\,\partial_\rho$.
Let $\Gamma$ be
the generating curve of an axisymmetric surface $\mathcal{S}$ in $\bR^3$. 
Then $\vec y : I \times [0,2\,\pi) \to \bR^3$ is a local parameterisation of
$\mathcal{S}$, where
\begin{equation*} 
\vec y(\rho,\theta) = (\vec x(\rho)\,.\,\vec\ek_1\,\cos\theta, 
\vec x(\rho)\,.\,\vec\ek_2, \vec x(\rho)\,.\,\vec\ek_1\,\sin\theta)^T 
\,.
\end{equation*}
The tangent space of $\mathcal{S}$ at $\vec y(\rho,\theta)$ is spanned by
the two tangent vectors
\begin{align} \label{eq:yrho}
\vec y_\rho (\rho,\theta) & = |\vec x_\rho|\,
(\vec\tau\,.\,\vec\ek_1\,\cos\theta, 
\vec\tau\,.\,\vec\ek_2, \vec\tau\,.\,\vec\ek_1\,\sin\theta)^T
\,, \nonumber \\
\vec y_\theta (\rho,\theta) & = 
(-\vec x\,.\,\vec\ek_1\,\sin\theta, 0, \vec x\,.\,\vec\ek_1\,\cos\theta)^T \,,
\end{align}
and a unit normal vector can be defined via
\begin{equation} \label{eq:nuS}
\vec\nu_{\mathcal{S}}(\rho,\theta) = 
(\vec\nu\,.\,\vec\ek_1\,\cos\theta, \vec\nu\,.\,\vec\ek_2,
\vec\nu\,.\,\vec\ek_1\,\sin\theta )^T\,.
\end{equation}
It follows from \eqref{eq:yrho} and \citet[Remark~8]{bgnreview}
that the coefficients of the first fundamental form of $\mathcal{S}$
are given by
\begin{equation} \label{eq:vecys}
g_{\rho\rho} = |\vec y_\rho|^2 = |\vec x_\rho|^2\,,\quad 
g_{\theta\theta} = |\vec y_\theta|^2 = (\vec x\,.\,\vec\ek_1)^2\,,\quad
g_{\rho\theta} = g_{\theta\rho} = \vec y_\rho\,.\,\vec y_\theta = 0\,,
\end{equation}
with the square of the local area element on $\mathcal{S}$ given by
\begin{equation} \label{eq:localarea}
g = g_{\theta\theta}\,g_{\rho\rho}
= (\vec x\,.\,\vec\ek_1)^2\,|\vec x_\rho|^2
\quad\text{in } I \times [0,2\,\pi)\,.
\end{equation}
Moreover, it follows from (\ref{eq:vecys}) and \citet[Remark~8]{bgnreview}
that the surface gradient and the
surface divergence of smooth functions $f:\mathcal{S} \to \bR$,
$\vec f:\mathcal{S} \to \bR^3$ on $\mathcal{S}$ can be calculated as
\begin{alignat*}{2} 
(\nabS\,f)\circ \vec y & 
= |\vec y_\rho|^{-2}\,(f\circ \vec y)_\rho\,\vec y_\rho 
+ |\vec y_\theta|^{-2}\,(f\circ \vec y)_\theta\,\vec y_\theta && \\ &
= (f\circ \vec y)_s\,\vec y_s + (\vec x\,.\,\vec\ek_1)^{-2}\,
(f\circ  \vec y)_\theta\,\vec y_\theta
&& \quad\text{in } I \times [0,2\,\pi)\,, \\
(\nabS\,.\,\vec f) \circ \vec y & 
= (\vec f \circ \vec y)_s\,.\,\vec y_s + (\vec x\,.\,\vec\ek_1)^{-2}\,
(\vec f \circ \vec y)_\theta\,.\,\vec y_\theta 
&& \quad\text{in } I \times [0,2\,\pi)\,.
\end{alignat*}
Hence, on noting $((\vec x\,.\,\vec\ek_1)^{-1}\,\vec y_\theta)_s=\vec 0$ 
and $(\vec y_s)_\theta \,.\, \vec y_\theta = (\vec x\,.\,\vec\ek_1)\, 
\vec x_s\,.\,\vec\ek_1$, we obtain that
\begin{align*} 
(\Delta_{\mathcal S}\,f) \circ \vec y 
& = (\nabS\,.\,(\nabS\, f)) \circ \vec y \\ &
= (f\circ \vec y)_{ss} + 
\frac{\vec x_s\,.\,\vec\ek_1}{\vec x\,.\,\vec\ek_1}\,(f \circ \vec y)_s + 
(\vec x\,.\,\vec\ek_1)^{-2}\,(f\circ\vec y)_{\theta\theta}
&& \quad\text{in } I \times [0,2\,\pi)\,.
\end{align*}
For a radially symmetric function $f:\mathcal{S}\to\bR$, 
with $f(\vec y(\rho,\theta)) = f(\vec y(\rho,0))$ 
for all $(\rho,\theta) \in I \times [0,2\,\pi)$, it follows that
\begin{equation} \label{eq:LBSrad}
(\Delta_{\mathcal S}\,f) \circ \vec y 
= (\vec x\,.\,\vec\ek_1)^{-1}\,(\vec x\,.\,\vec\ek_1\,(f \circ \vec y)_s)_s
\quad\text{in } I \times [0,2\,\pi) \,.
\end{equation}

On recalling Definitions~10 and 11 in \cite{bgnreview}, we now compute
the principal curvatures of $\mathcal{S}$ as the eigenvalues of the Weingarten
map $W_{\vec p} (\vec{\mathfrak t}) = - \partial_{\vec{\mathfrak t}}\,
\vec\nu_{\mathcal{S}}$ at $\vec p = \vec y(\rho,\theta)$. Choosing for the
tangent vector $\vec{\mathfrak t}$ the first vector in \eqref{eq:yrho}, 
recalling \eqref{eq:nuS}, and noting that $\partial_{\vec y_\rho} 
= \partial_\rho$, we obtain
\begin{subequations} \label{eq:ev}
\begin{align}
W_{\vec p} (\vec y_\rho) = - \partial_{\vec y_\rho}\,\vec\nu_{\mathcal{S}} &
= - [\vec\nu_{\mathcal{S}}]_\rho = - |\vec x_\rho|\,[\vec\nu_{\mathcal{S}}]_s 
= - |\vec x_\rho| (
\vec\nu_s\,.\,\vec\ek_1\,\cos\theta,
\vec\nu_s\,.\,\vec\ek_2,
\vec\nu_s\,.\,\vec\ek_1\,\sin\theta)^T\nonumber \\ &
= |\vec x_\rho| \varkappa\,(
\vec\tau\,.\,\vec\ek_1\,\cos\theta,
\vec\tau\,.\,\vec\ek_2,
\vec\tau\,.\,\vec\ek_1\,\sin\theta)^T 
= \varkappa\,\vec y_\rho\,,
\label{eq:ev1}
\end{align}
where we have used that $\vec\nu_s = - \varkappa\,\vec\tau$, since
$\vec\nu_s\,.\,\vec\tau = - \varkappa$ and $\vec\nu_s\,.\,\vec\nu=0$.
Similarly, choosing for the tangent vector $\vec{\mathfrak t}$ the second 
vector in \eqref{eq:yrho}, and noting that $\partial_{\vec y_\theta} =
\partial_\theta$, yields
\begin{align}
W_{\vec p} (\vec y_\theta) &
= - [\vec\nu_{\mathcal{S}}]_\theta  
= - (-\vec\nu\,.\,\vec\ek_1\,\sin\theta, 0,
\vec\nu\,.\,\vec\ek_1\,\cos\theta )^T
= - \frac{\vec\nu\,.\,\vec\ek_1}{\vec x\,.\,\vec\ek_1}\,\vec y_\theta\,.
\label{eq:ev2}
\end{align}
\end{subequations}
Clearly, \eqref{eq:ev} implies that
the two eigenvalues of the Weingarten map are $\varkappa$ and
$- \frac{\vec\nu\,.\,\vec\ek_1}{\vec x\,.\,\vec\ek_1}$, which means that for
the mean and Gaussian curvatures of $\mathcal{S}$ we obtain the formulas
\begin{equation} \label{eq:appmeanGaussS}
\varkappa_{\mathcal{S}} = \varkappa - 
\frac{\vec\nu\,.\,\vec\ek_1}{\vec x\,.\,\vec\ek_1}
\quad\text{and}\quad
\Gauss_{\mathcal{S}} = -
\varkappa\,\frac{\vec\nu\,.\,\vec\ek_1}{\vec x\,.\,\vec\ek_1}
\quad\text{in }\ I \times [0,2\,\pi)\,.
\end{equation}
\end{appendix}

\section*{Acknowledgements}
The authors gratefully acknowledge the support 
of the Regensburger Universit\"atsstiftung Hans Vielberth.

We would like to dedicate this article to our colleague and dear friend 
John W.\ Barrett, who died much too early on 30 June 2019. This manuscript
marks the conclusion of a long and fruitful collaboration between the three of
us. The idea to apply our knowledge on equidistributing curve approximations 
from the series of papers \cite{triplej,triplejMC,fdfi} to the approximation 
of axisymmetric surfaces was one of John's, in the autumn of 2017. Since then
we have published papers with John on axisymmetric curvature flows,
\cite{aximcf}, axisymmetric surface diffusion and related fourth order flows,
\cite{axisd}, axisymmetric Willmore flow, \cite{axipwf}, as well as papers on
the closely related topic of curve evolutions in Riemannian manifolds,
\cite{hypbol,hypbolpwf}. But at the back of John's and our mind was always
to eventually apply these new ideas to the evolution of two-phase biomembranes,
in order to obtain a very efficient numerical method with which to compute 
possible minimisers of the energy introduced by \cite{JulicherL93,JulicherL96},
which can be used to explain the experimental findings of 
Baumgart, Hess and Webb in their seminal Nature paper \cite{BaumgartHW03}. 
Sadly, John could not join us on this final stage 
of the journey and see his original idea come to fruition.

We miss John every day. We miss our joint laughter, our excitement at
scientific breakthroughs and our discussions on football and politics. But
above all we miss John as a person 
and as a role model: we will miss 
his great sense of humour, his razor sharp intellect, his honesty,
his integrity, his passion and his loyalty.

\end{document}